\numberwithin{equation}{section}
\newcommand\qedsymbol{\hbox{$\Box$}}
\newcommand\qed{\relax\ifmmode\Box\else
  {\unskip\nobreak\hfil\penalty50\hskip1em\null\nobreak\hfil\qedsymbol
  \parfillskip=\z@\finalhyphendemerits=0\endgraf}\fi}
\newcommand\subqedsymbol{\hbox{$\triangledown$}}
\newcommand\subqed{\relax\ifmmode\triangledown\else
  {\unskip\nobreak\hfil\penalty50\hskip1em\null\nobreak\hfil\subqedsymbol
  \parfillskip=\z@\finalhyphendemerits=0\endgraf}\fi}
\newenvironment{proof}[1][{}]{\par\noindent Proof{#1}. }{\qed}
\newenvironment{subproof}[1][{}]{\par\noindent Proof{#1}. }{\subqed}
\newcommand{\hotimes}{{\,\hat{\otimes}\,}}
\newcommand{\bfzero}{{\mathbf 0}}
\newcommand{\Sh}{{\mathrm{Sh}}}
\newcommand{\Stub}{{\mathrm{Stub}}}
\newcommand{\Shift}{{\mathrm{Shift}}}
\newcommand{\bul}{{\bullet}}
\newcommand{\al}{{\alpha}}
\newcommand{\mb}{{\mathfrak{b}}}
\newcommand{\md}{{\mathfrak{d}}}
\newcommand{\ms}{{\mathfrak{s}}}
\newcommand{\bs}{{\mathbf{s}}}
\newcommand{\bsi}{{\mathbf{s}^{-1}\,}}
\newcommand{\Om}{{\Omega}}
\newcommand{\si}{{\sigma}}
\newcommand{\ga}{{\gamma}}
\newcommand{\vf}{{\varphi}}
\newcommand{\ve}{{\varepsilon}}
\newcommand{\pa}{{\partial}}
\newcommand{\cF}{{\cal F}}
\newcommand{\cZ}{{\mathcal{Z}}}
\newcommand{\cL}{{\cal L}}
\newcommand{\cM}{{\mathcal{M}}}
\newcommand{\cQ}{{\mathcal{Q}}}
\newcommand{\bbk}{{\Bbbk}}
\newcommand{\D}{{\Delta}}
\newcommand{\ti}[1]{{\tilde{#1}}}
\newcommand{\mMC}{\mathfrak{MC}}
\newcommand{\curv}{{\mathsf{curv}}}
\newcommand{\maps}{\,\colon\,}
\newcommand{\und}[1]{{\underline{#1}}}
\newcommand{\sLie}{{\mathfrak{S}}\mathsf{Lie}_{\infty}}
\newcommand{\tensor}{\otimes}
\newcommand{\xto}[1]{\xrightarrow{#1}}
\newcommand{\MC}{\mathrm{MC}}
\DeclareMathOperator{\id}{\mathrm{id}}
\newtheorem{thm}{Theorem}[section]
\newtheorem{lem}[thm]{Lemma}
\newtheorem{prop}[thm]{Proposition}
\newtheorem{claim}[thm]{Claim}
\newtheorem{pty}[thm]{Property}
\newtheorem{remark}[thm]{Remark}
\title{A Version of the Goldman-Millson Theorem for Filtered $L_{\infty}$-Algebras}
\author{Vasily A. Dolgushev and Christopher L. Rogers}
\date{}
\begin{document}

\maketitle

\begin{abstract} 
In this paper we consider $L_{\infty}$-algebras equipped with complete 
descending filtrations.  We prove that, under some mild conditions, 
an $L_{\infty}$ quasi-isomorphism $U: L \to \ti{L}$ induces 
a weak equivalence between the Deligne-Getzler-Hinich (DGH)
$\infty$-groupoids corresponding to $L$ and $\ti{L}$, 
respectively.  This paper may be considered as a modest addition to 
foundational paper \cite{Ezra-infty} by Ezra Getzler. 
\end{abstract}

\section{Introduction}
The Getzler-Hinich construction \cite{Ezra-infty},  \cite{Hinich:1997} 
assigns to every nilpotent $L_{\infty}$-algebra $L$ a Kan complex $\mMC_{\bul}(L)$. 
This Kan complex is a natural generalization of the Deligne groupoid 
\cite{Ezra}, \cite{GM} from dg Lie algebras to nilpotent $L_{\infty}$-algebras. 
Various versions of the Deligne-Getzler-Hinich (DGH) groupoid are used 
in deformation theory \cite{BGNT}, \cite{GM}, \cite{K}, \cite{Lurie-DAG-X}, \cite{Ye}, rational homotopy theory 
\cite{Berglund}, \cite{Lazarev}, \cite{LM}, and derived algebraic geometry 
\cite{Lurie-DAG-X}. In this paper we consider a version of the Kan complex  $\mMC_{\bul}(L)$ 
for $L_{\infty}$-algebras equipped with a complete descending filtration. 
We prove that, under some mild conditions, an $L_{\infty}$ quasi-isomorphism 
$U: L \to \ti{L}$ of such filtered $L_{\infty}$-algebras induces a weak equivalence 
of simplicial sets 
$$
\mMC_{\bul}(U) :  \mMC_{\bul}(L) \to \mMC_{\bul}(\ti{L}).
$$

Let $L$ be a cochain complex of $\bbk$-vector\footnote{In this paper, 
we assume that $\textrm{char}(\bbk) = 0$.} spaces. We recall that an $L_{\infty}$-structure 
$L$ is a degree $1$ coderivation $\cQ$ of the cocommutative coalgebra 
$$
\und{S}(\bsi L) : = \bsi L \oplus S^2(\bsi L) \oplus S^3(\bsi L) \oplus \dots
$$
satisfying the equation 
$$
\cQ^2 = 0\,,
$$
and the condition $\cQ(\bsi \, v)  = - \bsi\, \pa v $,
where $\pa$ is the differential on $L$  and $(\bsi L )^{\bul}= L^{\bul+1}$\,.

An $\infty$-morphism $U$ from an $L_{\infty}$-algebra $(L, \cQ)$ to an $L_{\infty}$-algebra $(\ti{L}, \ti{\cQ})$
is a homomorphism of the cocommutative coalgebras 
\begin{equation}
\label{int-U}
U : \und{S}(\bsi L) \to \und{S}(\bsi \ti{L})
\end{equation}
which intertwines the coderivations $\cQ$ and $\ti{\cQ}$:
$$
U\circ \cQ = \ti{\cQ} \circ U\,. 
$$
  
Let us also recall that   
every coderivation $\cQ$ of  $\und{S}(\bsi L)$ is uniquely determined 
by its composition
$$
\cQ' : = p_{\bsi L} \circ \cQ : \und{S}(\bsi L) \to \bsi L  
$$ 
with the canonical projection $p_{\bsi L} :  \und{S}(\bsi L) \to  \bsi L $\,. 
Similarly, every homomorphism \eqref{int-U} is uniquely determined by 
its composition 
$$
U' : = p_{\bsi \ti{L}} \circ U : \und{S}(\bsi L) \to \bsi \ti{L}\,.  
$$

The restriction of $U'$ to $\bsi L$ gives us a chain map 
\begin{equation}
\label{int-lin-term}
U' \big|_{\bsi L} : \bsi L \to \bsi \ti{L}
\end{equation}
which we call the {\it linear term} of the $\infty$-morphism $U$.  
We recall that an $\infty$-morphism $U$ is an $\infty$ {\it quasi-isomorphism}
if its linear term is a quasi-isomorphism of cochain complexes $\bsi L \to \bsi \ti{L}$\,.

In this paper, we deal with {\it filtered} $L_{\infty}$-algebras $L$, i.e. 
$L_{\infty}$-algebras $L$ equipped with a 
complete descending filtration 
\begin{equation}
\label{int-filtr-L}
L = \cF_{1}L \supset \cF_{2}L \supset  \cF_{3}L  \supset \cdots\,,
\end{equation}
\begin{equation}
\label{int-L-complete}
L =\varprojlim_{k} L/\cF_{k}L
\end{equation}
such that
\begin{equation}
\label{int-Q-filtr-OK}
\cQ' (\cF_{i_{1}} \bsi L \otimes \cF_{i_{2}} \bsi L \otimes \dots \otimes \cF_{i_{m}} \bsi L )
\subset \cF_{i_{1} + i_{2} + \cdots + i_{m}} \bsi  L \quad \forall ~~ m \ge 1.
\end{equation}
Furthermore, we assume that all $\infty$-morphisms $U$ in question are also 
compatible with the filtrations in the sense that 
\begin{equation}
\label{U-filtration-OK}
U' (\cF_{i_{1}} \bsi L \otimes \cF_{i_{2}} \bsi L \otimes \dots \otimes \cF_{i_{m}} \bsi L )
\subset \cF_{i_{1} + i_{2} + \cdots + i_{m}} \bsi  \ti{L} \quad \forall ~~ m \ge 1.
\end{equation}

Let us recall that a Maurer-Cartan (MC) element of $L$ is a degree $1$ element 
$\al \in L$ satisfying the equation
\begin{equation}
\label{int-MC}
\sum_{m=1}^{\infty} \frac{1}{m!} \cQ' \big( (\bsi \al)^m \big) = 0\,,
\end{equation}
where the infinite series in the left hand side makes sense due to 
conditions $L = \cF_1 L$, \eqref{int-L-complete}, and \eqref{int-Q-filtr-OK}.

We also recall that every $\infty$-morphism \eqref{int-U} of $L_{\infty}$-algebras 
gives us a map of sets 
$$
U_* : \MC(L) \to \MC(\ti{L})
$$
given by the formula
\begin{equation}
\label{int-U-star}
U_*(\al) : = \sum_{m=1}^{\infty} \frac{1}{m!} \bs\, U' \big( (\bsi \al)^m \big)\,,
\end{equation}
where $\MC(L)$ (resp. $\MC(\ti{L})$) denotes the set of MC elements of 
$L$ (resp. $\ti{L}$).

It is natural to view MC elements of $L$ as zero-cells of the simplicial set 
$\mMC_{\bul}(L)$ with 
\begin{equation}
\label{int-mMC}
\mMC_n(L) : = \MC(L \hotimes \Om_n)
\end{equation}
where $\Om_n$ the de Rham-Sullivan algebra of polynomial 
differential forms on the geometric simplex $\Delta^n$ with 
coefficients in $\bbk$ and 
$$
L \hotimes \Om_n  : = \varprojlim_{k} \big( (L/\cF_{k}L) \otimes \Om_n \big)\,.
$$
Due to \cite[Proposition 4.1]{EnhancedLie}, the simplicial set $\mMC_{\bul}(L)$
is a Kan complex (a.k.a. an $\infty$-groupoid).    

It is easy to see that the assignment $L  \mapsto \mMC_{\bul}(L)$ upgrades 
to a functor from the category of (filtered) $L_{\infty}$-algebras (with morphisms 
being $\infty$-morphisms) to the category of simplicial sets. 

The goal of this paper is to prove the following version\footnote{This theorem is already 
used in papers \cite{HAform} and \cite{Action}.} 
of the Goldman-Millson theorem:
\begin{thm}
\label{thm:main}
Let $L$ and $\ti{L}$ be filtered $L_{\infty}$-algebras and $U$ be an $\infty$-morphism
from $L$ to $\ti{L}$ compatible with the filtrations in the sense of \eqref{U-filtration-OK}.
If the linear term $\vf : L \to \ti{L}$ of $U$ gives us a quasi-isomorphism 
$$
\vf \big|_{\cF_m L} ~: ~\cF_m L ~ \to~  \cF_m \ti{L}
$$ 
for every $m \ge 1$ then
$$
\mMC_{\bul}(U) : \mMC_{\bul}(L) \to \mMC_{\bul}(\ti{L})
$$
is a weak equivalence of simplicial sets. 
\end{thm}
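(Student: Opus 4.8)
The plan is to use the filtration to break $U$ into a tower of one-step extensions by abelian (square-zero) $L_\infty$-algebras, where the assertion reduces to elementary homological algebra, and then to assemble the pieces via the homotopy theory of towers of fibrations.

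\textbf{Step 1 (Reduction to the tower).} First I would record that, since $L = \varprojlim_k L/\cF_k L$ and each $L/\cF_k L$ carries a \emph{finite} filtration, one has $L\hotimes\Om_n = \varprojlim_k\big((L/\cF_k L)\otimes\Om_n\big)$, and, because a Maurer--Cartan element is the solution of a system of filtration-continuous polynomial equations, this gives an identification of simplicial sets $\mMC_\bullet(L)=\varprojlim_k\mMC_\bullet(L/\cF_k L)$, natural in $L$; the same holds for $\ti L$, compatibly with $\mMC_\bullet(U)$. Each $L/\cF_k L$ is a nilpotent $L_\infty$-algebra by \eqref{int-Q-filtr-OK}, and the projections $L/\cF_{k+1}L \twoheadrightarrow L/\cF_k L$ are surjective (strict) $\infty$-morphisms, so by Getzler's fibration theorem \cite{Ezra-infty} (or the method proving \cite[Proposition 4.1]{EnhancedLie}) the induced maps $\mMC_\bullet(L/\cF_{k+1}L)\to\mMC_\bullet(L/\cF_k L)$ are Kan fibrations. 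Thus $\mMC_\bullet(L)$ and $\mMC_\bullet(\ti L)$ are limits of towers of Kan fibrations between Kan complexes, hence homotopy limits, and a levelwise weak equivalence of such towers induces a weak equivalence on limits (Bousfield--Kan). It therefore suffices to prove that the induced map $\mMC_\bullet(L/\cF_k L)\to\mMC_\bullet(\ti L/\cF_k\ti L)$ is a weak equivalence for every $k\ge1$.

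\textbf{Step 2 (Induction on $k$).} The case $k=1$ is trivial since $L/\cF_1 L=0$. For the inductive step I would fit the maps at levels $k+1$ and $k$ into the commuting square of Kan fibrations coming from the projections $L/\cF_{k+1}L\twoheadrightarrow L/\cF_k L$ and $\ti L/\cF_{k+1}\ti L\twoheadrightarrow\ti L/\cF_k\ti L$; the map at level $k$ is a weak equivalence by the inductive hypothesis. The kernel of the left projection is $\cF_k L/\cF_{k+1}L$, which by \eqref{int-Q-filtr-OK} carries only its linear differential (every bracket of arity $\ge2$ strictly raises filtration weight, hence vanishes on the top graded quotient), and the same estimate shows that twisting this abelian $L_\infty$-algebra by an arbitrary lift of a Maurer--Cartan element of $L/\cF_k L$ changes nothing; by \eqref{U-filtration-OK} the only surviving Taylor component of $U$ on this graded piece is $\vf$ itself. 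Hence every nonempty fiber of $\mMC_\bullet(L/\cF_{k+1}L)\to\mMC_\bullet(L/\cF_k L)$ is weakly equivalent to $\mMC_\bullet(\cF_k L/\cF_{k+1}L)$, and $U$ induces on fibers the map $\mMC_\bullet\big(\vf|_{\cF_k L/\cF_{k+1}L}\big)$. The hypothesis that $\vf$ is a quasi-isomorphism on every $\cF_m L$ implies, via the five lemma applied to $0\to\cF_{k+1}L\to\cF_k L\to\cF_k L/\cF_{k+1}L\to0$, that $\vf|_{\cF_k L/\cF_{k+1}L}$ is a quasi-isomorphism of cochain complexes, and since the homotopy groups of $\mMC_\bullet(V)$ of an abelian $L_\infty$-algebra $V$ are (shifts of) the cohomology groups of $V$, the fiber map is a weak equivalence. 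Finally, a map of Kan fibrations lying over a weak equivalence of bases and inducing a weak equivalence on every fiber is a weak equivalence on total spaces (a standard consequence of the long exact homotopy sequences and the gluing lemma), which completes the induction and hence the proof.

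\textbf{Main obstacle.} The delicate step is the fiber computation in Step 2: a set-theoretic lift of $\alpha\in\MC(L/\cF_k L)$ to $L/\cF_{k+1}L$ need not be Maurer--Cartan, so a priori the fiber over $\alpha$ is a \emph{curved}, twisted copy of $\mMC_\bullet(\cF_k L/\cF_{k+1}L)$, and one must moreover check that the emptiness of this fiber --- detected by an obstruction class in $H^2(\cF_k L/\cF_{k+1}L)$ --- is matched compatibly on the $L$- and $\ti L$-sides under the linear map $\vf$. Both points are resolved by \eqref{int-Q-filtr-OK} and \eqref{U-filtration-OK}: the filtration estimates force the twisting perturbation to act trivially on the top graded quotient and make the curvature/obstruction data strictly natural in $\vf$. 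Beyond this, the remaining work is the careful invocation of the fibration-theoretic inputs (Getzler's theorem, homotopy invariance of fibers over a connected base, and limits of towers of fibrations); the rest is bookkeeping with the filtrations.
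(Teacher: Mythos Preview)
Your approach is correct and shares the paper's overall scaffolding---reduction to the tower $\{L/\cF_kL\}$, induction on $k$ using Getzler's fibration theorem, the abelian computation on graded pieces, and the inverse-limit-of-fibrations argument (this last step is exactly the paper's Section~\ref{sec:the-end})---but it diverges from the paper in one substantial way. The paper devotes all of Section~\ref{sec:pi0-level} to proving the $\pi_0$-bijection \emph{directly} for $L$ and $\ti L$, by explicit inductive constructions of sequences of MC elements, rectified $1$-cells, careful horn-fillings (Proposition~\ref{prop:filling-horn}), and an ``infinite concatenation'' lemma (Appendix~\ref{app:con-ing}); this $\pi_0$ result is then fed into the induction of Section~\ref{sec:higher}, where the long exact sequence is truncated at the awkward non-group end via the auxiliary set $\mb(L,\cF_nL,\cF_{n+1}L)$. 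You instead absorb $\pi_0$ into a single ``weak equivalence on base and on every fiber $\Rightarrow$ weak equivalence on total space'' comparison, which is cleaner and shorter. What the paper's route buys is explicit: the authors remark (end of Section~\ref{sec:str-proof}) that the constructions in Section~\ref{sec:pi0-level} have independent value for producing concrete homotopies; your route trades those explicit formulas for brevity. The point you flag as the main obstacle---matching the (non)emptiness of fibers via the obstruction class in $H^{\bullet}(\cF_kL/\cF_{k+1}L)$ and identifying the induced map on nonempty fibers with $\mMC_\bullet(\vf|_{\cF_kL/\cF_{k+1}L})$---is genuinely the crux, and your resolution via the filtration estimates \eqref{int-Q-filtr-OK}, \eqref{U-filtration-OK} (so that on the top graded piece only the linear term of $U$ and the untwisted differential survive, making the obstruction strictly natural in $\vf$) is correct.
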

We would like to remark that Theorem \ref{thm:main} is a generalization 
of Proposition 4.9 from paper \cite{Ezra-infty} \footnote{We recently found a very nice proof of \cite[Proposition 4.9]{Ezra-infty} 
in \cite{BGNT}. See Proposition 3.4 in \cite{BGNT}.}.

Since it is more convenient to deal with shifted $L_{\infty}$-algebras 
(a.k.a. $\sLie$-algebras) than with usual $L_{\infty}$-algebras, the bulk of the presentation is 
given in the setting of  shifted $L_{\infty}$-algebras which are briefly reviewed in   
Section \ref{sec:prelim}. In this section, we also formulate the main result of this
paper (see Theorem \ref{thm:GM}), prove an important particular case of this theorem 
(see Proposition \ref{prop:Abelian}) and outline the structure of the proof of the main result.
In this respect, Section \ref{sec:prelim} may be viewed as more detailed introduction 
to our paper. 

The proof of the main result occupies Sections \ref{sec:pi0-level} and \ref{sec:higher}. 
In Section  \ref{sec:pi0-level}, we take care of the induced map from 
$\pi_0\big( \mMC_{\bul}(L) \big)$ to $\pi_0 \big( \mMC_{\bul}(\ti{L}) \big)$ and, 
in Section \ref{sec:higher}, we take care of the corresponding map 
for higher homotopy groups of the simplicial sets  $\mMC_{\bul}(L)$ 
and $\mMC_{\bul}(\ti{L})$. 

The three appendices at the end of the paper are devoted to proofs of various 
technical statements used in the bulk of the paper. In Appendix \ref{app:Ezra-lemma}, 
we recall the operators  $h^{i}_{n} \maps \Omega^{\bullet}_{n} \to \Omega^{\bullet-1}_{n}$
used in  \cite{Dupont}, \cite{Ezra-infty} and prove a version of Lemma 4.6 from 
\cite{Ezra-infty} for filtered $L_{\infty}$-algebras. In Appendix \ref{app:rectify}, we introduce the 
notion of rectified $1$-cell in  $\mMC_{\bul}(L)$ and prove that if two $0$-cells are connected 
in $\mMC_{\bul}(L)$ then they can be connected by a rectified $1$-cell. 
Finally, in Appendix \ref{app:con-ing}, we prove that, under certain conditions, an infinite 
sequence of $1$-cells in  $\mMC_{\bul}(L)$ can be ``composed''. We believe that this 
appendix has an independent value.

~\\

\noindent
\textbf{Acknowledgements:} Both authors acknowledge a partial support from NSF grant DMS-1161867.
C.L.R. also acknowledges support from the German Research Foundation (Deutsche
Forschungsgemeinschaft (DFG)) through the Institutional Strategy of the
University of G\"ottingen.

\subsubsection*{Notation and conventions}

A big part of our conventions is borrowed from our paper \cite{EnhancedLie}.
Thus, we work in the setting of unbounded cochain complexes of $\bbk$-vector 
spaces where $\bbk$ is any field of characteristic zero. We will frequently use the ubiquitous
abbreviation ``dg'' (differential graded) to refer to algebraic objects in the category 
of such cochain complexes. For a cochain complex $V$ we denote 
by $\bs V$ (resp. by $\bs^{-1} V$) the suspension (resp. the 
desuspension) of $V$\,. In other words, 
$$
\big(\bs V\big)^{\bul} = V^{\bul-1}\,,  \qquad
\big(\bs^{-1} V\big)^{\bul} = V^{\bul+1}\,. 
$$
The notation $\cZ^d(V)$ is reserved for the subspace of degree $d$ cocycles in $V$.

The notation $S_{n}$ is reserved for the symmetric group 
on $n$ letters and  $\Sh_{p_1, \dots, p_k}$ denotes 
the subset of $(p_1, \dots, p_k)$-shuffles 
in $S_n$, i.e.  $\Sh_{p_1, \dots, p_k}$ consists of 
elements $\si \in S_n$, $n= p_1 +p_2 + \dots + p_k$ such that 
$$
\begin{array}{c}
\si(1) <  \si(2) < \dots < \si(p_1),  \\[0.3cm]
\si(p_1+1) <  \si(p_1+2) < \dots < \si(p_1+p_2), \\[0.3cm]
\dots   \\[0.3cm]
\si(n-p_k+1) <  \si(n-p_k+2) < \dots < \si(n)\,.
\end{array}
$$

For a graded vector space (or a cochain complex) $V$
the notation $S(V)$ (resp. $\und{S}(V)$) is reserved for the
underlying vector space of the
symmetric algebra (resp. the truncated symmetric algebra) of $V$: 
$$
S(V) = \bbk \oplus V \oplus S^2(V) \oplus S^3(V) \oplus \dots\,, 
$$ 
$$
\und{S}(V) =  V \oplus S^2(V) \oplus S^3(V) \oplus \dots\,,
$$
where 
$$
S^n(V) = \big( V^{\otimes_{\bbk}\, n} \big)_{S_n}
$$
is the subspace of coinvariants with respect to the obvious action of $S_n$.

The graded vector space $\und{S}(V)$ is usually considered as the 
cocommutative coalgebra without counit and with the comultiplication $\D$ given by 
the formula 
$$
\D(v_1, v_2, \dots, v_n) : = \sum_{p = 1}^n 
\sum_{\si \in \Sh_{p, n-p} } 
(-1)^{\ve(\si; v_1, \dots, v_m)} (v_{\si(1)}, \dots, v_{\si(p)}) \otimes 
(v_{\si(p+1)}, \dots, v_{\si(n)})\,,
$$
where $(-1)^{\ve(\si; v_1, \dots, v_m)}$ is the Koszul sign factor
\begin{equation}
\label{ve-si-vvv}
(-1)^{\ve(\si; v_1, \dots, v_m)} := \prod_{(i < j)} (-1)^{|v_i | |v_j|}
\end{equation}
and the product in \eqref{ve-si-vvv} is taken over all inversions $(i < j)$ of $\si \in S_m$.

We often use the plain arrow $\to$ for $\infty$-morphisms of $L_{\infty}$-algebras 
(or shifted $L_{\infty}$-algebras).
Of course, it should be kept in mind that, in general, such morphisms are maps of 
the corresponding coalgebras but not the underlying cochain complexes.     

The abbreviation ``MC'' is reserved for the term ``Maurer-Cartan''.  

We denote by $\Om_{n}$ Sullivan's polynomial de Rham complex on the geometric 
$n$-simplex $\Delta^n$ with coefficients in $\bbk$ and recall that the collection 
$\{\Om_{n} \}_{n \geq 0}$ form a simplicial dg commutative $\bbk$-algebra. 
(See, for example, Section 3 in \cite{Ezra-infty}.) The notation $d$ is reserved 
for the de Rham differential on $\Om_n$. 

\section{Preliminaries}
\label{sec:prelim}

For technical reasons it is more convenient to deal with shifted $L_{\infty}$-algebras
(as in \cite{HAform} and \cite{EnhancedLie}) than with usual $L_{\infty}$-algebras. This is why we 
now  ``shift gears'' and pass to the setting of shifted $L_{\infty}$-algebras
for the rest of the paper. In this section, we briefly review  shifted $L_{\infty}$-algebras, 
introduce the operation $\curv$ (see \eqref{def-curv}) and formulate the main result of this
paper (Theorem \ref{thm:GM}).  We also prove an important 
particular case of the main theorem (Proposition \ref{prop:Abelian}) and outline 
the structure of the proof of Theorem \ref{thm:GM}.
 
Let us recall \cite{EnhancedLie} that a shifted $L_{\infty}$-algebra (a.k.a. $\sLie$-algebra)
is a cochain complex $(L, \pa)$ for which the cocommutative coalgebra
\begin{equation}
\label{coCom-L}
\und{S}(L)
\end{equation}
carries a degree $1$ coderivation $\cQ$ satisfying
\begin{equation}
\label{cQ-square}
\cQ^2 = 0 
\end{equation}
and the condition $\cQ(v) = \pa (v) ~~ \forall~~ v \in L$.  

Since every coderivation $\cQ$ of the coalgebra $\und{S}(L)$ is uniquely determined 
by its composition 
\begin{equation}
\label{cQ-pr}
\cQ' := p_{L} \circ \cQ : \und{S}(L) \to L
\end{equation}
with the projection $p_L : \und{S}(L) \to L$, an $\sLie$-algebra structure on a cochain complex 
$L$ is uniquely determined by the sequence of degree $1$ multi-brackets $(m \ge 2)$
\begin{equation}
\label{m-bracket}
\{~,~, \dots, ~\}_m : S^m(L) \to L
\end{equation}
$$
\{v_1, v_2, \dots, v_m\}_m = \cQ' ( v_1 v_2 \dots v_m)\,, \qquad v_j \in L\,.
$$

Furthermore, equation \eqref{cQ-square} is equivalent to the following sequence of relations
\begin{multline}
\label{sLie-relations}
\pa \{v_1, v_2, \dots, v_m\}_m + 
\sum_{i=1}^m (-1)^{|v_1| + \dots + |v_{i-1}|} 
\{v_1,  \dots, v_{i-1},  \pa v_i, v_{i+1}, \dots, v_{m}\}_m \\
+ \sum_{k=2}^{m-1} 
\sum_{\si \in \Sh_{k, m-k} } 
(-1)^{\ve(\si; v_1, \dots, v_m)}
\{ \{v_{\si(1)}, \dots, v_{\si(k)} \}_k , v_{\si(k+1)}, \dots, v_{\si(m)} \}_{m-k+1} = 0\,,
\end{multline}
where $(-1)^{\ve(\si; v_1, \dots, v_m)}$ is the Koszul sign factor defined in \eqref{ve-si-vvv}.

An $\infty$-morphism $U$ from a $\sLie$-algebra $(L, \cQ)$ to a $\sLie$-algebra 
$(\ti{L}, \ti{\cQ})$ is a homomorphism 
\begin{equation}
\label{U}
U : ( \und{S}(L), \cQ) \to (\und{S}(\ti{L}),  \ti{\cQ})   
\end{equation}
of the corresponding dg cocommutative coalgebras. 

We recall that any such coalgebra homomorphism $U$ is uniquely determined 
by its composition $U'$ with the projection $p_{\ti{L}}: \und{S}(\ti{L}) \to \ti{L}$: 
$$
U' : = p_{\ti{L}} \circ U : \und{S}(L) \to \ti{L}\,. 
$$  

For every $\infty$-morphism \eqref{U} the map 
\begin{equation}
\label{vf}
\vf : = U' \Big|_{L} : L \to \ti{L}  
\end{equation}
is a morphism of cochain complexes and we call $\vf$ the linear term of $U$. 
An $\infty$-morphism $U$ for which $\vf$ induced an isomorphism 
$H^{\bul}(L) \to H^{\bul}(\ti{L})$ is called an $\infty$ quasi-isomorphism of 
$\sLie$-algebras.  

Following  \cite{EnhancedLie}, we call a $\sLie$-algebra $L$ {\it filtered} 
if the underlying cochain complex $(L, \pa)$ is equipped with a complete 
descending filtration,
\begin{equation}
\label{filtr-L}
L = \cF_{1}L \supset \cF_{2}L \supset  \cF_{3}L  \cdots
\end{equation}
\begin{equation}
\label{L-complete}
L =\varprojlim_{k} L/\cF_{k}L\,,
\end{equation}
which is compatible with the brackets, i.e.
\[
\Bigl \{\cF_{i_{1}}L,\cF_{i_{2}}L,\ldots,\cF_{i_{m}}L \Bigr \}_m \subseteq
\cF_{i_{1} + i_{2} + \cdots + i_{m}} L \quad \forall ~~ m >1.
\]

In this paper, we consider $\infty$-morphisms \eqref{U} of filtered 
$\sLie$-algebras which are compatible with the filtrations, i.e. 
\begin{equation}
\label{U-w-filtration}
U'( \cF_{i_1} L \otimes  \cF_{i_2} L \otimes \dots \otimes  \cF_{i_m} L) \subset \cF_{i_1 + i_2 + \dots + i_m} \ti{L}\,,
\end{equation}

Since $L = \cF_1 L$, condition \eqref{U-w-filtration} guarantees that, for every 
degree $0$ vector $\al \in L$, the infinite sum 
$$
\sum_{m \ge 1} \frac{1}{m!} U'( \al^m)
$$
is a well defined element of $\ti{L}$. We denote by $U_*$ the 
map of sets $L^0 \mapsto \ti{L}^0$ given by this formula  
\begin{equation}
\label{U-star}
U_*(\al) : = \sum_{m \ge 1} \frac{1}{m!} U'( \al^m)\,.
\end{equation}

We denote by $\curv$ the map of sets $L^0 \mapsto L^1$ 
given by the formula 
\begin{equation}
\label{def-curv}
\curv(\al) : = \pa \al + \sum_{m \ge 1} \frac{1}{m!} \{\al, \dots, \al\}_m\,. 
\end{equation}
For example, elements $\al \in L^0$ satisfying $\curv(\al) = 0$ are precisely 
MC elements of the $\sLie$-algebra $L$.
Various useful properties of the operation $\curv$ are listed in the
following proposition, whose proof is given in \cite{EnhancedLie}. 

\begin{prop}[Prop.\ 2.2 \cite{EnhancedLie}]
\label{prop:curv}
Let $L$ and $\ti{L}$ be filtered $\sLie$-algebras and $U$
be an $\infty$-morphism from $L$ to $\ti{L}$ 
satisfying \eqref{U-w-filtration}. Then for every $\al, \beta \in L^0,  v \in L$ 
we have 
\begin{equation}
\label{Bianchi}
\pa(\curv(\al)) + \sum_{m=1}^{\infty} \frac{1}{m!} \{\al, \dots, \al, \curv(\al)\}_{m+1} = 0\,,
\end{equation}
\begin{equation}
\label{U-star-curv}
\curv \big( U_*(\al) \big) = 
\sum_{m \ge 0} \frac{1}{m!} U' \big( \al^m  \curv(\al) \big)\,,
\end{equation}

\begin{equation}
\label{square-curv}
\pa^{\al} \circ \pa^{\al} (v) =  - \{\curv(\al), v\}^{\al}_2\,,
\end{equation}

\begin{equation}
\label{curv-sum}
\curv(\al + \beta) = \curv(\al) +  \pa^{\al}(\beta) + \sum_{m=2}^{\infty} \frac{1}{m!} \{\beta, \dots, \beta \}^{\al}_m\,,
\end{equation}
where $\displaystyle \pa^{\al} : = \pa + \sum_{m \ge 1}  \frac{1}{m!} \{\al, \dots, \al, \cdot \}_{m+1} $ and 
$\displaystyle \{\cdot, \ldots, \cdot \}^{\al}_n : =  \sum_{m \ge 1}  \frac{1}{m!} \{\al, \dots, \al, \cdot, \ldots, \cdot \}_{m+n} $\,.
\end{prop}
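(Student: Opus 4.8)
The plan is to reduce every identity to a short manipulation with ``group-like'' elements in the completed cofree cocommutative coalgebra. First I would extend $\cQ$ to a coderivation of the completed \emph{counital} coalgebra $\wh{S}(L) := \varprojlim_{k} S(L/\cF_{k}L)$ by setting $\cQ(1) = 0$, and extend $U$ to a counital coalgebra morphism $\wh{S}(L) \to \wh{S}(\ti{L})$ with $U(1) = 1$; these canonical extensions lose no information. For $\al \in L^{0} = \cF_{1}L^{0}$ the element $\exp(\al) := \sum_{m \ge 0}\tfrac{1}{m!}\al^{m}$ is then a well-defined group-like element of $\wh{S}(L)$, i.e.\ $\D(\exp\al) = \exp\al \otimes \exp\al$, since $\al^{m}$ lies in filtration degree $\ge m$ and $\wh{S}(L)$ is complete. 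The computational engine is the identity, valid for any coderivation $D$ of $\wh{S}(L)$ with $D' := p_{L}\circ D$,
\begin{equation}
\label{plan:key}
D(\exp\al) \;=\; D'(\exp\al)\cdot\exp\al ,
\end{equation}
obtained by summing $D(\al^{n}) = \sum_{k \ge 0}\binom{n}{k}D'_{k}(\al^{k})\,\al^{n-k}$ over $n$ (no Koszul signs enter because $|\al| = 0$); here $D'(\exp\al) := \sum_{k \ge 0}\tfrac{1}{k!}D'_{k}(\al^{k}) = p_{L}\bigl(D(\exp\al)\bigr)$. Applied to $D = \cQ$, whose arity-$1$ component is the differential $\pa$ and whose components of arity $\ge 2$ are the multibrackets $\{\cdot,\dots,\cdot\}_{m}$, \eqref{plan:key} reads $\cQ(\exp\al) = \curv(\al)\cdot\exp\al$.

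For \eqref{Bianchi} I would apply $\cQ$ once more to $\cQ(\exp\al) = \curv(\al)\cdot\exp\al$: the left-hand side vanishes since $\cQ^{2} = 0$, and projecting the right-hand side onto $L$ gives $p_{L}\cQ\bigl(\curv(\al)\exp\al\bigr) = \pa(\curv(\al)) + \sum_{m \ge 1}\tfrac{1}{m!}\{\al,\dots,\al,\curv(\al)\}_{m+1} = 0$. For \eqref{U-star-curv}, observe that $U(\exp\al)$ and $\exp\bigl(U_{*}(\al)\bigr)$ are both group-like in $\wh{S}(\ti{L})$ with the same image $U'(\exp\al) = U_{*}(\al)$ under $p_{\ti{L}}$, hence they coincide. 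Computing $\ti{\cQ}\bigl(U(\exp\al)\bigr)$ two ways --- once as $\curv\bigl(U_{*}(\al)\bigr)\cdot\exp\bigl(U_{*}(\al)\bigr)$ via \eqref{plan:key} for $\ti{L}$, once as $U\bigl(\cQ(\exp\al)\bigr) = U\bigl(\curv(\al)\exp\al\bigr)$ via $\ti{\cQ}\circ U = U\circ\cQ$ --- and projecting the resulting equality onto $\ti{L}$ yields $\curv\bigl(U_{*}(\al)\bigr) = U'\bigl(\curv(\al)\exp\al\bigr) = \sum_{m \ge 0}\tfrac{1}{m!}U'\bigl(\al^{m}\,\curv(\al)\bigr)$.

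For the remaining two identities I would pass to the twisted structure. Let $\Phi_{\al}\maps\wh{S}(L) \to \wh{S}(L)$, $\Phi_{\al}(x) := \exp(\al)\cdot x$; this is a coalgebra morphism precisely because $\exp\al$ is group-like, and it is invertible with inverse $\Phi_{-\al}$. The conjugated codifferential $\cQ^{\al} := \Phi_{-\al}\circ\cQ\circ\Phi_{\al}$ again squares to zero. Since $\Phi_{\al}(\exp\beta) = \exp(\al+\beta)$ and $\cQ(\exp(\al+\beta)) = \curv(\al+\beta)\cdot\exp(\al+\beta)$, we get $\cQ^{\al}(\exp\beta) = \Phi_{-\al}\bigl(\curv(\al+\beta)\exp(\al+\beta)\bigr) = \curv(\al+\beta)\cdot\exp\beta$ for all $\beta \in L^{0}$. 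Expanding this identity in powers of $\beta$ identifies the structure maps of the coderivation $\cQ^{\al}$: the arity-$0$ map is $\curv(\al)$, the arity-$1$ map is exactly $\pa^{\al}$, and the arity-$m$ map ($m \ge 2$) is exactly $\{\cdot,\dots,\cdot\}^{\al}_{m}$ --- a direct comparison with the closed formulas recalled in the statement. Projecting $\cQ^{\al}(\exp\beta) = \curv(\al+\beta)\cdot\exp\beta$ onto $L$ now gives $\curv(\al+\beta) = \curv(\al) + \pa^{\al}(\beta) + \sum_{m \ge 2}\tfrac{1}{m!}\{\beta,\dots,\beta\}^{\al}_{m}$, which is \eqref{curv-sum}. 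Finally, applying $(\cQ^{\al})^{2} = 0$ to $v \in L \subset \wh{S}(L)$ and extracting the $L$-component of the output: the only two contributions are $\pa^{\al}\bigl(\pa^{\al}(v)\bigr)$ (the arity-$1$ map composed with itself) and $\{\curv(\al),v\}^{\al}_{2}$ (the arity-$2$ map applied to the value $\curv(\al)$ of the arity-$0$ map together with $v$), whence $\pa^{\al}\circ\pa^{\al}(v) = -\{\curv(\al),v\}^{\al}_{2}$, which is \eqref{square-curv}.

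The only genuinely delicate points --- and thus the main obstacle --- are bookkeeping ones. First, one must check that every infinite series above converges in the completion $\wh{S}(L)$; this follows from $L = \cF_{1}L$, the completeness \eqref{L-complete}, and the filtration-compatibility of $\cQ$ and $U$ (condition \eqref{U-w-filtration}). Second, one must verify that the structure maps of $\cQ^{\al} = \Phi_{-\al}\cQ\Phi_{\al}$ really agree with the explicit formulas defining $\pa^{\al}$ and $\{\cdot,\dots,\cdot\}^{\al}_{n}$; this is a routine expansion of the coderivation rule. No Koszul signs arise from the copies of $\al$ (since $|\al| = 0$) and the odd-degree element $\curv(\al)$ occurs exactly once in each relevant monomial, so the signs in \eqref{Bianchi}--\eqref{square-curv} come out directly.
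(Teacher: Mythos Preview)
The paper itself does not prove this proposition: it is quoted verbatim from \cite{EnhancedLie} and the text says ``whose proof is given in \cite{EnhancedLie}''. So there is no in-paper argument to compare against; what you have supplied is a self-contained proof where the paper offers none.

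Your argument is correct and is in fact the standard ``group-like'' proof of these identities. The key computational identity $\cQ(\exp\al)=\curv(\al)\cdot\exp\al$ and the observation $U(\exp\al)=\exp\bigl(U_*(\al)\bigr)$ are exactly the right tools, and conjugating by $\Phi_{\al}$ to obtain a curved coderivation $\cQ^{\al}$ whose structure maps are $\curv(\al),\,\pa^{\al},\,\{\cdot,\dots,\cdot\}^{\al}_m$ is the cleanest route to \eqref{square-curv} and \eqref{curv-sum}. Your sign check for \eqref{square-curv} is fine: the $S^2$-contribution $\pa^{\al}(\curv(\al))\cdot v$ drops out upon projecting to $L$, and the two mixed terms $\pm\curv(\al)\cdot\pa^{\al}(v)$ cancel because $|\curv(\al)|=1$, leaving precisely $\pa^{\al}\pa^{\al}(v)+\{\curv(\al),v\}^{\al}_2=0$.

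One small remark: when you identify the arity-$n$ component of $\cQ^{\al}$ with $\{\cdot,\dots,\cdot\}^{\al}_n$, note that the displayed definition in the proposition writes $\sum_{m\ge 1}$, whereas the twisting formula \eqref{Linft_twisted} used later in the paper has $\sum_{k\ge 0}$. Your computation naturally produces the $m\ge 0$ version (which is the one actually needed for \eqref{curv-sum} and \eqref{square-curv} to hold); the discrepancy is a typo in the proposition statement rather than a gap in your argument.
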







\subsection{Twisting of $\sLie$-algebra structures by MC elements}
\label{sec:twisting}

Let us recall \cite[Section 2.4]{thesis}, \cite{DeligneTw}, \cite[Section 4]{Ezra-infty} that, given 
a MC element $\al$ of a filtered $\sLie$-algebra $L$ we can form a new filtered $\sLie$-algebra $L^{\al}$\,.
As a graded vector space with a filtration, $L^{\al} = L$; the differential $\pa^{\al}$ and the multi-brackets
$\{~,\dots, ~ \}^{\al}_m$ on $L^{\al}$ are defined by the formulas 
\begin{equation}
\label{diff-twisted}
\pa^{\al} (v) : = \pa (v)  + \sum_{k=1}^{\infty} \frac{1}{k!} \{\al, \dots, \al, v\}_{k+1}\,,
\end{equation}
\begin{equation}
\label{Linft_twisted}
\{v_1,v_2,\cdots, v_m \}^{\alpha}_m : =
\sum_{k=0}^{\infty} \frac{1}{k!} \{\al, \dots, \al, 
v_1, v_2,\cdots, v_m \}_{k+m}\,.
\end{equation}

Equation \eqref{U-star-curv} implies that, for every $\infty$-morphism $U$ \eqref{U}
compatible with the filtrations and for every MC element $\al \in L$,  
$$
U_*(\al)
$$
is a MC element of $\ti{L}$. Finally,  for every $\infty$-morphism $U$ \eqref{U}
compatible with the filtrations and for every MC element $\al \in L$,
we can construct a new $\infty$-morphism
\[
U^{\alpha} \maps L^{\alpha} \to \ti{L} ^{U_*(\alpha)}
\]
with 
\begin{equation} 
\label{twisted_map}
p_{\ti{L}} \circ U^{\alpha} (v_1 v_2 \dots v_m) : = \sum_{k=0}^{\infty} \frac{1}{k!}\,  U' (\al^k \, v_1 v_2 \dots v_m)\,.
\end{equation}

\subsection{The Deligne-Getzler-Hinich (DGH) $\infty$-groupoid and Theorem \ref{thm:GM}}
\label{sec:DGH-and-thm}

For every filtered $\sLie$-algebra $L$ we introduce the following collection of 
filtered $\sLie$-algebras 
$$
L \hotimes \Om_n\,, \qquad n \ge 0
$$
where $L$ is considered with the topology coming from the filtration and the dg commutative 
algebra $\Om_n$ is considered with the discrete topology. 

The simplicial set $\mMC_{\bul}(L)$ with 
$$
\mMC_n(L) : = \MC (L \hotimes \Om_n) 
$$
is the main hero of this paper.  Due to \cite[Proposition 4.1]{EnhancedLie}, 
$\mMC_{\bul}(L)$ is a Kan complex (a.k.a. an $\infty$-groupoid) for every 
filtered $\sLie$-algebra $L$. We call this simplicial set the Deligne-Getzler-Hinich (DGH)
$\infty$-groupoid of $L$.

For example, $0$-cells of  $\mMC_{\bul}(L)$ are precisely MC elements of $L$ and 
$1$-cells in $\mMC_{\bul}(L)$ are elements 
$$
\beta = \beta_0(t_0) + d t_0 \beta_1(t_0)\,, \qquad 
\beta_0(t_0)\in L^0 \hotimes \bbk[t_0]\,, \qquad 
\beta_1(t_0)\in L^{-1} \hotimes \bbk[t_0]\,,
$$
satisfying the equations 
\begin{equation}
\label{curv-beta-0}
\curv\big( \beta_0(t_0) \big) =0
\end{equation}
and 
\begin{equation}
\label{diff-beta-0}
\frac{d}{d t_0} \beta_0(t_0)  =  \pa^{\beta_0(t_0)} \, \beta_1(t_0) \,,
\end{equation}
where $\pa^{\beta_0(t_0)}$ denotes the differential on $L\hotimes \bbk[t_0]$ twisted by
the MC element $\beta_0(t_0)$ (as in Sec. \ref{sec:twisting}).
The zeroth face $\md_0 (\beta) $ (resp. the first face $\md_1(\beta)$) of $\beta$ is the 
MC element $\beta_0(0)$ (resp. $\beta_0(1)$) of $L$.

Let us recall that any $\infty$-morphism $U$ \eqref{U}
of filtered $\sLie$-algebras satisfying \eqref{U-w-filtration}
gives us the collection of $\infty$-morphisms of $\sLie$-algebras
\begin{equation}
\label{U-level-n}
\begin{split}
U^{(n)} \maps  L \hotimes \Om_n &~ \to ~ \ti{L} \hotimes \Om_n \\
U^{(n)} \bigl(v_1 \tensor \omega_1, v_2 \tensor \omega_2,\ldots,v_m \tensor
\omega_m \bigr) &= \pm U(v_1,v_2,\ldots,v_m)\tensor \omega_1 \omega_2 \cdots \omega_m,
\end{split}
\end{equation}
where $v_i \in L$, $\omega_i \in \Omega_n$, and $\pm$ is the usual
Koszul sign. This collection is obviously compatible with 
all the faces and all the degeneracies.  Hence, $U$ induces a morphism of simplicial sets 
\begin{equation}
\label{MC-U}
\mMC_{\bul}(U) : \mMC_{\bul}(L) \to  \mMC_{\bul}(\ti{L})
\end{equation}
given by the formula 
\begin{equation}
\label{MC-U-formula}
\mMC_{n}(U) (\al) : = U^{(n)}_*(\al)\,.
\end{equation}

The main result of this paper is the following version of the 
Goldman-Millson theorem \cite{GM}:
\begin{thm}
\label{thm:GM}
Let $L$ and $\ti{L}$ be filtered $\sLie$-algebras and 
$U : L \to \ti{L}$ be an $\infty$-morphism compatible with 
the filtrations on $L$ and $\ti{L}$. If the linear term 
$\vf$ of $U$ induces a quasi-isomorphism of cochain complexes 
\begin{equation}
\label{vf-cF-n}
\vf \Big|_{\cF_n L } :
\cF_n L \to \cF_n \ti{L}  
\end{equation}
for every $n$, then $\mMC_{\bul}(U)$ is a homotopy equivalence of simplicial sets. 
\end{thm}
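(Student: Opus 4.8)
The plan is to reduce the theorem to two separate statements — bijectivity on $\pi_0$ and on all higher homotopy groups $\pi_k$ for $k\ge 1$ based at an arbitrary MC element — and to attack each by a filtration-induction argument that bootstraps from the abelian (graded) case. First I would observe that a morphism of Kan complexes is a homotopy equivalence iff it induces a bijection on $\pi_0$ and, for every choice of basepoint $0$-cell, an isomorphism on all $\pi_k$, $k\ge 1$. So it suffices to prove these two things. The key technical device is the complete descending filtration: write $\mathrm{gr}^m L = \cF_m L/\cF_{m+1}L$, note that the induced filtration on $L\hotimes\Om_n$ is again complete, and that $\vf$ being a filtered quasi-isomorphism (a quasi-isomorphism on each $\cF_m L$) forces $\mathrm{gr}(\vf)$ to be a quasi-isomorphism on each associated-graded piece, which is an \emph{abelian} shifted $L_\infty$-algebra (zero brackets, by \eqref{int-Q-filtr-OK}). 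For the abelian case, $\mMC_\bul$ of a complex is, up to the Dold–Kan type identification, determined by cohomology, so Proposition~\ref{prop:Abelian} gives the base case of both inductions.

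Next I would set up the induction itself. For each $N$ consider the truncated quotient $\sLie$-algebra $L/\cF_{N+1}L$; then $L = \varprojlim_N L/\cF_{N+1}L$ and hence $\mMC_\bul(L) = \varprojlim_N \mMC_\bul(L/\cF_{N+1}L)$, and likewise for $\ti L$. The quotient map $L/\cF_{N+1}L \to L/\cF_N L$ has kernel the abelian $\sLie$-algebra $\mathrm{gr}^N L$ sitting as a (central, square-zero) ideal, producing a short exact sequence of $\sLie$-algebras, which by Getzler's fibration results (or \cite[Prop.~4.1]{EnhancedLie} together with the obstruction-theoretic analysis in \cite{Ezra-infty}) yields a fibration of Kan complexes
\[
\mMC_\bul(\mathrm{gr}^N L) \longrightarrow \mMC_\bul(L/\cF_{N+1}L) \longrightarrow \mMC_\bul(L/\cF_N L),
\]
and similarly for $\ti L$, with $U$ inducing a map of the three fibration sequences. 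By the abelian base case, $U$ is a homotopy equivalence on the fibers; by induction on $N$, it is a homotopy equivalence on the base; the five lemma on the long exact sequences of homotopy groups (being careful that the fiber sequence is based at a possibly twisted MC element, so one must replace $L$ by the twist $L^{\alpha}$ as in Section~\ref{sec:twisting}, which is legitimate because twisting respects the filtration and $U^\alpha$ is again filtered) then shows $U$ is a homotopy equivalence on $\mMC_\bul(L/\cF_{N+1}L)$. Passing to the limit, one concludes the result for $L$ itself; here one uses that $\varprojlim$ of a tower of fibrations of Kan complexes is well-behaved and that $\pi_k$ commutes with this $\varprojlim$ up to a $\lim^1$ term which vanishes because at each stage the maps on $\pi_k$ are already isomorphisms (so the towers are Mittag–Leffler, even pro-constant).

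The main obstacle, I expect, is the control of $\pi_0$ under the inverse limit: surjectivity on $\pi_0(\mMC_\bul(L))$ requires lifting a compatible family of gauge-equivalence classes through the tower, and injectivity requires showing that two $0$-cells of $L$ that become gauge equivalent in every quotient $L/\cF_{N+1}L$ are already gauge equivalent in $L$ — i.e. that an infinite sequence of $1$-cells, each defined modulo deeper and deeper filtration, can be ``composed'' into a single genuine $1$-cell of $\mMC_\bul(L)$. This convergence-of-gauges issue is exactly the content the excerpt flags for Appendix~\ref{app:con-ing} (composing infinite sequences of $1$-cells) and Appendix~\ref{app:rectify} (rectified $1$-cells), together with the filtered version of Getzler's Lemma~4.6 from Appendix~\ref{app:Ezra-lemma} (the Dupont-type operators $h^i_n$) that makes the per-level gauge-fixing compatible enough to pass to the limit. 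So in practice I would first prove the $\pi_0$-statement directly using rectified $1$-cells and the infinite-composition lemma (this is Section~\ref{sec:pi0-level}), and then handle $\pi_k$ for $k\ge1$ by the twist-and-fibration argument above (Section~\ref{sec:higher}), in each case reducing to Proposition~\ref{prop:Abelian} at the associated-graded level.
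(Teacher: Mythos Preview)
Your proposal is correct and takes essentially the same approach as the paper: a direct attack on $\pi_0$ via rectified $1$-cells and infinite concatenation (the paper's Section~\ref{sec:pi0-level} together with Appendices~\ref{app:rectify} and~\ref{app:con-ing}), followed by the tower-of-fibrations and five-lemma argument on the quotients $L/\cF_n L$ reducing to the abelian case (Section~\ref{sec:higher} and Proposition~\ref{prop:Abelian}), and finally passage to the inverse limit. The only cosmetic difference is that the paper invokes the model category of towers from \cite[Section~VI]{Goerss-Jardine} for the limit step rather than the $\varprojlim^1$/Mittag--Leffler language you use, and the paper's five-lemma step (Proposition~\ref{prop:induction}) is done with some extra care at the $\pi_1 \to \pi_0$ boundary since $\pi_0$ is only a pointed set.
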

\begin{remark}
\label{rem:thms-r-equiv}
Due to the obvious equivalence between the category of filtered $\sLie$-algebras 
and the category of filtered $L_{\infty}$-algebras, Theorem \ref{thm:main} stated in the 
Introduction is equivalent to Theorem \ref{thm:GM}.
\end{remark}

\subsection{What if we deal with abelian $\sLie$-algebras?}
If $\{~,\dots, ~\}_m = 0$ for all $m \ge 2$, the $\sLie$-algebra $L$ is nothing but 
an unbounded cochain complex. In this case we may consider 
the simplicial set 
$$
\mMC_n(L) : = \MC(L\otimes \Om_n)
$$ 
with usual tensor product $\otimes$ which is equivalent to 
considering $L$ with the silly filtrations: 
$$
L = \cF_1 L \supset \cF_2 L =  \cF_3 L = \dots = \bfzero\,.
$$ 

Since all the brackets are zero, the set 
$\MC(L\otimes \Om_n)$ is precisely the subspace 
of degree zero cocycle in $L\otimes \Om_n$:
\begin{equation}
\label{MC-easy}
\MC(L\otimes \Om_n) = \cZ^0 \big( L\otimes \Om_n,   \pa + d  \big)\,.
\end{equation}
In particular, the simplicial set $\mMC_{\bul}(L)$ is a simplicial vector space. 
 
Let us consider the very special case of Theorem \ref{thm:GM} in which 
both $L$ and $\ti{L}$ are abelian (with the above silly filtrations)
and $U$ is a strict morphism $\vf: L \to \ti{L}$, i.e. 
$$
U' (v_1 v_2 \dots v_m) = 0 \quad \textrm{if} \quad m \ge 2\,.
$$ 

In this case we have 
\begin{prop}
\label{prop:Abelian}
If $\vf : L \to \ti{L}$ is a quasi-isomorphism of cochain complex and 
$L$, $\ti{L}$ are viewed as the abelian $\sLie$-algebras then 
the induced map  
\begin{equation}
\label{mMC-vf}
\mMC_{\bul}(\vf) : \mMC_{\bul}(L) \to \mMC_{\bul}(\ti{L})
\end{equation}
is a weak equivalence of simplicial vector spaces. 
\end{prop}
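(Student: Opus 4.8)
The plan is to reduce everything to classical facts about the de Rham--Sullivan functor and to use the explicit description \eqref{MC-easy} of the simplicial vector space $\mMC_\bul(L)$ in the abelian case. First I would observe that $\mMC_\bul(L)$ is, by \eqref{MC-easy}, the simplicial vector space $n \mapsto \cZ^0(L \otimes \Om_n, \pa + d)$, which is precisely the degree-zero part of the simplicial cochain complex $n \mapsto (L \otimes \Om_n)$ with total differential $\pa + d$. In other words, $\mMC_\bul(L)$ is obtained by applying the functor $\cZ^0(-)$ to the simplicial cochain complex $L \otimes \Om_\bul$, and $\vf$ induces the evident map of simplicial cochain complexes $L\otimes\Om_\bul \to \ti L\otimes\Om_\bul$, levelwise equal to $\vf\otimes\id_{\Om_n}$.

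Next I would reduce to a statement about homotopy groups. Since $\mMC_\bul(L)$ is a simplicial vector space, hence a simplicial abelian group, its homotopy groups are computed by the normalized (Moore) chain complex $N_\bul \mMC_\bul(L)$, and a map of simplicial vector spaces is a weak equivalence iff it induces an isomorphism on all homotopy groups, iff the induced chain map $N_\bul\mMC_\bul(\vf)$ is a quasi-isomorphism. So it suffices to show that $\vf$ induces an isomorphism $\pi_k\big(\mMC_\bul(L)\big) \to \pi_k\big(\mMC_\bul(\ti L)\big)$ for all $k \ge 0$, and this in turn would follow from a general principle: if $C_\bul \to D_\bul$ is a map of simplicial cochain complexes which is a quasi-isomorphism in each simplicial degree $n$, then the induced map on the simplicial vector spaces $\cZ^0(C_\bul) \to \cZ^0(D_\bul)$ is a weak equivalence. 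The cleanest route is to package $L\otimes\Om_\bul$ as a double complex (the two gradings being the cohomological degree on $L$ together with $\Om_\bul$, and the simplicial direction converted to a chain complex via the Moore/alternating-sum differential) and run a spectral sequence, or equivalently to invoke the Dold--Kan correspondence together with the Eilenberg--Zilber theorem. Concretely: $\mMC_\bul(L) = \cZ^0\big(\mathrm{Tot}\big)$ where the relevant totalization is of the bicomplex $L^\bullet \otimes \Om_\bul^{\bullet'}$; and $\Om_\bul$, regarded via Dold--Kan, is a model for the simplicial cochain complex whose realization computes $H^\bullet(\Delta^n)=\bbk$ in each degree, so that $H^\bullet(\Om_n)=\bbk$ concentrated in degree $0$ and the simplicial structure is the constant one up to homotopy.

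Then the argument concludes as follows. Filtering the totalization by the $L$-degree gives a spectral sequence whose $E_1$-page in the relevant spot is built from $H^\bullet(L)\otimes(\text{something depending only on }\Om_\bul)$; because $\vf$ is a quasi-isomorphism $H^\bullet(L)\xrightarrow{\sim}H^\bullet(\ti L)$, the map $\mMC_\bul(\vf)$ induces an isomorphism on $E_1$, hence on $E_\infty$, hence on homotopy groups, which is exactly the assertion that $\mMC_\bul(\vf)$ is a weak equivalence of simplicial vector spaces. (In fact one gets the sharper statement $\pi_k\big(\mMC_\bul(L)\big)\cong H^{-k}(L)$, recovering the familiar computation that the abelian DGH $\infty$-groupoid of a cochain complex is the Dold--Kan realization of its good truncation.) The main obstacle, and the only place requiring genuine care rather than bookkeeping, is justifying the interchange of the inverse limit $\hotimes$ (here trivial because of the silly filtration, so $\hotimes = \otimes$) with the passage to cohomology and with the spectral sequence — but with the silly filtrations this is immediate, and one could alternatively avoid spectral sequences entirely by citing the standard fact that tensoring a quasi-isomorphism of cochain complexes with the fixed simplicial cochain complex $\Om_\bul$ and then applying $\cZ^0$ preserves weak equivalences, which is itself a consequence of $\Om_n$ being a cofibrant (indeed, pointwise acyclic-in-positive-degrees) model. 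I would likely present the spectral sequence version since it is self-contained given only Dold--Kan.
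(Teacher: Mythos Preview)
Your overall plan is sound and the target computation $\pi_k\big(\mMC_\bul(L)\big)\cong H^{-k}(L)$ is correct, but there is a real gap. The ``general principle'' you invoke --- that a levelwise quasi-isomorphism of simplicial cochain complexes induces a weak equivalence on $\cZ^0(-)$ --- is false: take $C_n$ to be the acyclic complex $\bbk\xrightarrow{\id}\bbk$ in degrees $-1,0$, constant in $n$, and $D_n=0$; then $\cZ^0(C_\bul)$ is the constant simplicial space $\bbk$ while $\cZ^0(D_\bul)=0$. So the argument \emph{must} use something specific about $\Om_\bul$, and your spectral sequence does not make this explicit. If you filter $\mMC_n(L)=\cZ^0(L\otimes\Om_n)$ by form degree, the associated graded in filtration $p$ is not simply $\cZ^{-p}(L)\otimes\Om_n^p$: only those $\alpha_p$ whose class in $H^{-p}(L)\otimes\Om_n^p$ is $d$-closed extend to a full cocycle, so after splitting $\cZ=B\oplus H$ one finds $\gr^p\cong \big(B^{-p}(L)\otimes\Om_\bul^p\big)\oplus\big(H^{-p}(L)\otimes\cZ^p(\Om_\bul)\big)$. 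To obtain an $E_1$-page of the shape $H^\bullet(L)\otimes(\cdots)$ you now need that each simplicial vector space $\Om_\bul^p$ has acyclic Moore complex --- the ``extendability'' property of polynomial forms --- and this is exactly the substantive input you have not supplied.

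The paper furnishes this input differently and more directly, via Dupont's contraction of $\Om_\bul$ onto the subcomplex $C_\bul$ of elementary (Whitney) forms. The projection $P_\bul$ and homotopy $\ms_\bul$ yield a short exact sequence
\[
0\;\longrightarrow\;(\pa+d)\,\ms_\bul\big((L\otimes\Om_\bul)^0\big)\;\longrightarrow\;\cZ^0(L\otimes\Om_\bul)\;\xrightarrow{\;P_\bul\;}\;\cZ^0(L\otimes C_\bul)\;\longrightarrow\;0
\]
of simplicial vector spaces; the kernel is a retract of $(L\otimes\Om_\bul)^0$, whose Moore complex is acyclic by Getzler's Lemma~3.2, so $P_\bul$ is a weak equivalence. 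Finally $\cZ^0(L\otimes C_\bul)$ is on the nose the Dold--Kan image of the good truncation $\tau_{\le 0}L$, and a commutative square reduces the statement to the hypothesis that $\vf$ is a quasi-isomorphism. Your closing parenthetical about $\Om_\bul$ being ``pointwise acyclic in positive degrees'' is pointing toward the right fact, but the spectral-sequence argument you say you would present does not actually isolate or invoke it.
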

\begin{remark}
\label{rem:prop-Abelian}
In principle, Proposition \ref{prop:Abelian} is consequence of  \cite[Proposition 5.1]{Ezra-infty} 
and \cite[Corollary 5.11]{Ezra-infty}. The proof given below is essentially a more detailed presentation 
of an argument from the proof of \cite[Corollary 5.11]{Ezra-infty}.
\end{remark}
\begin{proof}[ of Proposition \ref{prop:Abelian}]
First, we recall\footnote{See, for example, Corollary 2.5 in \cite[Sec. III.2]{Goerss-Jardine} } that 
for every simplicial vector space $V_{\bul}$ 
\begin{equation}
\label{homot-groups-V}
\pi_0(V_{\bul}) = H_0(\cM(V_{\bul}))\,, \qquad
\pi_i(V_{\bul}, 0) = H_i(\cM(V_{\bul}))\,, \qquad \forall ~~ i \ge 1\,,
\end{equation}
where $\cM(V_{\bul})$ is the Moore chain complex of the simplicial vector space $V_{\bul}$: 
\begin{equation}
\label{Moore-V}
\cM(V_{\bul}) : = \bigoplus_{n} \bs^{n} V_n
\end{equation}
with the differential 
$$
\md : = \sum_{i=0}^n (-1)^i \md_i  ~:~ \bs^{n} V_n \to  \bs^{n-1} V_{n-1}\,.
$$
Furthermore, $\pi_i(V_{\bul}, v)$ is naturally isomorphic to $\pi_i(V_{\bul}, 0)$ for every 
base point $v \in V_0$\,.

Therefore, a map $f: V_{\bul} \to W_{\bul}$ of simplicial vector spaces is a weak equivalence if 
and only if the induced map 
$$
\cM(f) : \cM(V_{\bul}) \to \cM(W_{\bul})
$$
is a quasi-isomorphism of chain complexes. 

Using the elementary forms introduced on page 282 of \cite[Section 3]{Ezra-infty}, one 
can identify the simplicial cochain complex 
\begin{equation}
\label{C-n}
C_n : = C^{\bul}_{simpl} (\D^n, \bbk)
\end{equation}
with a subcomplex of $\Om_n$\,.

Eq. (3-6) from \cite[Section 3]{Ezra-infty} defines projections 
\begin{equation}
\label{P-n}
P_n : \Om_n \to C_n 
\end{equation}
which assemble into a map of simplicial cochain complexes. 

Due to \cite[Theorem 3.7]{Ezra-infty} there exists a simplicial endomorphism 
\begin{equation}
\label{ms}
\ms_n : \Om^{\bul}_n \to  \Om^{\bul - 1}_n 
\end{equation}
which satisfies
\begin{equation}
\label{ms-homotopy}
\id - P_n = d \ms_n + \ms_n d
\end{equation}
and 
\begin{equation}
\label{P-ms}
P_n \circ \ms_n = 0\,, \qquad \forall~~ n \ge 0\,.
\end{equation} 
We call $\ms_{\bul}$ the {\it Dupont operator}\footnote{A very similar operator was introduced 
in Dupont's proof \cite[Eq. (2.25), page 25]{Dupont} of the de Rham theorem.}.

We now extend the operators $P_n$, $\ms_n$, in the obvious way, to 
$$
L \otimes  \Om_n 
$$
for any cochain complex $(L, \pa)$. Then, equation \eqref{ms-homotopy} becomes 
\begin{equation}
\label{ms-homotopy-L}
\id - P_n = (\pa + d) \ms_n + \ms_n (\pa + d)\,.
\end{equation}

Equations \eqref{P-ms} and \eqref{ms-homotopy-L} imply that 
the projection $P_{\bul}$ gives us the following short exact sequence 
of simplicial vector spaces 
\begin{equation}
\label{ses-P-bul}
\begin{tikzpicture}
\matrix (m) [matrix of math nodes, row sep=1em, column sep=2em]
{ \bfzero &  (\pa + d)  \ms_{\bul} \big( (L \otimes  \Om_{\bul} )^0 \big)  & \cZ^0(L \otimes  \Om_{\bul} ) 
& \cZ^0(L \otimes  C_{\bul}) & \bfzero \\ };
\path[->,font=\scriptsize]
(m-1-1) edge (m-1-2)  (m-1-2) edge (m-1-3) (m-1-3) edge node[above] {$P_{\bul}$} (m-1-4)  (m-1-4) edge  (m-1-5); 
\end{tikzpicture}
\end{equation}

Using equations \eqref{P-ms} and \eqref{ms-homotopy-L} again, we get 
$$
((\pa + d)  \ms_{\bul})^2 =  (\pa + d)  \ms_{\bul}   (\pa + d) \ms_{\bul} = 
  (\pa + d)  \big( \ms_{\bul}   (\pa + d)  + (\pa +d) \ms_{\bul} \big) \ms_{\bul}
$$
$$
= (\pa + d) \big(\id - P_{\bul} \big) \ms_{\bul} =  (\pa + d)  \ms_{\bul} 
$$
which means that the operator $(\pa + d)  \ms_{\bul} $ is a retract of simplicial vector spaces
\begin{equation}
\label{L-Omega-0}
(L \otimes  \Om_{\bul} )^0 \xto{(\pa + d)  \ms_{\bul}} (\pa + d)  \ms_{\bul} \big( (L \otimes  \Om_{\bul} )^0 \big)\,.
\end{equation}

On the other hand, Lemma 3.2 from \cite{Ezra-infty} implies that 
$(L \otimes  \Om_{\bul} )^0$
has an acyclic Moore complex. Therefore, 
the Moore complex 
$$
\cM \Big(\, (\pa + d)  \ms_{\bul} \big( (L \otimes  \Om_{\bul} )^0 \big) \, \Big)
$$
of the simplicial vector space $ (\pa + d)  \ms_{\bul} \big( (L \otimes  \Om_{\bul} )^0 \big)$
is also acyclic. 

Thus we conclude that the projection $P_{\bul}$ gives us a quasi-isomorphism of 
chain complexes: 
\begin{equation}
\label{cM-P-bul}
\cM(P_{\bul}) : \cM\big( \cZ^0(L \otimes  \Om_{\bul} ) \big) \to
 \cM\big( \cZ^0(L \otimes C_{\bul} ) \big)\,.
\end{equation}

Let us now observe that the simplicial vector space 
$$
\cZ^0(L \otimes C_{\bul} )
$$
is precisely the result of applying the Dold-Kan functor to the chain complex 
which is obtained from the  truncation
\begin{equation}
\label{truncate-L}
\begin{tikzpicture}
\matrix (m) [matrix of math nodes, row sep=1em, column sep=2em]
{ \stackrel{~}{\cdots} & L^{-2} & L^{-1} & \cZ^0(L) \\ };
\path[->,font=\scriptsize]
(m-1-1) edge node[above] {$\pa$} (m-1-2)  (m-1-2) edge  node[above] {$\pa$} (m-1-3) 
(m-1-3) edge node[above] {$\pa$} (m-1-4); 
\end{tikzpicture}
\end{equation}
of $(L, \pa)$ by reversing the grading. 

Since $\vf : L \to \ti{L}$ is a quasi-isomorphism of cochain complexes, it also 
induces a quasi-isomorphism of the corresponding truncations of $L$ and $\ti{L}$.

Thus we get a commutative diagram of chain complexes 
\begin{equation}
\label{Moore-diag}
\begin{tikzpicture}
\matrix (m) [matrix of math nodes, row sep=2em, column sep=3em]
{ \cM\big( \cZ^0(L \otimes  \Om_{\bul} ) \big) &  \cM \big( \cZ^0(L \otimes C_{\bul} ) \big) \\
 \cM\big( \cZ^0(\ti{L} \otimes  \Om_{\bul} ) \big) &  \cM \big( \cZ^0(\ti{L} \otimes C_{\bul} ) \big) \\ };
\path[->,font=\scriptsize]
(m-1-1) edge node[above] {$\cM(P_{\bul})$} (m-1-2)  edge node[left] {$\psi$} (m-2-1)
(m-1-2)  edge node[auto] {$\vf_*$} (m-2-2)
(m-2-1)  edge node[above] {$\cM(P_{\bul})$} (m-2-2); 
\end{tikzpicture}
\end{equation}
where $\psi : = \cM\circ \mMC_{\bul}(\vf)$\,.

We already proved that the horizontal arrows in \eqref{Moore-diag} are quasi-isomorphisms. 
Furthermore, the right vertical arrow is a quasi-isomorphism by the Dold-Kan correspondence. 

Thus $\cM\circ \mMC_{\bul}(\vf)$ is also a quasi-isomorphism of chain complexes
and Proposition \ref{prop:Abelian} follows. 
\end{proof}

\subsection{The structure of the proof of Theorem \ref{thm:GM}}
\label{sec:str-proof}

By the definition of weak equivalence of simplicial sets, we have to show that 
\begin{itemize}

\item the map  $\mMC_{\bul}(U)$ induces a bijection
\begin{equation}
\label{pi-0-level}
\pi_0\big( \mMC_{\bul}(L) \big) \to \pi_0\big( \mMC_{\bul}(\ti{L}) \big)
\end{equation}

\item and for every $0$-cell $\al$ in $\mMC_{\bul}(L)$ the map 
$\mMC_{\bul}(U)$ induces group isomorphisms
\begin{equation}
\label{pi-i-level}
\pi_i \big( \mMC_{\bul}(L), \al \big)~ \to ~ \pi_i \big( \mMC_{\bul}(\ti{L}), U_*(\al) \big)\,, 
\qquad \forall~~ i \ge 1\,.
\end{equation}
\end{itemize}

In Section \ref{sec:pi0-level}, we give a detailed proof of the fact that 
map \eqref{pi-0-level} is a bijection of sets. In Section \ref{sec:higher}, 
we apply Proposition \ref{prop:Abelian} to the map of simplicial sets 
$$
\mMC_{\bul}(L / \cF_2 L)  ~\to~   \mMC_{\bul}(\ti{L} / \cF_2 \ti{L})
$$
and then prove, by induction on $n$, that $\mMC_{\bul}(U)$ induces an isomorphism 
of groups   
$$
\pi_i \big( \mMC_{\bul}(L / \cF_n L), 0 \big)~ \to ~ \pi_i \big( \mMC_{\bul}(\ti{L}/ \cF_n \ti{L} ), 0 \big)
$$
for every $i\ge 1$ and $n \ge 2$\,.

Then technical Lemma \ref{lem:zero-base-pt} implies that the corresponding map 
$$
\pi_i \big( \mMC_{\bul}(L / \cF_n L), \al \big)~ \to ~ \pi_i \big( \mMC_{\bul}(\ti{L}/ \cF_n \ti{L} ), U_*(\al) \big)
$$ 
is an isomorphism of groups for every $\al \in \MC(L)$, $i \ge 1$, and $n \ge 2$. 

Combining this statement with the results of Section \ref{sec:pi0-level} applied to 
the corresponding $\infty$-morphism of quotients 
$$
L / \cF_n L ~\to~ \ti{L} / \cF_n \ti{L}
$$
we conclude that $\mMC_{\bul}(U)$ induces a weak equivalence of simplicial sets
$$
\mMC_{\bul}(L / \cF_n L) ~\to~ \mMC_{\bul} (\ti{L} / \cF_n \ti{L})
$$
for all $n \ge 2$. 

Finally, we deduce Theorem \ref{thm:GM} from standard facts about 
maps of towers of simplicial sets from \cite[Section VI]{Goerss-Jardine}. 

Let us remark that the explicit constructions given in the proof of injectivity 
and in the proof of surjectivity of \eqref{pi-0-level} have an independent value. 
These constructions may be used in establishing various properties of
homotopy algebraic structures or $\infty$-morphisms of homotopy algebras 
\cite{HAform}, \cite{notes}, \cite{LV} for which only the existence statements are proved.

\section{$\mMC_{\bul}(U)$ induces a bijection on the level of $\pi_0$}
\label{sec:pi0-level}

We start the proof of the fact that \eqref{pi-0-level} is a bijection 
with two obvious observations. 
Since the linear term $\vf$ of $U$ induces quasi-isomorphism \eqref{vf-cF-n}
for every $n$, $\vf$ induces quasi-isomorphisms 
\begin{equation}
\label{quotient-n}
L \big/ \cF_n L ~\to~ \ti{L} \big/ \cF_n \ti{L}  
\end{equation}
\begin{equation}
\label{quotient-n1}
\cF_n L \big/ \cF_{n+1} L ~\to~ \cF_n \ti{L} \big/ \cF_{n+1} \ti{L}  
\end{equation}
for every $n$.

\subsection{Map \eqref{pi-0-level} is injective}
\label{sec:injective}
To prove that map \eqref{pi-0-level}  induced by $U_*$ is injective, we denote by 
$\al$ and $\al'$ MC elements of $L$ for which $U_*(\al)$ and $U_*(\al')$ 
are connected by a $1$-cell in  $\mMC_{\bul}(\ti{L})$. 
From this, we will construct a sequence of MC elements in $L$ 
\begin{equation}
\label{al-n-seq}
\{\alpha^{(n)}\}_{n \geq 1}\,, 
\end{equation}
a sequence of rectified $1$-cells 
\begin{equation}
\label{rho-n-seq}
\big\{   \rho^{(n)}  = \rho^{(n)}_0(t_0) + d t_0  \rho^{(n)}_1   \big\}_{n \geq 1}
\end{equation}
in $\mMC_{\bul}(L)$ and a sequence of rectified $1$-cells 
\begin{equation}
\label{ti-beta-n-seq}
\big\{  \ti{\beta}^{(n)}  = \ti{\beta}^{(n)}_0(t_0) + d t_0  \ti{\beta}^{(n)}_1     \big\}_{n \geq 1}
\end{equation}
in $\mMC_{\bul}(\ti{L})$ satisfying the following properties: 
\begin{equation}
\label{al-n-eq-1}
\al^{(1)} = \al\,, 
\end{equation}
\begin{equation}
\label{al-n-to-al-pr}
\al' - \al^{(n)} \in \cF_n L\,, 
\end{equation}
\begin{equation}
\label{rho-n-1}
\rho^{(n)}_1 \in  \cF_{n} L\,, 
\end{equation}
\begin{equation}
\label{rho-n-begin-end}
\rho^{(n)}_0(0) = \al^{(n)}\,, \qquad 
\rho^{(n)}_0(1) = \al^{(n+1)}\,,
\end{equation}
\begin{equation}
\label{ti-beta-n-1}
\ti{\beta}^{(n)}_1 \in  \cF_{n} \ti{L}\,, 
\end{equation}
and 
\begin{equation}
\label{ti-beta-n-begin-end}
\ti{\beta}^{(n)}_0(0) = U_*(\al^{(n)})\,, \qquad 
\ti{\beta}^{(n)}_0(1) = U_*(\al')\,.
\end{equation}

Then, by concatenating together the infinite sequence of $1$-cells  $\rho^{(n)}$, we will obtain a
1-cell in $\mMC_{\bul}(L)$ connecting $\alpha$ with $\alpha'$.

\subsubsection{The inductive construction of sequences \eqref{al-n-seq}, \eqref{rho-n-seq}, and \eqref{ti-beta-n-seq}}

Since  $U_*(\al)$ and $U_*(\al')$ are connected by a $1$-cell in  $\mMC_{\bul}(\ti{L})$,
Lemma \ref{lem:rectify} from Appendix \ref{app:rectify} implies that there exists 
a rectified $1$-cell 
\begin{equation}
\label{ti-beta-1}
 \ti{\beta}^{(1)}  = \ti{\beta}^{(1)}_0(t_0) + d t_0  \ti{\beta}^{(1)}_1 
\end{equation}
such that 
\begin{equation}
\label{ti-beta-1-0}
\ti{\beta}^{(1)}_0(0) = U_*(\al)\,,
\end{equation}
and
\begin{equation}
\label{ti-beta-1-1}
\ti{\beta}^{(1)}_0(1) = U_*(\al')\,.
\end{equation}
The desired condition  $\ti{\beta}^{(1)}_1 \in \cF_1 \ti{L}$ is satisfied 
automatically since $\ti{L} =  \cF_1 \ti{L}$. 

Thus we take $\al^{(1)} := \al$ as the base of our induction and assume that 
we already constructed MC elements 
$$
\al^{(1)}, \al^{(2)}, \dots, \al^{(n)} \,, 
$$ 
$1$-cells 
$$
\rho^{(1)}, \rho^{(2)}, \dots, \rho^{(n-1)}  
$$
in $\mMC_{\bul}(L)$ and $1$-cells 
$$
\ti{\beta}^{(1)}, \ti{\beta}^{(2)}, \dots, \ti{\beta}^{(n)}  
$$
in $\mMC_{\bul}(\ti{L})$ satisfying all the desired properties. 

Our goal is to construct a rectified $1$-cell 
\begin{equation}
\label{rho-n}
\rho^{(n)}  = \rho^{(n)}_0(t_0) + d t_0  \rho^{(n)}_1
\end{equation}
in $\mMC_{\bul}(L)$ and a rectified $1$-cell
\begin{equation}
\label{ti-beta-n1}
\ti{\beta}^{(n+1)}  = \ti{\beta}^{(n+1)}_0(t_0) + d t_0  \ti{\beta}^{(n+1)}_1
\end{equation}
such that $\rho^{(n)}_1 \in \cF_n L$, $\ti{\beta}^{(n+1)}_1 \in  \cF_{n+1} \ti{L}$,
\begin{equation}
\label{rho-n-0}
\rho^{(n)}_0(0) = \al^{(n)}\,,
\end{equation}
\begin{equation}
\label{the-end-better}
\al' -  \rho^{(n)}_0(1) \in  \cF_{n+1} L\,,
\end{equation}
\begin{equation}
\label{ti-beta-n1-0}
\ti{\beta}^{(n+1)}_0(0) = U_*\big( \rho^{(n)}_0(1) \big)
\end{equation}
and 
\begin{equation}
\label{ti-beta-n1-1}
\ti{\beta}^{(n+1)}_0(1) = U_*\big( \al'  \big)\,.
\end{equation}
Then setting $\al^{(n+1)} : =  \rho^{(n)}_0(1)$ would complete the inductive step. 

Diagram \eqref{diag-induction} below shows all the links between $0$-cells and $1$-cells
under consideration.
The elements which should be constructed in the inductive step are shown in blue. 
\begin{equation}
\label{diag-induction}
\begin{tikzpicture}
\matrix (m) [matrix of math nodes, row sep=3em, column sep=1em]
{ \al = \al^{(1)} &  \al^{(2)}   &  \al^{(3)} &\cdots &  \al^{(n-1)} &  \al^{(n)}  & \textcolor{blue}{ \al^{(n+1)} }   \\
  U_* (\al^{(1)}) &   ~ & U_* (\al^{(2)})  &  \cdots &  U_*(\al^{(n-1)}) & U_*( \al^{(n)})  & U_*(\textcolor{blue}{ \al^{(n+1)} })  \\ 
 ~ &~  & ~ & U_*(\al')\,. & ~ & ~ & ~ \\  };
\path[->, font=\scriptsize]
(m-1-1) edge  node[above] {$\rho^{(1)}$} (m-1-2)   (m-1-2)  edge node[above] {$\rho^{(2)}$}
(m-1-3)   (m-1-5)  edge node[above] {$\rho^{(n-1)}$}
(m-1-6)   (m-1-6)  edge node[above] {$\textcolor{blue}{ \rho^{(n)} }$} (m-1-7)
(m-2-1)  edge  node[above] {$~~\ti{\beta}^{(1)}$} (m-3-4) 
(m-2-3)  edge  node[above] {$~~~~\ti{\beta}^{(2)}$} (m-3-4)
(m-2-5)  edge  node[above] {$\ti{\beta}^{(n-1)}~~~~$} (m-3-4)
(m-2-6)  edge  node[above] {$\ti{\beta}^{(n)}~$} (m-3-4)
(m-2-7)  edge  node[below] {$\textcolor{blue} {~~\ti{\beta}^{(n+1)}}$} (m-3-4);
\end{tikzpicture}
\end{equation}

Since both $\al^{(n)}$ and $\al'$ satisfy the MC equation
and $\al' - \al^{(n)} \in \cF_n L$\,, equation \eqref{curv-sum} implies that 
the difference 
\begin{equation}
\label{al-pr-minus-al-n}
\al' - \al^{(n)}
\end{equation}
represents a cocycle in the quotient complex 
$$
\cF_n L / \cF_{n+1} L\,.
$$

Let us prove that 
\begin{claim}
\label{cl:diffenerence-exact}
The difference in \eqref{al-pr-minus-al-n} represents 
a coboundary in $\cF_n L / \cF_{n+1} L$\,. 
\end{claim}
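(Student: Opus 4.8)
The plan is to transport the coboundary information from $\ti L$ back to $L$ using the quasi-isomorphism \eqref{quotient-n1}. First I would unpack what the hypothesis gives us: by \eqref{ti-beta-n-begin-end}, the rectified $1$-cell $\ti\beta^{(n)}$ connects $U_*(\al^{(n)})$ to $U_*(\al')$, and by \eqref{ti-beta-n-1} its ``$dt_0$-component'' $\ti\beta^{(n)}_1$ lies in $\cF_n\ti L$. Evaluating the $1$-cell equation \eqref{diff-beta-0} for $\ti\beta^{(n)}$, i.e. $\frac{d}{dt_0}\ti\beta^{(n)}_0(t_0) = \pa^{\ti\beta^{(n)}_0(t_0)}\ti\beta^{(n)}_1$, and integrating from $0$ to $1$, expresses $U_*(\al') - U_*(\al^{(n)})$ as $\pa \xi + (\text{higher }\cF\text{-order terms})$ for a suitable $\xi \in \cF_n \ti L$ built from $\ti\beta^{(n)}_1$; more precisely, working modulo $\cF_{n+1}\ti L$ and using \eqref{curv-sum} together with the fact that on the associated graded the twisted differential $\pa^{\ti\beta^{(n)}_0(t_0)}$ reduces to $\pa$, we get that $U_*(\al') - U_*(\al^{(n)})$ is a coboundary in $\cF_n\ti L / \cF_{n+1}\ti L$.

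Next I would relate $U_*(\al') - U_*(\al^{(n)})$ in $\cF_n\ti L/\cF_{n+1}\ti L$ to $\vf(\al' - \al^{(n)})$. Since $\al' - \al^{(n)} \in \cF_n L$, formula \eqref{U-star} for $U_*$, combined with the filtration-compatibility \eqref{U-w-filtration}, shows that all the nonlinear terms $\frac{1}{m!}U'(\text{monomials in }\al,\al')$ that contribute to $U_*(\al') - U_*(\al^{(n)})$ involve at least one factor from $\cF_n L$ together with at least one other factor from $\cF_1 L$, hence land in $\cF_{n+1}\ti L$. Therefore, modulo $\cF_{n+1}\ti L$, $U_*(\al') - U_*(\al^{(n)}) \equiv \vf(\al' - \al^{(n)})$. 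Combining with the previous paragraph, $\vf(\al'-\al^{(n)})$ is a coboundary in $\cF_n\ti L/\cF_{n+1}\ti L$. Now invoke that \eqref{quotient-n1} is a quasi-isomorphism $\cF_n L/\cF_{n+1}L \to \cF_n\ti L/\cF_{n+1}\ti L$: since $\al'-\al^{(n)}$ is a cocycle in $\cF_n L/\cF_{n+1}L$ (already noted in the text) whose image under $\vf$ is a coboundary, and $\vf$ is injective on cohomology, $\al'-\al^{(n)}$ must itself be a coboundary in $\cF_n L/\cF_{n+1}L$. This is exactly Claim \ref{cl:diffenerence-exact}.

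The main obstacle I expect is the bookkeeping in the first paragraph: one has to carefully justify that integrating the twisted-differential equation \eqref{diff-beta-0} over the $1$-cell produces a genuine coboundary for the \emph{untwisted} differential $\pa$ on the associated graded $\cF_n\ti L/\cF_{n+1}\ti L$, rather than just for $\pa^{\ti\beta^{(n)}_0(t_0)}$. The point is that the correction terms $\sum_{k\ge 1}\frac{1}{k!}\{\ti\beta^{(n)}_0(t_0),\ldots,\ti\beta^{(n)}_0(t_0),\ti\beta^{(n)}_1\}_{k+1}$ each contain a factor $\ti\beta^{(n)}_1 \in \cF_n\ti L$ together with at least one factor $\ti\beta^{(n)}_0(t_0) \in \cF_1\ti L$, hence lie in $\cF_{n+1}\ti L$ and vanish on the associated graded; this also disposes of the $t_0$-dependence, since on the associated graded only the constant term in $t_0$ of the relevant expression survives and integration against $dt_0$ is harmless. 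A clean way to organize all of this is to pass first to the quotients $L/\cF_{n+1}L$ and $\ti L/\cF_{n+1}\ti L$ (legitimate, since everything in sight respects the filtration) where $\cF_n$ becomes a square-zero ideal, so that twisting by any MC element acts trivially on $\cF_n$ and the twisted differential restricted to $\cF_n$ is just $\pa$; the claim then becomes the elementary statement that a path of MC elements in a filtered $\sLie$-algebra with square-zero top ideal has endpoints differing by a $\pa$-coboundary in that ideal.
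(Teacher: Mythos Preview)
Your proposal is correct and follows essentially the same route as the paper: integrate the $1$-cell equation for $\ti\beta^{(n)}$ to see that $U_*(\al') - U_*(\al^{(n)}) \equiv \pa\ti\beta^{(n)}_1 \pmod{\cF_{n+1}\ti L}$, observe that $U_*(\al') - U_*(\al^{(n)}) \equiv \vf(\al'-\al^{(n)}) \pmod{\cF_{n+1}\ti L}$ by filtration-compatibility of $U$, and then invoke the quasi-isomorphism \eqref{quotient-n1}. Your discussion of the ``obstacle'' is exactly the computation the paper carries out in equation \eqref{U-al-pr-U-al-n}.
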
 
\begin{proof}
Let us consider the $1$-cell $\ti{\beta}^{(n)}$ in $\mMC_{\bul}(\ti{L})$ which connects $U_*(\al^{(n)})$ 
to $U_*(\al')$\,.
  
The MC equation for $\ti{\beta}^{(n)}$ is equivalent to the pair of equations
\begin{equation}
\label{MC-beta-0}
\curv( \ti{\beta}^{(n)}_0(t)) = 0
\end{equation}
and 
\begin{equation}
\label{beta-0-diff-eq}
\frac{d}{d t} \ti{\beta}^{(n)}_0(t) = \pa^{\ti{\beta}^{(n)}_0(t)} \ti{\beta}^{(n)}_1\,,
\end{equation}
where $\pa^{\ti{\beta}^{(n)}_0(t)}$ denotes the differential twisted by $\ti{\beta}^{(n)}_0(t)$
(see Section \ref{sec:twisting}).

Integrating both sides of \eqref{beta-0-diff-eq}, we deduce that
\begin{multline}
\label{U-al-pr-U-al-n}
U_*(\al') - U_*(\al^{(n)}) = \int_0^1  d t\,  \pa^{\ti{\beta}^{(n)}_0(t)} \ti{\beta}^{(n)}_1  \\
 = \pa \ti{\beta}^{(n)}_1   + \sum_{m \ge 1} \frac{1}{m!} \int_0^1  d t \{\ti{\beta}^{(n)}_0(t), \dots,  \ti{\beta}^{(n)}_0(t),   \ti{\beta}^{(n)}_1\}_{m+1}\,. 
\end{multline}
Thus the difference $U_*(\al') - U_*(\al^{(n)})$ represents a coboundary in the 
quotient 
$$
\cF_n \ti{L} / \cF_{n+1} \ti{L} \,.
$$

On the other hand, 
\begin{equation}
\label{vf-al-pr-vf-al-n}
U_*(\al') - U_*(\al^{(n)})  - \vf(\al' - \al^{(n)}) \in \cF_{n+1} \ti{L}\,. 
\end{equation}
Therefore, since $\vf$ induces quasi-isomorphism 
\eqref{quotient-n1},  $\al' - \al^{(n)}$ indeed 
represents a coboundary in $\cF_n L / \cF_{n+1} L$\,.
\end{proof}

Claim \ref{cl:diffenerence-exact} implies that there exists 
\begin{equation}
\label{n-rho-1}
\rho^{(n)}_1 \in \cF_n L 
\end{equation}
such that
\begin{equation}
\label{al-pr-al-rho}
\al'- \al^{(n)} - \pa \rho^{(n)}_1 \in \cF_{n+1} L\,. 
\end{equation}

Let us denote by $\rho^{(n)}$ the $1$-cell
\begin{equation}
\label{rho-n-1-cell}
\rho^{(n)} = \rho^{(n)}_0 (t) + d t \rho^{(n)}_1\,, 
\end{equation}
where $\rho^{(n)}_0(t)$ is the unique solution to this integral equation: 
\begin{equation}
\label{rho-n-0-t}
\rho^{(n)}_0(t) = \al^{(n)} + \int_0^t d t_1 \pa^{\rho^{(n)}_0(t_1)} (\rho^{(n)}_1)\,.
\end{equation}

It is easy to see that the new MC element 
\begin{equation}
\label{al-n1}
\al^{(n+1)} : = \rho^{(n)}_0 (1)
\end{equation}
satisfies the condition 
$$
\al^{(n+1)} - \al^{(n)} - \pa \rho_1 \in \cF_{n+1} L
$$
and hence 
\begin{equation}
\label{al-pr-minus-al-n1}
\al' - \al^{(n+1)} \in  \cF_{n+1} L\,.
\end{equation}

We claim that 
\begin{claim}
\label{cl:rho-1-choice}
The element $\rho^{(n)}_1$ in \eqref{n-rho-1} can be chosen 
in such a way that 
\begin{equation}
\label{tibeta1-vf-rho1}
\ti{\beta}^{(n)}_1 - \vf(\rho^{(n)}_1) - \pa (\ti{\ga}) \in \cF_{n+1} \ti{L}
\end{equation}
for some $\ti{\ga} \in \cF_n \ti{L}$\,. 
Here  $\ti{\beta}^{(n)}_1$ is the degree $-1$ component of 
the $1$-cell $\ti{\beta}^{(n)}$ which connects $U_*(\al^{(n)})$ to 
$U_*(\al')$\,.
\end{claim}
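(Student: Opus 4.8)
The plan is to start from the element $\rho^{(n)}_1 \in \cF_n L$ supplied by Claim~\ref{cl:diffenerence-exact}, which already satisfies \eqref{al-pr-al-rho}, and then correct it by an appropriate cocycle so that \eqref{tibeta1-vf-rho1} holds while \eqref{al-pr-al-rho} is preserved. The key external input is that $\vf$ induces the quasi-isomorphism \eqref{quotient-n1}, hence an isomorphism $\vf_* : H^{\bul}(\cF_n L/\cF_{n+1}L) \xto{\sim} H^{\bul}(\cF_n\ti L/\cF_{n+1}\ti L)$; in particular $\vf_*$ is surjective in degree $-1$.

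First I would show that the degree $-1$ element $\ti{\beta}^{(n)}_1 - \vf(\rho^{(n)}_1)$ is a cocycle in the quotient complex $\cF_n\ti L/\cF_{n+1}\ti L$. Integrating \eqref{beta-0-diff-eq} as in \eqref{U-al-pr-U-al-n} and using that $\ti{\beta}^{(n)}_1 \in \cF_n\ti L$ while $\ti{\beta}^{(n)}_0(t) \in \cF_1\ti L \hotimes \bbk[t]$, each bracket term $\{\ti{\beta}^{(n)}_0(t),\dots,\ti{\beta}^{(n)}_0(t),\ti{\beta}^{(n)}_1\}_{m+1}$ with $m \ge 1$ lies in $\cF_{m+n}\ti L \subseteq \cF_{n+1}\ti L$, so $\pa\ti{\beta}^{(n)}_1 \equiv U_*(\al') - U_*(\al^{(n)}) \pmod{\cF_{n+1}\ti L}$. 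On the other hand $\pa\vf(\rho^{(n)}_1) = \vf(\pa\rho^{(n)}_1) \equiv \vf(\al' - \al^{(n)}) \pmod{\cF_{n+1}\ti L}$ by \eqref{al-pr-al-rho} together with the filtration compatibility of $\vf$ (the $m=1$ case of \eqref{U-w-filtration}). Subtracting and invoking \eqref{vf-al-pr-vf-al-n} yields $\pa\big(\ti{\beta}^{(n)}_1 - \vf(\rho^{(n)}_1)\big) \in \cF_{n+1}\ti L$, as needed.

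Next, since $\vf_*$ is surjective on $H^{-1}$, I would choose a cocycle $c \in \cZ^{-1}(\cF_n L/\cF_{n+1}L)$ with $\vf_*[c] = \big[\ti{\beta}^{(n)}_1 - \vf(\rho^{(n)}_1)\big]$, lift it to some $\wt{c} \in \cF_n L$ of degree $-1$ (so $\pa\wt{c} \in \cF_{n+1}L$), and replace $\rho^{(n)}_1$ by $\rho^{(n)}_1 + \wt{c}$. Because $\wt{c}$ is a cocycle modulo $\cF_{n+1}L$, the new $\rho^{(n)}_1$ still satisfies \eqref{al-pr-al-rho}. By the choice of $c$, the class of $\ti{\beta}^{(n)}_1 - \vf(\rho^{(n)}_1 + \wt{c}) = \big(\ti{\beta}^{(n)}_1 - \vf(\rho^{(n)}_1)\big) - \vf(\wt{c})$ vanishes in $H^{-1}(\cF_n\ti L/\cF_{n+1}\ti L)$, which is exactly the statement that $\ti{\beta}^{(n)}_1 - \vf(\rho^{(n)}_1) - \pa\ti{\ga} \in \cF_{n+1}\ti L$ for some $\ti{\ga} \in \cF_n\ti L$.

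The main obstacle is not conceptual but bookkeeping: one must verify carefully that the filtration-degree counts in the bracket terms really push every correction into $\cF_{n+1}$, and — more importantly — that after replacing $\rho^{(n)}_1$ the previously established property \eqref{al-pr-al-rho} (which will be used to define $\al^{(n+1)}$) is untouched, so that the inductive construction in Section~\ref{sec:injective} is not disturbed. Once the cocycle condition for $\ti{\beta}^{(n)}_1 - \vf(\rho^{(n)}_1)$ is in hand, the surjectivity of $\vf_*$ finishes the argument with no further computation.
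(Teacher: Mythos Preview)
Your proposal is correct and follows essentially the same route as the paper: both arguments first show that $\ti{\beta}^{(n)}_1 - \vf(\rho^{(n)}_1)$ is a cocycle in $\cF_n\ti{L}/\cF_{n+1}\ti{L}$ (your filtration count for the bracket terms is exactly the content of \eqref{U-al-pr-U-al-n}), then use surjectivity of $\vf_*$ on $H^{-1}$ to produce a correction term $\wt{c}\in\cF_n L$ with $\pa\wt{c}\in\cF_{n+1}L$ (the paper calls it $\rho'$), and finally replace $\rho^{(n)}_1$ by $\rho^{(n)}_1+\wt{c}$ while noting that \eqref{al-pr-al-rho} is undisturbed. The only cosmetic difference is that you phrase the last step via the vanishing of a cohomology class, whereas the paper writes out the resulting inclusion \eqref{ti-beta1-rho1-rho-pr} directly.
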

\begin{proof}
Indeed, due to \eqref{U-al-pr-U-al-n} and inclusion 
\eqref{vf-al-pr-vf-al-n}, we have 
$$
\vf(\al') - \vf(\al^{(n)}) - \pa  \ti{\beta}^{(n)}_1 \in  \cF_{n+1} \ti{L} \,.
$$
 
Combining this inclusion with \eqref{al-pr-al-rho}, we conclude 
that 
$$
\pa(\ti{\beta}^{(n)}_1 -  \vf (\rho^{(n)}_1)) \in \cF_{n+1} \ti{L}. 
$$

Therefore, since $\vf$ induces quasi-isomorphism \eqref{quotient-n1},
there exists $\rho' \in \cF_n L^{-1}$ and $\ti{\ga} \in  \cF_n \ti{L}^{-2}$ such that
\begin{equation}
\label{diff-rho-pr-1}
\pa(\rho') \in \cF_{n+1} L
\end{equation}
and 
\begin{equation}
\label{ti-beta1-rho1-rho-pr}
\ti{\beta}^{(n)}_1 -  \vf (\rho^{(n)}_1) - \vf(\rho') - \pa (\ti{\ga}) \in \cF_{n+1} \ti{L}\,.
\end{equation}

Inclusion \eqref{diff-rho-pr-1} implies that we can safely replace 
$\rho^{(n)}_1$ in \eqref{n-rho-1} without violating inclusion \eqref{al-pr-al-rho}.

Claim \eqref{cl:rho-1-choice} is proved.
\end{proof}

Thus we constructed the desired ``blue'' elements $\rho^{(n)}$ and 
$\al^{(n+1)}$ in diagram \eqref{diag-induction}.

Let us now consider the $1$-cell
\begin{equation}
\label{U-star-rho-n}
\ti{\rho}^{(n)}  = \ti{\rho}^{(n)}_0(t) + d t \ti{\rho}^{(n)}_1(t) : = U_* \big(\rho^{(n)}_0(t) + d t \rho^{(n)}_1\big)  
\end{equation}
in $\mMC_{\bul}(\ti{L})$ connecting $U_*(\al^{(n)})$ to $U_*(\al^{(n+1)})$.

Combining \eqref{U-star-rho-n} with the $1$-cell $\ti{\beta}^{(n)}$ in $\mMC_{\bul}(\ti{L})$ 
we get the $1$-dimensional horn in $\mMC_{\bul}(\ti{L})$ shown on figure \ref{1-dim-horn}.
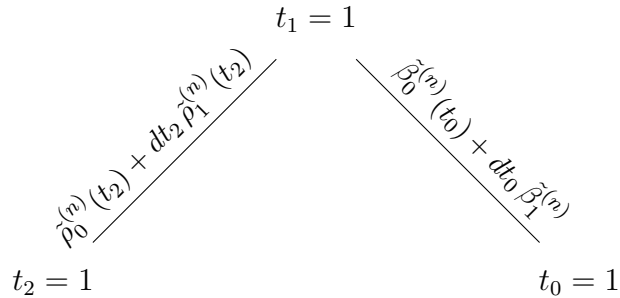
\begin{figure}[htp]
\centering 
\begin{tikzpicture}[scale=0.5]
\tikzstyle{vert}=[circle, minimum size=3, inner sep=4]
\node[vert] (v1) at (0, 7) {{$t_1=1$}};
\node[vert] (v0) at (7, 0) {{$t_0 =1$}};
\node[vert] (v2) at (-7, 0) {{$t_2=1$}};
\draw (v1) edge (v0);
\draw (4.4,3.7) node[anchor=center, rotate = -45] {{\small $\ti{\beta}^{(n)}_0(t_0) + d t_0 \, \ti{\beta}^{(n)}_1 $}};
\draw (v1) edge (v2);
\draw (-4.4,3.7) node[anchor=center, rotate = 45] {{\small $\ti{\rho}^{(n)}_0(t_2) + d t_2  \, \ti{\rho}^{(n)}_1(t_2)$}};
\end{tikzpicture}
\caption{The horn involving the $1$-cells $\ti{\beta}^{(n)}$ and  $\ti{\rho}^{(n)}$} 
\label{1-dim-horn}
\end{figure}

We claim that
\begin{prop}
\label{prop:filling-horn}
There exists a MC element 
$$
\ti{\eta} \in \ti{L} \hotimes \Om_2 
$$
which fills in the horn shown on figure \ref{1-dim-horn} and such that 
the component $ \ti{\eta}'_1(t_0)$ of 
\begin{equation}
\label{del-1-ti-eta}
\ti{\eta}'_0(t_0) + d t_0 \ti{\eta}'_1(t_0) : = 
\ti{\eta} \Big|_{t_1=0} 
\end{equation}
satisfies 
\begin{equation}
\label{eta-pr-1}
\ti{\eta}'_1(t_0) \in \cF_{n+1} \ti{L} \hotimes \bbk[t_0] 
\end{equation}
\end{prop}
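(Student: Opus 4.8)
The plan is to build the filling $\ti{\eta}$ in two stages: first produce it modulo $\cF_{n+1}\ti{L}$ by an explicit formula, then lift that construction to a genuine element of $\ti{L}\hotimes\Om_2$. The lifting stage rests on the (standard, cf.\ \cite{EnhancedLie}) fact that the canonical projection $q\maps\mMC_{\bul}(\ti{L})\to\mMC_{\bul}(\ti{L}/\cF_{n+1}\ti{L})$ is a Kan fibration: it is the inverse limit of the tower $\{\mMC_{\bul}(\ti{L}/\cF_k\ti{L})\}_k$ whose transition maps are fibrations (their kernels $\cF_k\ti{L}/\cF_{k+1}\ti{L}$ being abelian ideals), and an inverse limit of fibrations is a fibration onto every stage.

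First I would pin down the (rigid) form of the two edges of the horn after reduction modulo $\cF_{n+1}\ti{L}$. Because $\ti{\beta}^{(n)}_1$ and $\vf(\rho^{(n)}_1)$ lie in $\cF_n\ti{L}$, and any multi-bracket with one argument in $\cF_1\ti{L}$ and one in $\cF_n\ti{L}$ lands in $\cF_{n+1}\ti{L}$, all the twisted differentials $\pa^{(\cdot)}$ occurring in \eqref{curv-beta-0}--\eqref{diff-beta-0} reduce to the plain $\pa$ modulo $\cF_{n+1}\ti{L}$ when evaluated on $\cF_n\ti{L}$-valued quantities. Combining this with $\rho^{(n)}_0(0)=\al^{(n)}$, with \eqref{U-al-pr-U-al-n}, and with $\al'-\al^{(n+1)}\in\cF_{n+1}L$ from \eqref{al-pr-minus-al-n1}, one obtains, writing $\xi:=\pa\,\ti{\beta}^{(n)}_1$, that modulo $\cF_{n+1}\ti{L}$
$$
\ti{\beta}^{(n)}\equiv\bigl(U_*(\al^{(n)})+t_0\,\xi\bigr)+d t_0\,\ti{\beta}^{(n)}_1,\qquad\ti{\rho}^{(n)}\equiv\bigl(U_*(\al^{(n)})+t_2\,\xi\bigr)+d t_2\,\vf(\rho^{(n)}_1),
$$
together with $U_*(\al')\equiv U_*(\al^{(n+1)})\equiv U_*(\al^{(n)})+\xi$ and $\pa\xi=0$; and --- this is the only place Claim \ref{cl:rho-1-choice} enters --- $\ti{\beta}^{(n)}_1-\vf(\rho^{(n)}_1)\equiv\pa\,\ti{\ga}$ with $\ti{\ga}\in\cF_n\ti{L}^{-2}$. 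Thus, modulo $\cF_{n+1}\ti{L}$, the two edges have the same $\beta_0$-component and their $\beta_1$-components differ by a coboundary.

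Next I would write down the filling downstairs by hand. Using barycentric coordinates $t_0,t_1,t_2$ on $\Delta^2$ (so $\Om_2=\bbk[t_1,t_2,d t_1,d t_2]$ with $t_0=1-t_1-t_2$) I would take, in $\ti{L}/\cF_{n+1}\ti{L}\hotimes\Om_2$,
$$
\bar{\eta}=\bigl(U_*(\al^{(n)})+(1-t_1)\,\xi\bigr)+\bigl({-}\ti{\beta}^{(n)}_1+t_2\,\pa\ti{\ga}\bigr)d t_1-t_1\,\pa\ti{\ga}\,d t_2+2\,\ti{\ga}\,d t_1\,d t_2
$$
(signs and the coefficient $2$ to be matched to the Koszul conventions in use). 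The point is that modulo $\cF_{n+1}\ti{L}$ every bracket in the Maurer--Cartan equation with two or more arguments among $\{\ti{\beta}^{(n)}_1,\pa\ti{\ga},\ti{\ga},\xi\}$ vanishes (it lies in $\cF_{2n}\ti{L}\subseteq\cF_{n+1}\ti{L}$ since $n\ge1$) and $\pa^{(\cdot)}$ may be replaced by $\pa$; so the MC equation linearizes and, split by form-degree, becomes $\curv\bigl(U_*(\al^{(n)})\bigr)=0$ and $\pa\xi=0$ in degree $0$, $\partial_{t_i}\bigl(U_*(\al^{(n)})+(1-t_1)\xi\bigr)=\pm\pa(\text{coeff.\ of }d t_i)$ in degree $1$, and $\partial_{t_1}(\text{coeff.\ of }d t_2)-\partial_{t_2}(\text{coeff.\ of }d t_1)=\pm\pa(\text{coeff.\ of }d t_1 d t_2)$ in degree $2$ --- each of which holds by the first-stage formulas and $\pa^2=0$. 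One then checks: $\bar{\eta}|_{t_2=0}$ reparametrized by $t_0=1-t_1$ equals $\ti{\beta}^{(n)}$ mod $\cF_{n+1}\ti{L}$; $\bar{\eta}|_{t_0=0}$ reparametrized by $t_2$ (using $d t_1=-d t_2$ and $\ti{\beta}^{(n)}_1-\pa\ti{\ga}\equiv\vf(\rho^{(n)}_1)$) equals $\ti{\rho}^{(n)}$ mod $\cF_{n+1}\ti{L}$; and $\bar{\eta}|_{t_1=0}=\bigl(U_*(\al^{(n)})+\xi\bigr)+0\cdot d t_2$, so the $d t_0$-component of the missing face is zero. Hence $\bar{\eta}$ fills the image in $\mMC_{\bul}(\ti{L}/\cF_{n+1}\ti{L})$ of the horn of Figure \ref{1-dim-horn}, with missing face of vanishing $d t_0$-component.

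Finally, the horn of Figure \ref{1-dim-horn} is a map $\Lambda^2_1\to\mMC_{\bul}(\ti{L})$ (the $1$-cells $\ti{\beta}^{(n)}$ and $\ti{\rho}^{(n)}$ share the vertex $U_*(\al^{(n)})$), and its composite with $q$ is exactly the restriction of $\bar{\eta}$ to $\Lambda^2_1\subset\Delta^2$; since $q$ is a Kan fibration and $\Lambda^2_1\hookrightarrow\Delta^2$ is anodyne, there is a lift $\ti{\eta}\in\MC(\ti{L}\hotimes\Om_2)$ filling the original horn with $q(\ti{\eta})=\bar{\eta}$. Its face $\ti{\eta}|_{t_1=0}=\ti{\eta}'_0(t_0)+d t_0\,\ti{\eta}'_1(t_0)$ reduces modulo $\cF_{n+1}\ti{L}$ to $\bar{\eta}|_{t_1=0}$, whose $d t_0$-component is zero, which forces $\ti{\eta}'_1(t_0)\in\cF_{n+1}\ti{L}\hotimes\bbk[t_0]$, i.e.\ \eqref{eta-pr-1}. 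I expect the main obstacle to be purely the sign and coefficient bookkeeping in verifying that the explicit $\bar{\eta}$ solves the linearized Maurer--Cartan equation and restricts correctly on all three faces, together with stating the fibration property of $q$ precisely; everything else is a routine chase through the filtration conditions. (An alternative to the explicit $\bar{\eta}$ is to construct $\ti{\eta}$ directly by the filtered version of Getzler's recursion from Appendix \ref{app:Ezra-lemma} with a gauge-fixing condition and to track the filtration degree along the recursion, but the reduce-and-lift route seems cleaner to write up.)
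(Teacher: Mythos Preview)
Your argument is correct (up to the sign/coefficient bookkeeping you flag) but proceeds by a genuinely different route than the paper. The paper constructs $\ti{\eta}$ \emph{directly} in $\ti{L}\hotimes\Om_2$ via the bijection of Lemma~\ref{lem:Ezra}: it writes down a stub
\[
\ti{\nu} = (\pa + d)\bigl(\ti{\xi}(t_2) + t_0\,\ti{\beta}^{(n)}_1 + (t_2\,dt_0 - t_0\,dt_2)\,\ti{\ga}\bigr) \in \Stub^1_2(\ti{L}),
\]
runs the recursion \eqref{Ezra-recursion} from $\ti{\eta}^{(0)}=U_*(\al^{(n)})+\ti{\nu}$, and then checks \eqref{eta-pr-1} by tracking filtration degrees through that recursion. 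You instead first pass to $\ti{L}/\cF_{n+1}\ti{L}$, where the MC equation linearizes and a filling $\bar{\eta}$ with vanishing $dt_0$-component on the $t_1=0$ face can be written explicitly, and then lift $\bar{\eta}$ back using that $q\maps\mMC_{\bul}(\ti{L})\to\mMC_{\bul}(\ti{L}/\cF_{n+1}\ti{L})$ is a Kan fibration. The two constructions encode the same data --- your $2$-form term $\ti{\ga}\,dt_1\,dt_2$ plays exactly the role of the paper's $(t_2\,dt_0 - t_0\,dt_2)\,\ti{\ga}$ in the stub --- and both hinge on Claim~\ref{cl:rho-1-choice} in the same way. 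The paper's approach is more explicit and self-contained once Lemma~\ref{lem:Ezra} is in hand; yours is conceptually cleaner in that it isolates the abelian computation from the lifting step, at the cost of invoking the fibration property of $q$ (which is standard but not proved in this paper). Your parenthetical ``alternative'' at the end is in fact precisely what the paper does.
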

\begin{proof}
In this proof we will use the material presented in Appendix \ref{app:Ezra-lemma}. 
For example, we will need the operators $h_1^1$ and $h^1_2$ on    
$\ti{L} \hotimes \Om_1$ and $\ti{L} \hotimes \Om_2$, respectively, and we will need 
Lemma \ref{lem:Ezra}.

According to Lemma \ref{lem:Ezra}, the 1-cell  $\ti{\beta}^{(n)}$ is uniquely determined
by $\ti{\beta}^{(n)}_0(0) = U_*(\al^{(n)})$ and its ``stub''
\begin{equation}
\label{beta-stub}
(\pa + d) h_1^1 (\ti{\beta}^{(n)})  = (\pa + d) (t_0 \ti{\beta}^{(n)}_1)\,.  
\end{equation}
  
Similarly, $1$-cell \eqref{U-star-rho-n} is uniquely determined by $\ti{\rho}^{(n)}_0(0) = U_*(\al^{(n)})$
and its ``stub''
\begin{equation}
\label{ti-rho-stub}
(\pa + d) (\ti{\xi}(t_0)) \,,  
\end{equation}
where 
\begin{equation}
\label{ti-xi-from-ti-rho}
\ti{\xi}(t_0) : = h_1^1 ( \ti{\rho}^{(n)} ) \in \ti{L} \hotimes \bbk[t_0]\,. 
\end{equation}

Since $U$ is compatible with the filtrations and $\rho^{(n)}_1 \in \cF_n L$
$$
\ti{\rho}^{(n)}_1(t_0) - \vf(\rho^{(n)}_1) \in \cF_{n+1} \ti{L}  \hotimes \bbk[t_0]
$$
and hence\footnote{See Remark \ref{rem:rectified}.} 
\begin{equation}
\label{ti-xi-mod-cF-n1}
\ti{\xi}(t_0) - t_0 \vf(\rho^{(n)}_1)  \in \cF_{n+1} \ti{L}  \hotimes \bbk[t_0]\,.
\end{equation}

Our goal is to construct a MC element 
\begin{equation}
\label{ti-eta}
\ti{\eta} \in \ti{L} \hotimes \Om_2 
\end{equation}
such that 
\begin{equation}
\label{ti-eta-ti-beta}
\ti{\eta}  \Big|_{t_2 = 0}  =  \ti{\beta}^{(n)}_0(t_0) + d t_0  \ti{\beta}^{(n)}_1\,,
\end{equation}
\begin{equation}
\label{ti-eta-ti-rho}
\ti{\eta}  \Big|_{t_0 = 0}  =  \ti{\rho}^{(n)}_0(t_2)  +  d t_2 \ti{\rho}^{(n)}_1(t_2)\,,
\end{equation}
and such that condition \eqref{eta-pr-1} is satisfied.

For this purpose, we introduce the following element in\footnote{The notation $\Stub_n^i$ is introduced 
in \eqref{stub} in Appendix \ref{app:Ezra-lemma}.}  $\Stub_2^1(\ti{L})$
\begin{equation}
\label{ti-nu}
\ti{\nu} : = (\pa + d)  \big( \ti{\xi}(t_2) + t_0 \ti{\beta}^{(n)}_1 + (t_2 d t_0 - t_0  d t_2) \ti{\ga} \big)\,,
\end{equation}
where $\ti{\ga}$ is a degree $-2$ element in $\cF_n \ti{L}$
introduced in Claim \ref{cl:rho-1-choice} and $\ti{\xi}$ is defined 
in \eqref{ti-xi-from-ti-rho}. 

It is clear that
$$
\ti{\nu} \Big|_{t_1 = 1} = (\pa + d)  \big( \ti{\xi}(t_2) \big) \Big|_{t_2 = 0} \,.
$$
Furthermore, 
$$
(\pa + d)  \big( \ti{\xi}(t_0) \big) \Big|_{t_0 = 0} = 0
$$
since $(\pa + d)  \big( \ti{\xi}(t_0) \big) $ is the ``stub'' of the $1$-cell $\ti{\rho}^{(n)}$\,.

Thus $\ti{\nu}$ is indeed an element of  $\Stub_2^1(\ti{L})$\,.

Next we set 
\begin{equation}
\label{ti-eta-0}
\ti{\eta}^{(0)} : = U_*(\al^{(n)}) + \ti{\nu} 
\end{equation}
and define $\ti{\eta}$ \eqref{ti-eta} as the limiting element of the 
sequence 
$$
\big\{  \ti{\eta}^{(k)} \big\}_{k \ge 0}
$$ 
defined by the recursive procedure in \eqref{Ezra-recursion}
\begin{equation}
\label{ti-eta-k1}
\ti{\eta}^{(k+1)} : = \ti{\eta}^{(0)} - \sum_{m=2}^{\infty} \frac{1}{m!} \, h^1_2 \, \{ \ti{\eta}^{(k)}, \dots, \ti{\eta}^{(k)}  \}_m\,.
\end{equation}

Conditions \eqref{ti-eta-ti-beta} and \eqref{ti-eta-ti-rho} are satisfied due to Lemma \ref{lem:Ezra}. 

So it remains to prove that 
\begin{equation}
\label{in-cF-n1}
\ti{\eta}'_1(t_0) \in \cF_{n+1} \ti{\cL} \hotimes \bbk[t_0]\,, 
\end{equation}
where $\ti{\eta}'_1(t_0)$ is the degree $-1$ component of 
the $1$-cell
$$
\ti{\eta}'_0(t_0) + d t_0 \ti{\eta}'_1(t_0) : = 
\ti{\eta} \Big|_{t_1=0} \,.
$$

To prove \eqref{in-cF-n1}, we observe that $\ti{\nu} \in  \cF_{n} \ti{\cL}$ and hence 
\begin{equation}
\label{ti-eta-ti-nu}
\ti{\eta} - U_*(\al^{(n)}) - \ti{\nu}  \in  \cF_{n+1} \ti{\cL} \hotimes \Om_2
\end{equation}

On the other hand, 
$$
\ti{\nu} \Big|_{t_1 = 0} = 
(\pa + d)  \big( \ti{\xi}(1-t_0) + t_0 \ti{\beta}^{(n)}_1 +  d t_0  \ti{\ga} \big)\,.
$$
Therefore, using \eqref{ti-xi-mod-cF-n1}, we get 
$$
\ti{\nu} \Big|_{t_1 = 0} -  \Big( \, t_0 \big( \pa \ti{\beta}^{(n)}_1 -  \pa \vf(\rho^{(n)}_1) \big) 
+ d t_0 \big( \ti{\beta}^{(n)}_1 - \vf(\rho^{(n)}_1)  -  \pa \ti{\ga} \big) \, \Big) \in   \cF_{n+1} \ti{\cL} \hotimes \Om_1\,.
$$

Hence 
$$
\ti{\eta}'_1(t_0) -  \big( \ti{\beta}^{(n)}_1 - \vf(\rho^{(n)}_1)  -  \pa \ti{\ga} \big) \in \cF_{n+1} \ti{\cL} \hotimes \bbk[t_0]\,.
$$

Combining the latter inclusion with \eqref{tibeta1-vf-rho1} we immediately deduce 
the desired inclusion $\ti{\eta}'_1(t_0) \in \cF_{n+1} \ti{\cL} \hotimes \bbk[t_0]$\,.

Proposition \ref{prop:filling-horn} is proved. 
\end{proof}

The existence of the desired $1$-cell $\ti{\beta}^{(n+1)}$ in diagram \eqref{diag-induction} 
follows immediately from Proposition \ref{prop:filling-horn} and Lemma \ref{lem:rectify} from Appendix 
\ref{app:rectify}. 

Thus desired sequences  \eqref{al-n-seq}, \eqref{rho-n-seq}, and \eqref{ti-beta-n-seq} are constructed. 

Since the sequence of $1$-cells \eqref{rho-n-seq} satisfies \eqref{rho-n-1} and \eqref{rho-n-begin-end}, 
Lemma \ref{lem:con-ing} from Appendix \ref{app:con-ing} implies that there exists a $1$-cell 
$\ga$ in $\mMC_{\bul}(L)$ which connects $\al=\al^{(1)}$ to the limiting MC element $\al_{lim}$ of 
sequence \eqref{al-n-seq}. 

On the other hand, condition \eqref{al-n-to-al-pr} implies that $\al_{lim} = \al'$.
 
Thus the $0$-cells $\al, \al' \in \mMC_0(L)$ are indeed connected by a $1$-cell.  
The injectivity of map \eqref{pi-0-level} is proved.  

\subsection{Map \eqref{pi-0-level} is surjective}
\label{sec:surjective}

To establish that map \eqref{pi-0-level} is surjective, we will
prove inductively the following proposition:
\begin{prop}
\label{prop:tower-alphas}
For every MC element $\ti{\al}$ in $\ti{L}$, there exists a sequence of 
degree zero elements
\begin{equation}
\label{alalal}
\{ \al^{(n)} \}_{n \ge 0} ~\subset~  L  
\end{equation}
and a sequence of rectified $1$-cells in $\mMC_{\bul}(\ti{L})$ 
\begin{equation}
\label{ti-bebebe}
\ti{\beta}^{(n)} = \ti{\beta}^{(n)}_0(t_0) + d t_0\, \ti{\beta}^{(n)}_1
\end{equation}
such that for every $n \ge 0$
\begin{equation}
\label{curv-al-n}
\curv(\al^{(n)}) \in \cF_{n+2} L\,, 
\end{equation}

\begin{equation}
\label{al-n1-al-n}
\al^{(n+1)} - \al^{(n)} \in \cF_{n+1} L\,, 
\end{equation}

\begin{equation}
\label{begin}
\ti{\beta}^{(n)}_0(0) = \ti{\al}\,,
\end{equation}

\begin{equation}
\label{tibeta-diff-nce}
\ti{\beta}^{(n+1)} - \ti{\beta}^{(n)} \in \cF_{n+1} \ti{L} \hotimes \Om_1
\end{equation}
and 
\begin{equation}
\label{tibeta-n-al-n}
\ti{\beta}^{(n)}_0(1) - U_*(\al^{(n)})  \in \cF_{n+1} \ti{L}\,.
\end{equation}
\end{prop}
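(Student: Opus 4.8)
The plan is to construct the two sequences \eqref{alalal} and \eqref{ti-bebebe} by induction on $n$, starting from $n=0$. For the base case I would take $\al^{(0)}$ to be a lift to $L^0$ of a degree zero cocycle in $L/\cF_2 L$ whose image under the quasi-isomorphism \eqref{quotient-n} (with $n=2$) represents the same class as $\ti{\al}$ in $\ti{L}/\cF_2 \ti{L}$; more precisely, since $\ti{\al}$ is an MC element, it is in particular a cocycle mod $\cF_2\ti{L}$, and using the quasi-isomorphism \eqref{quotient-n} one finds $\al^{(0)} \in L^0$ with $\curv(\al^{(0)}) \in \cF_2 L$ (so \eqref{curv-al-n} holds for $n=0$) and with $\vf(\al^{(0)}) - \ti{\al} \in \cF_2\ti{L}$ up to a coboundary. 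The $1$-cell $\ti{\beta}^{(0)}$ is then obtained by using Proposition \ref{prop:Abelian}, or rather the elementary surjectivity on $\pi_0$ of $\mMC_\bul$ for the abelian quotient at level $\cF_1/\cF_2$, to connect $\ti{\al}$ to $U_*(\al^{(0)})$ by a $1$-cell which, modulo $\cF_2\ti{L}$, is a path of cocycles; rectifying it via Lemma \ref{lem:rectify} gives $\ti{\beta}^{(0)}$ with $\ti{\beta}^{(0)}_0(0) = \ti{\al}$ and $\ti{\beta}^{(0)}_0(1) - U_*(\al^{(0)}) \in \cF_1\ti{L}$, which is all \eqref{begin} and \eqref{tibeta-n-al-n} ask for at $n=0$.

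\textbf{Inductive step.} Assume $\al^{(0)},\dots,\al^{(n)}$ and $\ti{\beta}^{(0)},\dots,\ti{\beta}^{(n)}$ have been constructed with all the stated properties. The task is to produce $\al^{(n+1)}$ with $\al^{(n+1)} - \al^{(n)} \in \cF_{n+1}L$ and $\curv(\al^{(n+1)}) \in \cF_{n+3}L$, together with $\ti{\beta}^{(n+1)}$ satisfying \eqref{tibeta-diff-nce} and \eqref{tibeta-n-al-n}. First I would use the Bianchi identity \eqref{Bianchi} together with \eqref{curv-al-n}: since $\curv(\al^{(n)}) \in \cF_{n+2}L$, equation \eqref{Bianchi} shows $\pa^{\al^{(n)}}(\curv(\al^{(n)})) \in \cF_{n+3}L$, so $\curv(\al^{(n)})$ represents a cocycle in $\cF_{n+2}L/\cF_{n+3}L$ (the twisting correction terms land in higher filtration by the compatibility of the brackets with the filtration, and $\pa^{\al^{(n)}}$ agrees with $\pa$ mod one extra filtration degree). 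I then need this class to be a coboundary in $\cF_{n+2}L/\cF_{n+3}L$. This is where \eqref{tibeta-n-al-n} and the fact that $\ti{\beta}^{(n)}$ is an \emph{actual} MC element of $\ti{L}\hotimes\Om_1$ come in: integrating the $1$-cell equation \eqref{diff-beta-0} for $\ti{\beta}^{(n)}$ from $t_0=0$ to $t_0=1$ as in \eqref{U-al-pr-U-al-n} shows that $U_*(\al^{(n)}) + (\text{coboundary in }\cF_{n+2}\ti{L}/\cF_{n+3}\ti{L})$ is an MC element of $\ti{L}$ modulo $\cF_{n+3}\ti{L}$, hence $\curv(U_*(\al^{(n)}))$ is a coboundary mod $\cF_{n+3}\ti{L}$; using \eqref{U-star-curv} to relate $\curv(U_*(\al^{(n)}))$ to $\vf(\curv(\al^{(n)}))$ modulo higher filtration, and then the quasi-isomorphism \eqref{quotient-n1} at level $n+2$, one concludes that $\curv(\al^{(n)})$ is a coboundary in $\cF_{n+2}L/\cF_{n+3}L$. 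So there is $\xi \in \cF_{n+1}L^{-1}$ — wait, one needs $\curv$ has degree $1$, so the primitive lives in degree $0$: there is $\gamma^{(n)} \in \cF_{n+1}L^0$ with $\curv(\al^{(n)}) + \pa^{\al^{(n)}}(\gamma^{(n)}) \in \cF_{n+3}L$. Then by the twisted sum formula \eqref{curv-sum}, setting $\al^{(n+1)} := \al^{(n)} + \gamma^{(n)}$ gives $\curv(\al^{(n+1)}) = \curv(\al^{(n)}) + \pa^{\al^{(n)}}(\gamma^{(n)}) + (\text{quadratic and higher in }\gamma^{(n)})$, and since $\gamma^{(n)} \in \cF_{n+1}L$ the quadratic terms lie in $\cF_{2n+2}L \subseteq \cF_{n+3}L$ for $n\ge 1$ (the case $n=0$ needs separate but identical bookkeeping). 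This establishes \eqref{curv-al-n} and \eqref{al-n1-al-n} for $n+1$.

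\textbf{Constructing $\ti{\beta}^{(n+1)}$.} It remains to upgrade the $1$-cell: we have $\ti{\beta}^{(n)}$ connecting $\ti{\al}$ to $\ti{\beta}^{(n)}_0(1)$ with $\ti{\beta}^{(n)}_0(1) - U_*(\al^{(n)}) \in \cF_{n+1}\ti{L}$, and we want $\ti{\beta}^{(n+1)}$ connecting $\ti{\al}$ to something $\cF_{n+2}$-close to $U_*(\al^{(n+1)})$, agreeing with $\ti{\beta}^{(n)}$ mod $\cF_{n+1}\ti{L}\hotimes\Om_1$. The difference $U_*(\al^{(n+1)}) - \ti{\beta}^{(n)}_0(1)$ lies in $\cF_{n+1}\ti{L}$; using $U_*(\al^{(n+1)}) - U_*(\al^{(n)}) \equiv \vf(\gamma^{(n)})$ mod $\cF_{n+2}\ti{L}$ together with the coboundary description from the previous paragraph, one checks this difference is a $\pa$-coboundary mod $\cF_{n+2}\ti{L}$, so it can be ``absorbed'' by prepending a short rectified $1$-cell supported in $\cF_{n+1}\ti{L}$ (built exactly as $\rho^{(n)}$ was in Section \ref{sec:injective}, solving the integral equation \eqref{rho-n-0-t}) and then filling the resulting horn via the Dupont-operator recursion \eqref{Ezra-recursion}, as in the proof of Proposition \ref{prop:filling-horn}; the filling is automatically $\equiv \ti{\beta}^{(n)}$ mod $\cF_{n+1}\ti{L}\hotimes\Om_1$ because the horn data differs from the old data only in $\cF_{n+1}$, giving \eqref{tibeta-diff-nce}, while \eqref{begin} persists because the left face is unchanged. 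Finally rectify the new $1$-cell via Lemma \ref{lem:rectify} to get $\ti{\beta}^{(n+1)}$; \eqref{tibeta-n-al-n} holds for $n+1$ by construction. \textbf{Main obstacle.} I expect the delicate point to be keeping the filtration bookkeeping consistent across the three intertwined gadgets ($\curv$, the $L$-side $1$-cells, the $\ti{L}$-side $1$-cells) — in particular, making sure that when we adjust $\al^{(n)}$ by $\gamma^{(n)}\in\cF_{n+1}L$ the change propagates correctly through $U_*$, through $\curv\circ U_*$ via \eqref{U-star-curv}, and through the horn-filling, so that the new $1$-cell genuinely satisfies \eqref{tibeta-diff-nce} and \eqref{tibeta-n-al-n} simultaneously without requiring a further correction that would break the induction; essentially one must verify that all ``error terms'' that are quadratic-or-higher in the $\cF_{n+1}$-correction genuinely land in $\cF_{n+2}$ or deeper, which uses $2(n+1) \ge n+2$ and hence holds for all $n\ge 0$.
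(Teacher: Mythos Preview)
Your inductive step has a genuine filtration--bookkeeping gap that is not merely a matter of care: the order in which you perform the two corrections (kill $\curv$, then fix the $1$-cell) does not work with the data the inductive hypothesis gives you.

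Concretely, you try to show that $\curv(\al^{(n)})$ is a coboundary in $\cF_{n+2}L/\cF_{n+3}L$ by arguing that $\curv(U_*(\al^{(n)}))$ is a coboundary in $\cF_{n+2}\ti L/\cF_{n+3}\ti L$, using that $U_*(\al^{(n)})$ is close to the genuine MC element $\ti\beta^{(n)}_0(1)$. But \eqref{tibeta-n-al-n} only gives $\delta := \ti\beta^{(n)}_0(1)-U_*(\al^{(n)}) \in \cF_{n+1}\ti L$, one level too coarse. Applying \eqref{curv-sum} yields
\[
\curv\!\big(U_*(\al^{(n)})\big) \;=\; -\,\pa\delta \;-\;\{U_*(\al^{(n)}),\delta\}_2 \;+\; O(\cF_{n+3}\ti L)\,,
\]
and neither $\pa\delta$ (with $\delta\in\cF_{n+1}$, $\pa\delta\in\cF_{n+2}$) nor the bracket term is a priori $\pa$ of something in $\cF_{n+2}\ti L$. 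So nothing forces the class of $\curv(U_*(\al^{(n)}))$ in $\cF_{n+2}/\cF_{n+3}$ to vanish. Your sentence ``$U_*(\al^{(n)}) + (\text{coboundary in }\cF_{n+2}\ti L/\cF_{n+3}\ti L)$ is an MC element modulo $\cF_{n+3}\ti L$'' is precisely what is not available: the coboundary you can produce from the $1$-cell lives in $\cF_{n+1}$, not $\cF_{n+2}$. And even if you patched this, your second step---adjusting the $1$-cell endpoint after having set $\al^{(n+1)}$---would require modifying $\al^{(n+1)}$ by an element of $\cF_{n+1}L$ (to hit the possibly nontrivial class $[\delta]\in H^0(\cF_{n+1}\ti L/\cF_{n+2}\ti L)$), which would in general push $\curv(\al^{(n+1)})$ back up into $\cF_{n+2}\setminus\cF_{n+3}$.

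The paper's proof resolves this by performing the two corrections in the \emph{opposite} order. First it uses the quasi-isomorphism on $\cF_{n+1}/\cF_{n+2}$ to lift the cocycle $\delta$ to some $\ga\in\cF_{n+1}L$ with $\pa\ga\in\cF_{n+2}L$, sets $\al:=\al^{(n)}+\ga$, and simultaneously perturbs $\ti\beta^{(n)}_1$ by a $\ti\xi\in\cF_{n+1}\ti L$ (re-solving the integral equation for $\ti\beta^{(n+1)}_0$). This produces $\ti\beta^{(n+1)}$ with the \emph{improved} endpoint estimate $\ti\beta^{(n+1)}_0(1)-U_*(\al)\in\cF_{n+2}\ti L$. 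Only then, with this one-degree-better data, does the paper run the argument you sketched (Claim~\ref{cl:curv-al-exact}): now $U_*(\al)$ is $\cF_{n+2}$-close to a genuine MC element, so $\curv(U_*(\al))$ is visibly $\pa$ of something in $\cF_{n+2}\ti L$ modulo $\cF_{n+3}\ti L$, and one can transfer this back to $L$ and correct $\al$ by some $\si\in\cF_{n+2}L$ (not $\cF_{n+1}L$) to obtain $\al^{(n+1)}:=\al+\si$. Since $\si\in\cF_{n+2}$, this last step does not disturb the endpoint estimate.

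A minor point: your base case is more elaborate than necessary. Since all the conditions at $n=0$ are vacuous ($\cF_1\ti L=\ti L$, etc.), the paper simply takes $\al^{(0)}=0$ and $\ti\beta^{(0)}$ the constant $1$-cell $\ti\al$.
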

\begin{proof}
For $n=0$ we set and 
$$ 
\al^{(0)} = 0 \qquad \text{and} \qquad
\ti{\beta}^{(0)} : = \ti{\al}\,.
$$

In this case, the only non-trivial condition is \eqref{tibeta-n-al-n}
and it holds because $\ti{L} = \cF_1 \ti{L}$\,.

Let us now assume that we constructed elements
$$
\al^{(0)},  \al^{(1)}, \dots,  \al^{(n)}
$$
and 
$$
\ti{\beta}^{(0)}, \ti{\beta}^{(1)}, \dots, \ti{\beta}^{(n)}
$$
with the desired properties. 

Our goal is to construct a degree zero element 
$\al^{(n+1)} \in L$ and a MC element 
$$
\ti{\beta}^{(n+1)} = \ti{\beta}^{(n+1)}_0(t_0) + d t_0\, \ti{\beta}^{(n+1)}_1 ~ \in ~ \ti{L} \hotimes  \Om_1   
$$
such that  \eqref{al-n1-al-n} and \eqref{tibeta-diff-nce} hold, 
\begin{equation}
\label{curv-al-n1}
\curv(\al^{(n+1)}) \in \cF_{n+3} L\,, 
\end{equation} 
\begin{equation}
\label{begin-n1}
\ti{\beta}^{(n+1)}_0(0) = \ti{\al}\,,
\end{equation}
and 
\begin{equation}
\label{tibeta-n1-al-n1}
\ti{\beta}^{(n+1)}_0(1) - U_*(\al^{(n+1)})  \in \cF_{n+2} \ti{L}\,.
\end{equation}

Equation \eqref{U-star-curv} and inclusion \eqref{curv-al-n} imply that 
$$
\curv \big(  U_*(\al^{(n)}) \big) \in \cF_{n+2} \ti{L}\,.
$$
Hence, using  \eqref{curv-sum}, \eqref{tibeta-n-al-n} and the fact 
that $\ti{\beta}^{(n)}$ is a MC element of $\ti{L} \hotimes  \Om_1$
we conclude that
\begin{equation}
\label{tibeta-n-U-al-n}
\ti{\beta}^{(n)}_0(1) - U_*(\al^{(n)})
\end{equation}
represents a cocycle in the quotient complex 
$$
\cF_{n+1} \ti{L} ~\big/~ \cF_{n+2} \ti{L}\,.
$$

Since the map $\vf$ induces quasi-isomorphism \eqref{quotient-n1}, 
there exists a degree $0$ element $\ga \in \cF_{n+1} L$ and a degree $-1$ element 
$\ti{\xi} \in \cF_{n+1} \ti{L}$ such that 
\begin{equation}
\label{diff-ga}
\pa(\ga) \in  \cF_{n+2} L
\end{equation}
and 
\begin{equation}
\label{in-cF-n2}
\ti{\beta}^{(n)}_0(1) + \pa(\ti{\xi})  - \big( U_*(\al^{(n)}) +  \vf(\ga) \big) \in \cF_{n+2} \ti{L}\,.
\end{equation}

So we introduce the new degree $0$ element 
\begin{equation}
\label{alpha}
\al : = \al^{(n)} + \ga \in L
\end{equation}
and the new (rectified) $1$-cell 
\begin{equation}
\label{tibeta-n1}
\ti{\beta}^{(n+1)} = \ti{\beta}^{(n+1)}_0(t_0) +  d t_0 \ti{\beta}^{(n+1)}_1\,,
\end{equation}
in $\mMC_{\bul}(\ti{L})$, where 
\begin{equation}
\label{tibeta-n1-1}
\ti{\beta}^{(n+1)}_1 = \ti{\beta}^{(n)}_1 + \ti{\xi}\,,
\end{equation}
$\ti{\beta}^{(n+1)}_0(t_0)$ is the unique solution of the integral equation 
\begin{equation}
\label{tibeta-n1-0}
\ti{\beta}^{(n+1)}_0(t_0) =  \ti{\al} + \int_0^{t_0} \pa^{\,\ti{\beta}^{(n+1)}_0(u)\,} (\ti{\beta}^{(n)}_1 + \ti{\xi}) d u\,,
\end{equation}
and  $\pa^{\,\ti{\beta}^{(n+1)}_0(u)\,}$ denotes the differential twisted by the MC element 
$\ti{\beta}^{(n+1)}_0(u)$.

Inclusions \eqref{curv-al-n},  $\ga \in \cF_{n+1} L$, and \eqref{diff-ga} imply that 
\begin{equation}
\label{curv-al-cF-n2}
\curv(\al) \in \cF_{n+2} L\,.  
\end{equation}
Similarly, inclusions $\ti{\xi} \in \cF_{n+1} \ti{L}$ and $\ga \in \cF_{n+1} L$  imply that 
\begin{equation}
\label{new-end}
\ti{\beta}^{(n)}_0(1) + \pa(\ti{\xi}) - \ti{\beta}^{(n+1)}_0(1) \in  \cF_{n+2} \ti{L}
\end{equation}
and  
\begin{equation}
\label{U-star-al}
U_*(\al^{(n)}) +  \vf(\ga) - U_*(\al)  \in  \cF_{n+2} \ti{L}\,.
\end{equation}

Thus, combining  \eqref{in-cF-n2}, \eqref{new-end}, and \eqref{U-star-al}, 
we conclude that 
\begin{equation}
\label{new-end-U-alpha}
\ti{\beta}^{(n+1)}_0(1)  -  U_*(\al)  \in \cF_{n+2} \ti{L}\,.
\end{equation} 
 
We will now use $\al$ to construct a degree $0$ element
$\al^{(n+1)}$ satisfying \eqref{al-n1-al-n}, \eqref{curv-al-n1}, and 
\eqref{tibeta-n1-al-n1}. 
 
Due to  \eqref{Bianchi} and \eqref{curv-al-cF-n2}, the degree $1$ element 
$\curv(\al)$ represents a cocycle in the quotient complex 
$$
\cF_{n+2} L \big/ \cF_{n+3} L\,.
$$

Let us prove that 
\begin{claim}
\label{cl:curv-al-exact}
The element $\curv(\al)$ represents a coboundary in the quotient complex 
$\cF_{n+2} L \big/ \cF_{n+3} L\,.$
\end{claim}
\begin{subproof}[ of Claim \ref{cl:curv-al-exact}]
Due to \eqref{U-star-curv} 
$$
\curv\big( U_*(\al) \big) = 
\sum_{m \ge 0} \frac{1}{m!} U' \big( \al^m  \curv(\al) \big) \in \cF_{n+2} \ti{L}\,. 
$$
Hence, inclusion \eqref{curv-al-cF-n2} implies that 
\begin{equation}
\label{vf-curv-al}
\vf(\curv(\al)) - \curv\big( U_*(\al) \big)  \in \cF_{n+3} \ti{L}\,.
\end{equation}

On the other hand, equation \eqref{curv-sum} implies that 
$$
\curv\big( U_*(\al) \big) =  \curv\big(\, \ti{\beta}^{(n+1)}_0(1)  +  (U_*(\al) - \ti{\beta}^{(n+1)}_0(1)  )\, \big) =
$$
$$
\curv( \ti{\beta}^{(n+1)}_0(1) ) + \curv^{\ti{\beta}^{(n+1)}_0(1)}  \big( U_*(\al) - \ti{\beta}^{(n+1)}_0(1) \big)\,,
$$
where 
$$
\curv^{\beta}{\al} : = \pa^{\beta}(\al) + \sum_{m=2}^{\infty} \frac{1}{m!} \{\al, \dots, \al \}^{\beta}_m\,,   
$$
and $\pa^{\beta}$ and $\{ \cdot, \dots, \cdot \}^{\beta}_m$ denote the differential and the multi-brackets of the 
$\sLie$-structure twisted by $\beta$. 

Hence, using  \eqref{new-end-U-alpha} and 
the fact that $\ti{\beta}^{(n+1)}_0(1)$ is a MC element of $\ti{L}$ we 
conclude that 
$$
\curv\big( U_*(\al) \big) -  \pa  \big( U_*(\al) - \ti{\beta}^{(n+1)}_0(1) \big) ~ \in ~ \cF_{n+3} \ti{L}\,.
$$

Combining this observation with inclusion \eqref{vf-curv-al}, we conclude that 
the element $\vf(\curv(\al))$ represents a coboundary in the quotient complex 
$$
\cF_{n+2} \ti{L} \big/ \cF_{n+3} \ti{L}\,.
$$ 

Thus the desired statement follows from the fact that 
$\vf$ induces quasi-isomorphism \eqref{quotient-n1}. 
\end{subproof}

Due to Claim \ref{cl:curv-al-exact}, there exists a degree $0$ element 
$\si \in \cF_{n+2} L$ such that
$$
\curv(\al) + \pa (\si) \in \cF_{n+3} L\,.
$$ 

Thus, setting 
$$
\al^{(n+1)} : = \al + \si
$$
we get a degree $0$ element of $L$ satisfying desired 
properties \eqref{al-n1-al-n}, \eqref{curv-al-n1}, and 
\eqref{tibeta-n1-al-n1}. 

Proposition \ref{prop:tower-alphas} is proved.
\end{proof}

Let us denote by $\al_{lim} \in L $ and $\ti{\beta} \in \ti{L} \hotimes \Om_1$ 
the limiting elements of sequences \eqref{alalal} and \eqref{ti-bebebe}, respectively.

Due equation \eqref{begin} and 
inclusions \eqref{curv-al-n}, \eqref{tibeta-n-al-n}, $\al_{lim}$ is a MC 
element of $L$ and $\ti{\beta}$ is a $1$-cell of $\mMC_{\bul}(\ti{L})$ which 
connects the MC elements $\ti{\al}$ and $U_*(\al_{lim})$ of $\ti{L}$. 

Thus we proved that map \eqref{pi-0-level} is surjective. 

\section{Taking care of higher homotopy groups}
\label{sec:higher}

Let us start this section by recalling a lemma from \cite{EnhancedLie}
which allows us to reduce questions about homotopy groups
$\pi_i (\mMC_{\bul}(L), \al) $ of $\mMC_{\bul}(L)$ 
with an arbitrary base point $\al \in \MC(L)$  to 
the corresponding questions about  homotopy groups of
$\mMC_{\bul}(L^{\al})$ with the zero base point:
\begin{lem}[Lem.\ 4.3 \cite{EnhancedLie}]
\label{lem:zero-base-pt}
Let $\al$ be a MC element in $L$ and $L^{\al}$ be the filtered $\sLie$ algebra 
which is obtained from $L$ via twisting by $\al$. Then the following assignment 
\begin{equation}
\label{eq:zero-base}
\beta \in \MC\big( L^{\al} \hotimes \Om_n \big)  ~~\mapsto ~~
\al + \beta \in \MC\big( L \hotimes \Om_n \big)
\end{equation}
is an isomorphism of simplicial sets 
\begin{equation}
\label{Shift-al}
\Shift_{\al} : \mMC_{\bul}(L^{\al})  \to  \mMC_{\bul}(L)
\end{equation}
which sends the zero MC element of $L^{\al}$ to the MC 
element $\al$ in $L$. For every $\infty$-morphism $U$ of filtered 
$\sLie$-algebras $L \to \ti{L}$ the following diagram commutes: 
\begin{equation}
\label{diag-Shift}
\begin{tikzpicture}
\matrix (m) [matrix of math nodes, row sep=2.5em, column sep=4em]
{    \mMC_{\bul}(L^{\al})  &  \mMC_{\bul}(L)  \\
  \mMC_{\bul}(\,\ti{L}^{U_*(\al)}\,)  &  \mMC_{\bul}(\,\ti{L}\,)\,, \\ };
\path[->,font=\scriptsize]
(m-1-1) edge node[auto] {$\Shift_{\al}$}  (m-1-2)  edge node[left] {$\mMC_{\bul}(U^{\al})$} (m-2-1)
(m-1-2)  edge node[auto] {$\mMC_{\bul}(U)$}  (m-2-2) 
(m-2-1) edge node[auto] {$\Shift_{U_*(\al)}$} (m-2-2);
\end{tikzpicture}
\end{equation}
where $U^{\al}$ denotes the $\infty$-morphism $L^{\al} \to \ti{L}^{U_*(\al)}$  which is obtained 
from $U$ via twisting by the MC element $\al$. 
\end{lem}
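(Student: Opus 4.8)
The plan is to verify everything levelwise in the simplicial degree $n$, reducing the MC-set statement to the curvature identity \eqref{curv-sum}, and then to deduce simpliciality from the fact that the structure maps of $\Om_\bullet$ are unital. Fix throughout an MC element $\al$ of $L$.

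\emph{Step 1: twisting is compatible with $-\hotimes\Om_n$.} First I would note that, for each $n\ge 0$, the element $\al\otimes 1\in L\hotimes\Om_n$ is an MC element of the $\sLie$-algebra $L\hotimes\Om_n$ (it satisfies the MC equation because $\al$ does in $L$ and $d(\al\otimes 1)=0$), and that the twisted $\sLie$-algebra $(L\hotimes\Om_n)^{\al\otimes 1}$ is canonically identified with $L^\al\hotimes\Om_n$: this is immediate from the defining formulas \eqref{diff-twisted} and \eqref{Linft_twisted}, since inserting copies of $\al\otimes 1$ only contributes the factor $1$ in the corresponding $\Om_n$-slots. The same observation, applied to \eqref{twisted_map}, identifies the induced $\infty$-morphism $(U^{(n)})^{\al\otimes 1}$ with $(U^\al)^{(n)}$.

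\emph{Step 2: $\Shift_\al$ is a levelwise bijection on MC sets.} Applying \eqref{curv-sum} inside $L\hotimes\Om_n$ to the pair $\al\otimes 1$, $\beta$, and using that $\curv(\al\otimes 1)=0$, one obtains
\[
\curv(\al\otimes 1+\beta)=\pa^{\al}\beta+\sum_{m\ge 2}\frac{1}{m!}\{\beta,\dots,\beta\}^{\al}_m,
\]
whose right-hand side is, by Step 1, precisely the curvature of $\beta$ computed in $(L\hotimes\Om_n)^{\al\otimes 1}=L^\al\hotimes\Om_n$. Hence $\beta\in\MC(L^\al\hotimes\Om_n)$ if and only if $\al+\beta\in\MC(L\hotimes\Om_n)$, and the inverse of \eqref{eq:zero-base} is $\ga\mapsto\ga-\al$ (apply the same identity with the roles of the two summands interchanged). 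All series converge because $L=\cF_1L$ and the brackets are filtration-compatible. At simplicial degree $0$ this bijection sends $\bfzero\in\MC(L^\al)$ to $\al\in\MC(L)$, as required.

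\emph{Step 3: simpliciality.} The face and degeneracy maps of $\mMC_{\bul}(L)$ are obtained by applying $L\hotimes(-)$ followed by $\MC$ to the face and degeneracy maps of the simplicial dg commutative algebra $\Om_\bullet$; each of the latter is a morphism of unital dg commutative algebras and therefore fixes $1$, hence carries $\al\otimes 1$ to $\al\otimes 1$. Since $\Shift_\al$ simply adds $\al\otimes 1$, it commutes with every face and every degeneracy, so $\Shift_\al$ is a map of simplicial sets; being a levelwise bijection, it is an isomorphism, which is \eqref{Shift-al}.

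\emph{Step 4: commutativity of \eqref{diag-Shift}.} I would evaluate the two legs on $\beta\in\MC(L^\al\hotimes\Om_n)$, writing $\al$ for $\al\otimes 1$ and $U'$ for its extension $(U^{(n)})'$ (harmless by Step 1). Going right-then-down, \eqref{MC-U-formula} and \eqref{U-star} give $\sum_{\ell\ge 1}\tfrac1{\ell!}U'\bigl((\al+\beta)^{\ell}\bigr)$; expanding the $\ell$-th symmetric power by the binomial theorem (no Koszul signs, since $\al$ and $\beta$ have degree $0$) this equals $\sum_{(k,m)\neq(0,0)}\tfrac1{k!\,m!}U'(\al^k\beta^m)$. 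Going down-then-right, one gets $\Shift_{U_*(\al)}\bigl((U^\al)^{(n)}_*(\beta)\bigr)=U_*(\al)+\sum_{\ell\ge1}\tfrac1{\ell!}(U^\al)'(\beta^\ell)$, and substituting \eqref{twisted_map} for $(U^\al)'$ and \eqref{U-star} for $U_*(\al)$ turns this into $\sum_{k\ge1}\tfrac1{k!}U'(\al^k)+\sum_{\ell\ge1}\sum_{k\ge0}\tfrac1{k!\,\ell!}U'(\al^k\beta^\ell)$, which is the same double sum. Hence the square commutes. I expect no deep obstacle here: the only slightly delicate points are the compatibility of twisting with $-\hotimes\Om_n$ in Step 1 (for both algebras and $\infty$-morphisms) and the bookkeeping of the nested infinite sums in Step 4, with convergence everywhere guaranteed by completeness of the filtrations and the compatibility conditions recalled in Section \ref{sec:twisting}.
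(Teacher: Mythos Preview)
The paper does not supply its own proof of this lemma: it is quoted verbatim as Lemma~4.3 of \cite{EnhancedLie} and used without argument. Your proof is correct and is essentially the expected one --- Step~2 is precisely the application of \eqref{curv-sum} that underlies the statement, Step~3 correctly identifies why the shift by a constant commutes with the simplicial structure, and the binomial expansion in Step~4 is the standard verification that $U_*(\al+\beta)=U_*(\al)+(U^\al)_*(\beta)$; this is the same computation one would find in \cite{EnhancedLie}.
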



Let us now prove that for every $n \ge 2$ the map $\mMC_{\bul}(U)$ induces 
a weak equivalence of simplicial sets:
\begin{equation}
\label{for-L-factor-cF-n-L}
\mMC_{\bul} \big(L / \cF_n L \big) \longrightarrow  \mMC_{\bul} \big( \ti{L} / \cF_n \ti{L} \big)
\end{equation}
  
Since $\sLie$-algebras $L \big/ \cF_2 L$ and  $\ti{L} \big/ \cF_2 \ti{L}$
are abelian, Proposition \ref{prop:Abelian} implies that \eqref{for-L-factor-cF-n-L} is 
indeed a weak equivalence for $n =2$. 

So we take $n = 2$ as the base of our induction and assume that 
$$
\mMC_{\bul} \big(L / \cF_m L \big) \longrightarrow  \mMC_{\bul} \big( \ti{L} / \cF_m \ti{L} \big)
$$
is a weak equivalence of simplicial sets for every $m \le n$. 

Our goal is to prove that 
\begin{equation}
\label{goal}
\mMC_{\bul} \big(L / \cF_{n+1} L \big) \longrightarrow  \mMC_{\bul} \big( \ti{L} / \cF_{n+1} \ti{L} \big)
\end{equation}
is also a weak equivalence of simplicial sets. 

Due to the results of Section \ref{sec:pi0-level} and Lemma \ref{lem:zero-base-pt}, 
it suffices to prove that 
\begin{equation}
\label{i-ge-1}
\pi_i \Big( \mMC_{\bul} \big(L / \cF_{n+1} L \big), ~ 0 \Big) 
\longrightarrow  
\pi_i \Big( \mMC_{\bul} \big( \ti{L} / \cF_{n+1} \ti{L} \big), ~ 0 \Big)
\end{equation}
is an isomorphism of groups for every $i \ge 1$\,.

For this purpose, we consider the following commutative diagram 
\begin{equation}
\label{n-n1-diagram}
\begin{tikzpicture}
\matrix (m) [matrix of math nodes, row sep=2em, column sep=2em]
{\bfzero  &  \cF_n L / \cF_{n+1} L  &  L / \cF_{n+1} L  &  L / \cF_n L  & \bfzero \\
\bfzero  & \cF_n \ti{L} / \cF_{n+1} \ti{L}  &  \ti{L} / \cF_{n+1}  \ti{L}  &  \ti{L} / \cF_n  \ti{L} & \bfzero\,, \\  };
\path[->,font=\scriptsize]
(m-1-1) edge (m-1-2)  (m-1-2) edge (m-1-3)
edge node[left] {} (m-2-2) (m-1-3) edge (m-1-4)
edge node[left] {} (m-2-3) (m-1-4) edge (m-1-5) 
edge node[left] {} (m-2-4) 
(m-2-1) edge (m-2-2)  (m-2-2) edge  (m-2-3)  (m-2-3) edge 
 (m-2-4)  (m-2-4) edge  (m-2-5);    
\end{tikzpicture}
\end{equation}
where the rows form short exact sequences of strict morphisms of $\sLie$-algebras, 
the left most vertical arrow is the strict morphism induced by the 
linear term $\vf$ of $U$, and the other two vertical arrows are 
$\infty$-morphisms induced by $U$. 

Since the morphisms  $ L / \cF_{n+1} L \to   L / \cF_{n} L$ and  
$ \ti{L} / \cF_{n+1} \ti{L} \to   \ti{L} / \cF_{n} \ti{L}$ are surjective,
Proposition 4.7 from \cite{Ezra-infty} implies that the 
corresponding maps 
\begin{equation}
\label{mod-cF-n}
\mMC_{\bul}(L / \cF_{n+1} L) \to   \mMC_{\bul}(L / \cF_{n} L) 
\quad \textrm{and} \quad 
\mMC_{\bul}(\ti{L} / \cF_{n+1} \ti{L}) \to   \mMC_{\bul}(\ti{L} / \cF_{n} \ti{L})
\end{equation}
are fibrations of simplicial sets with the fibers over the base point 
$0 \in  L / \cF_{n} L$ (resp. $0 \in  \ti{L} / \cF_{n} \ti{L}$) being 
$\mMC_{\bul}(\cF_n L / \cF_{n+1} L )$ (resp. $\mMC_{\bul}( \cF_n \ti{L} / \cF_{n+1} \ti{L})$). 

Therefore, the rows of diagram \eqref{n-n1-diagram} give us the 
long ``exact'' sequences\footnote{The words exact is put in quotation marks because
the $\pi_0$ portion of sequence \eqref{les-homot-gr} is not an exact sequence 
of groups. Moreover, in general, the map  $\pi_0 ( L / \cF_{n+1} L) \to   \pi_0 ( L / \cF_{n} L)$
is not a surjective map of sets.}
of homotopy groups (see, for example, \cite[Lemma 7.3]{Goerss-Jardine}): 
\begin{equation}
\label{les-homot-gr}
\begin{tikzpicture}
\matrix (m) [matrix of math nodes, row sep=2em, column sep=1em]
{ \dots & \pi_1(\cF_n L / \cF_{n+1} L) & \pi_1 ( L / \cF_{n+1} L) & \pi_1(L / \cF_{n} L) \\ 
~ & \pi_0 (\cF_n L / \cF_{n+1} L) &  \pi_0 ( L / \cF_{n+1} L) &  \pi_0 ( L / \cF_{n} L)\,, \\ };
\path[->,font=\scriptsize]
(m-1-1) edge (m-1-2)  (m-1-2) edge (m-1-3) (m-1-3) edge (m-1-4) 
(m-1-4)  edge[out=355, in=175] (m-2-2) (m-2-2) edge  (m-2-3)  (m-2-3) edge  (m-2-4);
\end{tikzpicture}
\end{equation} 
\begin{equation}
\label{les-homot-gr-tilde}
\begin{tikzpicture}
\matrix (m) [matrix of math nodes, row sep=2em, column sep=1em]
{ \dots & \pi_1(\cF_n \ti{L} / \cF_{n+1} \ti{L}) & \pi_1 ( \ti{L} / \cF_{n+1} \ti{L}) & \pi_1(\ti{L} / \cF_{n} \ti{L}) \\ 
~ & \pi_0 (\cF_n \ti{L} / \cF_{n+1} \ti{L}) &  \pi_0 ( \ti{L} / \cF_{n+1} \ti{L}) &  \pi_0 ( \ti{L} / \cF_{n} \ti{L})\,, \\ };
\path[->,font=\scriptsize]
(m-1-1) edge (m-1-2)  (m-1-2) edge (m-1-3) (m-1-3) edge (m-1-4) 
(m-1-4)  edge[out=355, in=175] (m-2-2) (m-2-2) edge  (m-2-3)  (m-2-3) edge  (m-2-4);
\end{tikzpicture}
\end{equation} 
where by abuse of notation 
$$
\pi_0(L): = \pi_0 (\mMC_{\bul}(L))\,, \qquad 
\pi_i(L) : = \pi_i (\mMC_{\bul}(L), 0)\,, \qquad i \ge 1\,.
$$

Furthermore, the $\infty$-morphism $U$ induces a map from sequence 
\eqref{les-homot-gr} to sequence \eqref{les-homot-gr-tilde}. 

For our purposes, we only need certain truncations of 
sequences \eqref{les-homot-gr}, \eqref{les-homot-gr-tilde}.
So we denote by 
\begin{equation}
\label{mb-L-n-n1}
\mb(L, \cF_n L , \cF_{n+1} L)
\end{equation}
the image of the map 
$$
\pi_1(L / \cF_{n} L) \to  \pi_0 (\cF_n L / \cF_{n+1} L)\,.
$$
In other words, $\mb(L, \cF_n L , \cF_{n+1} L)$ consists of 
cohomology classes $c \in H^0(\cF_n L / \cF_{n+1} L)$ 
satisfying the following property: 
\begin{pty}
\label{P:in-mb}
For $c$, there exists a MC element 
$$
\beta  = \beta_0(t_0)   + d t_0 \beta_1(t_0) ~\in~ \big( L / \cF_{n+1} L \big) \hotimes  \Om_1
$$
such that $\beta_0(0) =0$ in $L / \cF_{n+1} L$ and $\beta_0(1)$ is a cocycle in 
$ \cF_n L / \cF_{n+1} L$ representing the class $c$. 
\end{pty}

Let us now consider the following commutative diagram  
\begin{equation}
\label{homotopy-master}
\begin{tikzpicture}
\matrix (m) [matrix of math nodes, row sep=2em, column sep=1em]
{ \dots & \pi_1(\cF_n L / \cF_{n+1} L) & \pi_1 ( L / \cF_{n+1} L) & \pi_1(L / \cF_{n} L) &  \mb(L, \cF_n L , \cF_{n+1} L)\,, \\ 
 \dots & \pi_1(\cF_n \ti{L} / \cF_{n+1}  \ti{L}) & \pi_1 (  \ti{L} / \cF_{n+1}  \ti{L}) & \pi_1( \ti{L} / \cF_{n}  \ti{L}) 
 &  \mb( \ti{L}, \cF_n  \ti{L} , \cF_{n+1}  \ti{L})\,,  \\ };
\path[->,font=\scriptsize]
(m-1-1) edge (m-1-2)  (m-1-2) edge (m-1-3) (m-1-3) edge (m-1-4) 
(m-1-4)  edge (m-1-5)
 (m-1-2) edge  (m-2-2) (m-1-3) edge  (m-2-3)  (m-1-4) edge  (m-2-4)
  (m-1-5) edge  (m-2-5)
 (m-2-1) edge (m-2-2)  (m-2-2) edge (m-2-3) (m-2-3) edge (m-2-4) 
(m-2-4)  edge (m-2-5);
\end{tikzpicture}
\end{equation} 
where vertical arrows are induced by $\mMC_{\bul}(U)$, rows 
\begin{equation}
\label{no-tilde-row}
\begin{tikzpicture}
\matrix (m) [matrix of math nodes, row sep=2em, column sep=1em]
{ \dots &  \pi_2(L / \cF_{n} L) & \pi_1(\cF_n L / \cF_{n+1} L) & \pi_1 ( L / \cF_{n+1} L) & \pi_1(L / \cF_{n} L)   \\ };
\path[->,font=\scriptsize]
(m-1-1) edge (m-1-2)  (m-1-2) edge (m-1-3) (m-1-3) edge (m-1-4) 
(m-1-4)  edge (m-1-5);
\end{tikzpicture}
\end{equation}
\begin{equation}
\label{tilde-row}
\begin{tikzpicture}
\matrix (m) [matrix of math nodes, row sep=2em, column sep=1em]
{ \dots &  \pi_2(\ti{L} / \cF_{n} \ti{L}) & \pi_1(\cF_n \ti{L} / \cF_{n+1} \ti{L}) & \pi_1 ( \ti{L} / \cF_{n+1} \ti{L}) & \pi_1(\ti{L} / \cF_{n} \ti{L})   \\ };
\path[->,font=\scriptsize]
(m-1-1) edge (m-1-2)  (m-1-2) edge (m-1-3) (m-1-3) edge (m-1-4) 
(m-1-4)  edge (m-1-5);
\end{tikzpicture}
\end{equation}
are sequences of groups which are exact in terms 
$$
 \pi_1 ( L / \cF_{n+1} L),  ~\pi_1(\cF_n L / \cF_{n+1} L),~  \pi_2(L / \cF_{n} L), ~\dots 
$$
and
$$
 \pi_1 ( \ti{L} / \cF_{n+1}  \ti{L}),  ~\pi_1(\cF_n  \ti{L} / \cF_{n+1}  \ti{L}),~  \pi_2( \ti{L} / \cF_{n}  \ti{L}), ~\dots \,,
$$
respectively.

We claim that 
\begin{prop}
\label{prop:induction}
If $\mMC_{\bul}(U)$ induces an isomorphism of groups  
$$
\pi_i \big(\mMC_{\bul}(L/\cF_n L), 0 \big) ~\to~ \pi_i\big( \mMC_{\bul}(\,\ti{L}/ \cF_n \ti{L} \, ), 0 \big) 
$$ 
for every $i \ge 1$ then the corresponding homomorphism of groups 
$$
\pi_i \big(\mMC_{\bul}(L/\cF_{n+1} L), 0 \big) ~\to~ \pi_i\big( \mMC_{\bul}(\,\ti{L}/ \cF_{n+1} \ti{L} \, ), 0 \big) 
$$ 
is also an isomorphism for all $i \ge 1$. 
\end{prop}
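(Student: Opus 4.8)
The plan is to feed the inductive hypothesis and Proposition \ref{prop:Abelian} into the five lemma, applied to the ladder joining the long exact sequences \eqref{les-homot-gr} and \eqref{les-homot-gr-tilde} of homotopy groups of the fibrations \eqref{mod-cF-n}. There are two inputs. First, since for $n\ge 1$ every $m$-ary bracket ($m\ge2$) of the $\sLie$-algebra $\cF_n L/\cF_{n+1}L$ lands in $\cF_{mn}L/\cF_{n+1}L=\bfzero$, both $\cF_n L/\cF_{n+1}L$ and $\cF_n\ti{L}/\cF_{n+1}\ti{L}$ are abelian $\sLie$-algebras and the $\infty$-morphism between them induced by $U$ is the strict morphism given by the linear term $\vf$, i.e.\ the quasi-isomorphism \eqref{quotient-n1}. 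Hence Proposition \ref{prop:Abelian} shows that $\mMC_{\bul}(U)$ restricts to a weak equivalence of simplicial vector spaces $\mMC_{\bul}(\cF_n L/\cF_{n+1}L)\to\mMC_{\bul}(\cF_n\ti{L}/\cF_{n+1}\ti{L})$, and in particular induces isomorphisms on $\pi_i$ for all $i\ge 0$. Second, the inductive hypothesis gives isomorphisms $\pi_i(L/\cF_n L)\to\pi_i(\ti{L}/\cF_n\ti{L})$ for all $i\ge1$.

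For $i\ge2$ I would isolate the five consecutive terms
$$\pi_{i+1}(L/\cF_n L)\to\pi_i(\cF_n L/\cF_{n+1}L)\to\pi_i(L/\cF_{n+1}L)\to\pi_i(L/\cF_n L)\to\pi_{i-1}(\cF_n L/\cF_{n+1}L)$$
of \eqref{les-homot-gr} and the corresponding ones of \eqref{les-homot-gr-tilde}. All five of these groups are abelian (the homotopy groups in degrees $\ge2$ are abelian, and $\pi_1$ of the simplicial abelian group $\mMC_{\bul}(\cF_n L/\cF_{n+1}L)$ is abelian), and by the two inputs the first, second, fourth and fifth vertical maps are isomorphisms. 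The ordinary five lemma then gives that $\pi_i(L/\cF_{n+1}L)\to\pi_i(\ti{L}/\cF_{n+1}\ti{L})$ is an isomorphism.

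The case $i=1$ is the reason the terms $\mb(\,\cdot\,,\,\cdot\,,\,\cdot\,)$ appear in \eqref{homotopy-master}: the sequences \eqref{no-tilde-row} and \eqref{tilde-row} need not be exact at $\pi_1(L/\cF_n L)$, but they become exact there once extended by the surjection onto $\mb$. I would therefore apply the five lemma for groups to
$$\pi_2(L/\cF_n L)\to\pi_1(\cF_n L/\cF_{n+1}L)\to\pi_1(L/\cF_{n+1}L)\to\pi_1(L/\cF_n L)\to\mb(L,\cF_n L,\cF_{n+1}L)\to\bfzero$$
and its tilde analogue, joined as in \eqref{homotopy-master}; exactness at $\pi_1(L/\cF_n L)$ holds because $\mb(L,\cF_n L,\cF_{n+1}L)$ is, by definition, the image of $\pi_1(L/\cF_n L)$ in $\pi_0(\cF_n L/\cF_{n+1}L)$, whose kernel is the image of $\pi_1(L/\cF_{n+1}L)$. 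The first, second and fourth vertical maps are isomorphisms by the two inputs, while the last map $\mb(L,\cF_n L,\cF_{n+1}L)\to\mb(\ti{L},\cF_n\ti{L},\cF_{n+1}\ti{L})$ is the restriction of the isomorphism $H^0(\cF_n L/\cF_{n+1}L)\to H^0(\cF_n\ti{L}/\cF_{n+1}\ti{L})$ induced by \eqref{quotient-n1} (see Property \ref{P:in-mb}), hence injective. The five lemma for groups then yields that $\pi_1(L/\cF_{n+1}L)\to\pi_1(\ti{L}/\cF_{n+1}\ti{L})$ is an isomorphism, which completes the induction.

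The step I expect to require the most care is not the diagram chases but the low-degree bookkeeping behind them: one must make sure that the pertinent stretches of \eqref{les-homot-gr} and \eqref{les-homot-gr-tilde} are honest exact sequences of (a priori non-abelian) groups — in particular that the connecting map $\pi_1(L/\cF_n L)\to\pi_0(\cF_n L/\cF_{n+1}L)$ is a group homomorphism with image exactly $\mb(L,\cF_n L,\cF_{n+1}L)$ — and that the five lemma is used in its group-theoretic version where needed. For these points one relies on the standard results on fibration sequences and towers of simplicial sets collected in \cite[Section~VI]{Goerss-Jardine}.
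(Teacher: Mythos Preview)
Your proposal is correct and follows essentially the same route as the paper: both arguments compare the long exact sequences \eqref{les-homot-gr} and \eqref{les-homot-gr-tilde} using the inductive hypothesis on $\pi_i(L/\cF_nL)$ together with Proposition \ref{prop:Abelian} for the abelian fibres $\cF_nL/\cF_{n+1}L$. The only difference is cosmetic: the paper writes out the $i=1$ diagram chase by hand---in particular, the explicit construction of the loop $\beta'$ from $\beta$ is a by-hand verification of exactness at $\pi_1(L/\cF_nL)$ (i.e.\ that $c=0$ implies $h$ lifts)---whereas you package the same chase as an invocation of the group-theoretic five lemma, correctly noting that at the $\mb$-end only pointed-set exactness and injectivity of the map on $H^0$ are needed.
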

\begin{proof}
The most subtle part of the proof is the surjectivity of the homomorphism
\begin{equation}
\label{mod-n1-mod-n}
\pi_1(L/\cF_{n+1} L) \to  \pi_1(\ti{L}/\cF_{n+1} \ti{L})\,.
\end{equation}
So we will start with proving this fact. 

Let $\ti{g}$ be an element of $ \pi_1(\ti{L}/\cF_{n+1} \ti{L})$ and 
$\ti{h}$ be the corresponding element in $\pi_1(\ti{L}/\cF_{n} \ti{L})$\,.

Since the homomorphism
\begin{equation}
\label{pi1-mod-n}
\pi_1(L/\cF_{n} L) \to  \pi_1(\ti{L}/\cF_{n} \ti{L})
\end{equation}
is an isomorphism there exists (a unique) element 
$h \in \pi_1(L/\cF_{n} L)$ which is sent to $\ti{h}$ via
\eqref{pi1-mod-n}.  

Let us denote by $c$ (resp. $\ti{c}$) the image of $h$ (resp. $\ti{h}$) in $\mb(L, \cF_n L , \cF_{n+1} L)$
(resp. $\mb(\ti{L}, \cF_n \ti{L} , \cF_{n+1} \ti{L})$). 

We know that $c$ is a cohomology class in $H^0(\cF_{n} L / \cF_{n+1} L)$
for which there exists a MC element 
\begin{equation}
\label{beta-mod-n1}
\beta  = \beta_0(t)   + d t \beta_1(t) ~\in~ \big( L / \cF_{n+1} L \big) \hotimes  \Om_1
\end{equation}
such that 
\begin{itemize}

\item $\beta_0(0) =0$ in $L / \cF_{n+1} L$\,,

\item $\beta_0(1)$ is a cocycle in $ \cF_n L / \cF_{n+1} L$ representing the class $c$, 

\item the image of $1$-cell \eqref{beta-mod-n1} in $\mMC_{\bul}( L / \cF_{n} L  )$ is 
a loop representing $h \in  \pi_1(L/\cF_{n} L)$\,.

\end{itemize}

Since $\ti{h}$ comes from an element of $\ti{g}\in  \pi_1(\ti{L}/\cF_{n+1} \ti{L})$, 
the class $\ti{c} \in H^0(\cF_{n} \ti{L} / \cF_{n+1} \ti{L})$ is zero. Hence, since 
diagram \eqref{homotopy-master} commutes and $\vf$ induces an isomorphism 
$$
H^{\bul}(\cF_{n} L / \cF_{n+1} L)  \to  H^{\bul}(\cF_{n} \ti{L} / \cF_{n+1} \ti{L})
$$ 
the class $c$ is also zero.  

Therefore, there exists an element $\xi \in  \cF_n L / \cF_{n+1} L$ such that 
\begin{equation}
\label{beta-1-diff-xi}
\beta_0(1) + \pa \xi  = 0  ~~~\textrm{in}~~~   \cF_n L / \cF_{n+1} L\,. 
\end{equation}

Using the inclusion $\xi \in  \cF_n L / \cF_{n+1} L$ and \eqref{beta-1-diff-xi},  
 it is easy to 
show that the degree $0$ element 
\begin{equation}
\label{beta-pr}
\beta' : = \beta_0(t) + t \pa \xi + d t (\beta_1(t) + \xi) ~\in~ \big( L / \cF_{n+1} L \big) \hotimes  \Om_1
\end{equation}
satisfies these properties: 
\begin{itemize}

\item[i)] $\beta'$ is a MC element in $ \big( L / \cF_{n+1} L \big) \hotimes  \Om_1 $\,,

\item[ii)]  $\beta' \big|_{t=0} = \beta' \big|_{t=1} = 0$\,, and 

\item[iii)] the image of $1$-cell $\beta'$ in $\mMC_{\bul}( L / \cF_{n} L  )$ is 
a loop representing $h \in  \pi_1(L/\cF_{n} L)$\,.

\end{itemize}

On the other hand, property ii) means that the $1$-cell $\beta'$ is also 
a loop in $\mMC_{\bul}( L / \cF_{n+1} L  )$. Thus we proved that there exists 
an element 
\begin{equation}
\label{g-prime}
g' \in \pi_1( L / \cF_{n+1} L  )
\end{equation}
whose image in $\pi_1( L / \cF_{n} L  )$ is $h$.

Let us denote by $\ti{g}'$ the image of $g'$ in  $\pi_1(\, \ti{L} / \cF_{n+1} \ti{L} \, )$\,.
Since diagram \eqref{homotopy-master} commutes, we the element 
\begin{equation}
\label{ti-g-pr-tig}
\ti{g}' \ti{g}^{-1} 
\end{equation}
belongs to the kernel of the map   
$$
\pi_1(\, \ti{L} / \cF_{n+1} \ti{L} \, ) \to \pi_1(\, \ti{L} / \cF_{n} \ti{L} \, )\,.
$$

Therefore, since sequence \eqref{tilde-row} is exact in $\pi_1(\, \ti{L} / \cF_{n+1} \ti{L} \, )$ 
and 
$$
\pi_1(\cF_n L  / \cF_{n+1} L ) \to 
\pi_1(\, \cF_n \ti{L} / \cF_{n+1} \ti{L} \, )
$$ 
is an isomorphism of groups, there exists $f \in \pi_1(\cF_n L  / \cF_{n+1} L )$
which is sent to $\ti{g} (\ti{g}')^{-1}$ via the composition 
$$
\pi_1(\cF_n L  / \cF_{n+1} L ) \to  \pi_1( L  / \cF_{n+1} L ) \to \pi_1(\, \ti{L} / \cF_{n+1} \ti{L} \, )\,.
$$

Thus, if $f'$ is the image of $f$ in $\pi_1( L  / \cF_{n+1} L )$ then the image of $f'g'$ in 
$\pi_1(\, \ti{L} / \cF_{n+1} \ti{L} \, )$ coincides with $\ti{g}$. We proved that homomorphism
\eqref{mod-n1-mod-n} is surjective. 

The proof of injectivity of  \eqref{mod-n1-mod-n} is much easier so we leave it to the reader. 

It remains to prove that 
$$
\pi_i \big(\mMC_{\bul}(L/\cF_{n+1} L), 0 \big) ~\to~ \pi_i\big( \mMC_{\bul}(\,\ti{L}/ \cF_{n+1} \ti{L} \, ), 0 \big) 
$$ 
is an isomorphism for all $i \ge 2$. This is done by induction on $i$ using the exactness of 
sequences \eqref{no-tilde-row} and \eqref{tilde-row} in terms
$$
 \pi_1 ( L / \cF_{n+1} L),  ~\pi_1(\cF_n L / \cF_{n+1} L),~  \pi_2(L / \cF_{n} L), ~\dots 
$$
and
$$
 \pi_1 ( \ti{L} / \cF_{n+1}  \ti{L}),  ~\pi_1(\cF_n  \ti{L} / \cF_{n+1}  \ti{L}),~  \pi_2( \ti{L} / \cF_{n}  \ti{L}), ~\dots \,,
$$ 
respectively. Here we also use the fact that $\mMC_{\bul}(U)$ induces an isomorphism of groups 
$$
\pi_i \big(\mMC_{\bul}(\cF_n L / \cF_{n+1} L), 0 \big) ~\to~ \pi_i\big( \mMC_{\bul}(\,\cF_n \ti{L}/ \cF_{n+1} \ti{L} \, ), 0 \big) 
$$ 
for every $i \ge 1$. This fact follows from Proposition \ref{prop:Abelian} since both $\cF_n L / \cF_{n+1} L$ 
and $\cF_n \ti{L}/ \cF_{n+1} \ti{L}$ are abelian $\sLie$-algebras. 
  
Proposition \ref{prop:induction} is proved. 
\end{proof}

\subsection{The end of the proof of Theorem \ref{thm:GM}}
\label{sec:the-end}
The results presented in Section \ref{sec:pi0-level}, Lemma \ref{lem:zero-base-pt}, and Proposition \ref{prop:induction} imply that $U$ induces a morphism between towers
of Kan complexes:
\[
\begin{tikzpicture}[descr/.style={fill=white,inner sep=2.5pt},baseline=(current  bounding  box.center)]
\matrix (m) [matrix of math nodes, row sep=2em,column sep=3em,
  ampersand replacement=\&]
  {  
\vdots \& \vdots\\
\mMC_{\bul}(L / \cF_{n+1} L) \& \mMC_{\bul}(\ti{L} / \cF_{n+1} \ti{L})\\
\mMC_{\bul}(L / \cF_{n} L) \& \mMC_{\bul}(\ti{L} / \cF_{n} \ti{L})\\
\mMC_{\bul}(L / \cF_{n-1} L) \& \mMC_{\bul}(\ti{L} / \cF_{n-1} \ti{L})\\
\vdots \& \vdots\\
}; 
     \path[->,font=\scriptsize] 
  (m-1-1) edge node[auto]{$ $}  (m-2-1)
  (m-1-2) edge node[auto]{$ $}  (m-2-2)
  (m-2-1) edge  node[auto]{$ $}  (m-3-1)
  (m-2-2) edge node[auto]{$ $}  (m-3-2)
  (m-3-1) edge  node[auto]{$ $}  (m-4-1)
  (m-3-2) edge node[auto]{$ $}  (m-4-2)
  (m-4-1) edge  node[auto]{$ $}  (m-5-1)
  (m-4-2) edge node[auto]{$ $}  (m-5-2)
 (m-2-1) edge  node[auto]{$\sim$}  (m-2-2)
  (m-3-1) edge  node[auto]{$\sim$}  (m-3-2)
  (m-4-1) edge  node[auto]{$\sim$}  (m-4-2)
;
\end{tikzpicture}
\]
in which every vertical arrow is a fibration and every horizontal
arrow is a weak equivalence. Hence, we have  a weak equivalence
between fibrant objects in the model category of towers of simplicial
sets (Definition 1.1, Proposition 1.3, and Remark 1.5  in \cite[Section VI]{Goerss-Jardine} ). The
inverse limit is right adjoint to the constant tower functor, which
preserves cofibrations and weak equivalences. Hence, the inverse limit preserves
weak equivalences between fibrant objects, and thus
we conclude that the map
\[
\mMC_{\bul}(U) \colon \varprojlim \mMC_{\bul}(L / \cF_{n} L) \to
\varprojlim \mMC_{\bul}(\ti{L} / \cF_{n} \ti{L})
\]
is a homotopy equivalence of simplicial sets.

\appendix

\section{The homotopy operators $h^i_n$ and Lemma 4.6 from \cite{Ezra-infty}}
\label{app:Ezra-lemma}

Let, as above, $\Delta^{n}$ denote the geometric $n$-simplex 
and $\Omega_{n}$ denote Sullivan's polynomial de Rham
complex of $\Delta^{n}$ with coefficients in $\bbk$. 

For $0 \leq i \leq n$, let $\phi_{i} \maps [0,1] \times
\Delta^{n} \to \Delta^n$ be the map
\[
\phi_{i}(\, u,\vec{t}\,) = u\vec{t} + (1-u) \vec{e}_i,
\]
where $\vec{e}_i$ denotes the $i$th vertex of $\Delta^{n}$. Let 
\[
\pi_{\ast} \maps \Omega^{\bullet}\bigl([0,1] \times \Delta^{n} \bigr)
\to \Omega^{\bullet -1}_n
\]
denote the integration over the fibers of the projection $\pi \maps 
[0,1] \times \Delta^{n} \to \Delta^n$. As in \cite[Sec. 4]{Ezra-infty}, we define the homotopy operators
\begin{equation}
\label{eq:ho_op1}
\begin{split}
h^{i}_{n} \maps \Omega^{\bullet}_{n} \to \Omega^{\bullet-1}_{n}\\
h^{i}_{n} \omega = \pi_{\ast} \phi^{\ast}_i \omega.
\end{split}
\end{equation}
Explicitly, if 
$$
\omega = f(t_0, \ldots, t_n) dt_{k_1} dt_{k_2} \cdots dt_{k_m}
$$ 
is a $m$-form on $\Delta^{n}$, then
\begin{equation} 
\label{eq:ho_op2}
h^{i}_{n} \omega = \sum_{j=1}^{m} (-1)^{j-1} (t_{k_{j}} -\delta_{ik_{j}}) 
dt_{k_1} dt_{k_2} \cdots \widehat{dt_{k_{j}}} \cdots dt_{k_m}
\int^{1}_{0} u^{m-1} (f\circ \phi_{i}) ~ du,
\end{equation}
where $\delta_{ik_{j}}$ is the Kronecker delta. 

One can show that $h^i_{n}$ is a chain homotopy between
$\ve^i_n$ and the identity $\id_{\Omega_n}$:
\begin{equation} 
\label{eq:poincare}
d h^{i}_{n} +h^{i}_{n} d = \id_{\Omega_n} - \ve^i_n,
\end{equation}
where $\ve^{i}_{n} \maps \Omega_n \to \mathbb{K}$ is the evaluation 
at $\vec{e}_i \in \Delta^n$\,. 

Due to \cite[Lemma 3.5]{Ezra-infty}, the operator $h^i_n$ satisfies the property
\begin{equation}
\label{h-in-square}
h^{i}_n \circ h^{i}_n=0\,.
\end{equation}

For a filtered $\sLie$-algebra $L$, a positive integer $n$, and 
an integer $0 \le i \le n$, we consider the following subspace
of  $\big(L \hotimes \Om_n \big)^0$
\begin{equation}
\label{stub}
\Stub^i_n(L)  : = \{ (\pa + d) \xi ~\big|~ \xi \in  \big(L \hotimes \Om_n \big)^{-1}, ~ \textrm{such that}~ \pa(\xi)\big|_{t_i =1} = 0 \}\,.
\end{equation}
For example, from \eqref{eq:ho_op2} we see that for every MC element $\al \in L  \hotimes \Om_n $, the element 
\begin{equation}
\label{diff-h-al}
(\pa + d) \circ h^i_n(\al) \in \Stub^i_n(L)\,.
\end{equation}

Given a pair $(\mu, \nu) \in \MC(L)  \times \Stub^i_n(L)$, we define 
the following sequence $\{\al^{(k)} \}_{k \ge 0}$ of degree $0$ elements in $L \hotimes \Om_n$
\begin{equation}
\label{Ezra-recursion}
\begin{array}{c}
\al^{(0)} : = \mu + \nu\,, \\[0.3cm] 
\displaystyle
\al^{(k+1)}  : = \al^{(0)} - \sum_{m=2}^{\infty} \frac{1}{m!} h^i_n  \, \{ \al^{(k)}, \dots,  \al^{(k)}\}_m \,.
\end{array}
\end{equation}
 
A simple inductive argument shows that $\alpha^{(k+1)} - \alpha^{(k)} \in
\cF_{k+2} L \hotimes \Omega_n$. Hence, the sequence converges to
$\alpha = \lim \alpha^{(k)}$, which satisfies
\begin{equation} 
\label{eq:ezra_limit}
\alpha =  \mu + \nu - \sum_{m \geq 2} \frac{1}{m!} h^{i}_{n}
\{ \al, \dots,  \al\}_m\,.
\end{equation}
Combining \eqref{eq:ho_op2} with \eqref{eq:ezra_limit}
and the definition of $\Stub^i_n(L)$, we see that 
\begin{equation}
\label{back-to-mu}
\ve^{n}_{i} ( \alpha ) = \mu\,. 
\end{equation}
Furthermore, combining \eqref{eq:poincare} with \eqref{h-in-square} and \eqref{eq:ezra_limit}, we deduce that  
\begin{equation}
\label{back-to-nu}
(\pa + d) \circ h^i_n (\alpha) = \nu\,.
\end{equation}

Let us now show that $\alpha$ is a MC element of $L \hotimes \Om_n$.
Using \eqref{eq:poincare}, \eqref{back-to-mu} and equation \eqref{Bianchi}
from Proposition \ref{prop:curv} we see that
\begin{equation} 
\label{eq:curvalp1}
\begin{split}
\curv(\alpha) &= \curv(\mu) + \sum_{m \geq 2} \frac{1}{m!} h^{i}_{n} (\pa + d) 
\{ \al, \dots,  \al\}_m \,\\
& = h^{i}_{n} (\pa + d) \curv(\alpha)\\
& = -\sum_{m \geq 1} \frac{1}{m!} h^{i}_{n} \{ \al, \dots,
\al, \curv(\alpha)\}_{m+1} \,.
\end{split}
\end{equation}
Therefore
$$
\curv(\alpha)  \in \cF_q L \hotimes \Om_n
$$
for all $q \ge 1$ and hence $\al$ is indeed a MC element of $L \hotimes \Om_n$\,.

Let us now prove that every MC element $\al$ of  $L \hotimes \Om_n$ is 
determined uniquely by the pair  $(\mu, \nu) \in \MC(L)  \times \Stub^i_n(L)$\,, 
where 
$$
\mu = \ve^i_n(\al)\,, \qquad \textrm{and} \qquad \nu = (\pa + d) \circ h^i_n (\alpha)\,.
$$

Indeed, if $\beta$ is another MC element of $L \hotimes \Om_n$ such that
\[
\ve^{i}_n \alpha = \ve^{i}_n \beta, \qquad \textrm{and} \qquad
(\pa + d) \circ h^i_n \alpha = (\pa + d) \circ h^i_n \beta
\]
then, using \eqref{eq:poincare}, we see that
\[
\alpha -\beta = - \sum_{m \geq 2} \frac{1}{m!} h^{i}_n \bigl( \{
\al, \dots,  \al\}_m -\{ \beta, \dots,  \beta\}_m \bigr)
\]
which means that 
$$
\alpha -\beta \in \cF_q L \hotimes \Om_n
$$
for all $q \ge 1$ and hence $\al = \beta$\,.

Combining this observation with the above consideration of the limiting 
element of sequence \eqref{Ezra-recursion}, we get the following 
version\footnote{In the definition of the corresponding subspace $\Stub^i_n(L)$ \eqref{stub}
in paper \cite{Ezra-infty}, the condition $\pa(\xi)\big|_{t_i =1} = 0$ is probably omitted by mistake. 
We are sure that this condition is unavoidable.} 
of \cite[Lemma 4.6]{Ezra-infty}:
\begin{lem}
\label{lem:Ezra}
Let $n \ge 1$ and $0\le i \le n$. 
For every filtered $\sLie$-algebra $L$ the assignment 
\begin{equation}
\label{to-stub}
\al \mapsto \big( \varepsilon^{n}_{i}\al  ~, ~(\pa + d) \circ h^i_n(\al)  \big)
\end{equation}
is a bijection of sets 
$$
\MC(L  \hotimes \Om_n )  \cong \MC(L)  \times \Stub^i_n(L)\,.
$$
The inverse of \eqref{to-stub} assigns to a pair 
$(\mu, \nu) \in \MC(L)  \times \Stub^i_n(L)$ the limiting element 
of sequence  \eqref{Ezra-recursion}. \qed
\end{lem}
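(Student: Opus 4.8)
The plan is to show that \eqref{to-stub} and the assignment sending a pair $(\mu,\nu)$ to the limit of the recursion \eqref{Ezra-recursion} are mutually inverse bijections; since essentially all of the analytic content has already been assembled in the discussion preceding the statement, the work is mainly one of organization. First I would check that \eqref{to-stub} is well defined. For the second slot this is \eqref{diff-h-al}: for \emph{any} degree-$0$ element of $L\hotimes\Om_n$, and in particular for a MC element $\al$, the explicit formula \eqref{eq:ho_op2} together with the definition \eqref{stub} gives $(\pa+d)\circ h^i_n(\al)\in\Stub^i_n(L)$. For the first slot I would observe that $\ve^i_n\maps\Om_n\to\bbk$ is a morphism of dg commutative algebras (evaluation at the vertex $\vec e_i$), so $\ve^i_n\otimes\id_L$ is a strict, filtration-preserving $\infty$-morphism $L\hotimes\Om_n\to L$ of filtered $\sLie$-algebras, and such morphisms send MC elements to MC elements; hence $\ve^i_n\al\in\MC(L)$.

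Next I would construct the inverse. Given $(\mu,\nu)\in\MC(L)\times\Stub^i_n(L)$, the recursion \eqref{Ezra-recursion} is arranged so that compatibility of the multi-brackets with the filtration forces $\al^{(k+1)}-\al^{(k)}\in\cF_{k+2}L\hotimes\Om_n$, whence completeness of $L$ yields convergence to an element $\al$ satisfying the fixed-point identity \eqref{eq:ezra_limit}. The crucial claim is that this $\al$ is a MC element, which I would obtain exactly as in \eqref{eq:curvalp1}: applying $\curv$ to \eqref{eq:ezra_limit}, using that $h^i_n$ is a contraction onto $\ve^i_n$ via \eqref{eq:poincare}, that $\ve^i_n\al=\mu$ solves the MC equation of $L$, and the Bianchi identity \eqref{Bianchi}, one arrives at $\curv(\al)=-\sum_{m\ge1}\frac1{m!}h^i_n\{\al,\dots,\al,\curv(\al)\}_{m+1}$; substituting this into itself and invoking filtration-compatibility shows $\curv(\al)\in\cF_q L\hotimes\Om_n$ for every $q$, hence $\curv(\al)=0$.

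Then I would verify that the two maps are mutually inverse. Starting from $(\mu,\nu)$ and the resulting $\al$ of \eqref{eq:ezra_limit}: applying $\ve^i_n$ annihilates every $h^i_n$-term, since $\ve^i_n$ kills forms of positive degree and the condition $\pa(\xi)\big|_{t_i=1}=0$ in \eqref{stub} disposes of the remaining $0$-form part — this is precisely \eqref{back-to-mu} — so $\ve^i_n\al=\mu$; and applying $(\pa+d)\circ h^i_n$, using $h^i_n\circ h^i_n=0$ from \eqref{h-in-square} and \eqref{eq:poincare}, gives $(\pa+d)\circ h^i_n(\al)=(\pa+d)\circ h^i_n(\mu+\nu)=\nu$, which is \eqref{back-to-nu}. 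For the other composite, given a MC element $\al$ of $L\hotimes\Om_n$ I would set $\mu=\ve^i_n\al$, $\nu=(\pa+d)\circ h^i_n\al$, form the limit $\al_0$ of \eqref{Ezra-recursion} built from $(\mu,\nu)$, and show $\al_0=\al$ by uniqueness: both are MC elements with the same image under \eqref{to-stub}, so \eqref{eq:poincare} yields $\al-\al_0=-\sum_{m\ge2}\frac1{m!}h^i_n\bigl(\{\al,\dots,\al\}_m-\{\al_0,\dots,\al_0\}_m\bigr)$, and filtration-compatibility forces $\al-\al_0\in\cF_q L\hotimes\Om_n$ for all $q$, hence $\al=\al_0$. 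The same computation simultaneously proves injectivity of \eqref{to-stub}.

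The step I expect to be the main obstacle is the precise bookkeeping behind \eqref{back-to-mu}: this is exactly where the boundary condition $\pa(\xi)\big|_{t_i=1}=0$ in the definition \eqref{stub} of $\Stub^i_n(L)$ is indispensable, and (as the footnote to the statement emphasizes) getting this condition right is what distinguishes the present formulation from \cite[Lemma 4.6]{Ezra-infty}. Everything else is a convergence-by-filtration argument of the kind used repeatedly throughout the paper.
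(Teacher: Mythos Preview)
Your proposal is correct and follows essentially the same route as the paper: construct the limit of the recursion \eqref{Ezra-recursion}, verify it is MC via the Bianchi identity and filtration bootstrap \eqref{eq:curvalp1}, check \eqref{back-to-mu} and \eqref{back-to-nu}, and prove uniqueness by the difference-in-filtration argument. One small imprecision: the reason $\ve^i_n$ annihilates the $h^i_n$-terms is not that they have positive form degree (they can have $0$-form components), but rather that the explicit formula \eqref{eq:ho_op2} carries the factor $(t_{k_j}-\delta_{ik_j})$, which vanishes at the vertex $\vec e_i$; the paper's \eqref{back-to-mu} relies on exactly this together with the $\Stub$ condition you correctly flagged.
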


\section{Every $1$-cell can be ``rectified''} 
\label{app:rectify}
Let $L$ be a filtered $\sLie$-algebra and $\mMC_{\bul}(L)$ be the 
corresponding DGH $\infty$-groupoid. 
In some cases, it is convenient to deal with $1$-cells 
in  $\mMC_{\bul}(L)$ of the form  
\begin{equation}
\label{rectified}
\beta = \beta_0(t_0) + d t_0  \, \beta_1\,,  
\end{equation}
where $\beta_1$ is a vector in $L^{-1} \subset L^{-1}  \hotimes \bbk[t_0]$, i.e. the 
component $\beta_1$ does not ``depend'' on $t_0$.

We call such $1$-cells {\it rectified}\footnote{Note that rectified $1$-cells are 
precisely $1$-cells in the Kan complex $\ga_{\bul}(L)$ introduced in \cite[Section 5]{Ezra-infty}.}.  

\begin{remark}
\label{rem:rectified}
It is not hard to see that under the bijection established 
in Lemma \ref{lem:Ezra} rectified $1$-cells \eqref{rectified} in  $\mMC_{\bul}(L)$
correspond to pairs $(\mu, \nu)\in \MC(L) \times \Stub^1_1(L)$ where 
$$
\mu =  \beta_0(0) \qquad \textrm{and} \qquad
\nu = (\pa + d) (t_0 \beta_1)\,.
$$
\end{remark}

In this appendix we prove the following lemma: 
\begin{lem}
\label{lem:rectify}
Let $L$ be a filtered $\sLie$-algebra $\al = \al_0(t) + d t  \, \al_1(t) $
be a $1$-cell of $\mMC_{\bul}(L)$ such that $\al_1(t) \in \cF_k L^{-1} \hotimes \bbk[t]$
for some $k \ge 1$. Then there exists $1$-cell of $\mMC_{\bul}(L)$
\begin{equation}
\label{beta-cell-constant}
\beta = \beta_0(t) + d t  \, \beta_1 
\end{equation}
with $\beta_1 \in \cF_k L^{-1}$ such that 
\begin{equation}
\label{ends-OK}
\beta_0(0) = \al_0(0) \qquad \textrm{and} \qquad 
\beta_0(1) = \al_0(1)\,.
\end{equation}
\end{lem}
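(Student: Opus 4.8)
This is the filtered, $1$-dimensional shadow of the fact that the sub-Kan-complex $\ga_{\bul}(L)$ of rectified simplices is a deformation retract of $\mMC_{\bul}(L)$ (cf.\ \cite[Section~5]{Ezra-infty}); I would prove the statement directly, by a convergent approximation in the filtration. First I would twist $L$ by the MC element $\al_0(0)$ and use Lemma~\ref{lem:zero-base-pt} to reduce to the case $\al_0(0)=0$: twisting only modifies the brackets by $\al_0(0)$-dependent terms, respects the filtration, and leaves the component $\al_1(t)\in\cF_k L^{-1}$ unchanged. Integrating equations \eqref{curv-beta-0}, \eqref{diff-beta-0} for $\al$ and using $\al_1(t)\in\cF_k L$ gives $\al_0(1)\equiv\pa\!\int_0^1\al_1(t)\,dt\pmod{\cF_{k+1}L}$, so the rectified $1$-cell $\beta^{(k)}$ defined by $\beta^{(k)}_1:=\int_0^1\al_1(t)\,dt\in\cF_k L^{-1}$, $\beta^{(k)}_0(0)=0$, and the flow equation $\tfrac{d}{dt}\beta^{(k)}_0=\pa^{\beta^{(k)}_0}\beta^{(k)}_1$ already satisfies $\beta^{(k)}_0(1)-\al_0(1)\in\cF_{k+1}L$.

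The main body of the argument is an induction producing rectified $1$-cells $\beta^{(m)}$, $m\ge k$, with $\beta^{(m)}_1\in\cF_k L^{-1}$, $\beta^{(m)}_0(0)=0$, $\beta^{(m)}_0(1)-\al_0(1)\in\cF_{m+1}L$, and $\beta^{(m+1)}-\beta^{(m)}\in\cF_m L\hotimes\Om_1$, together with an auxiliary $1$-cell $\al^{(m)}$ of $\mMC_{\bul}(L)$ running from $0$ to $\al_0(1)$ with $\al^{(m)}-\beta^{(m)}\in\cF_{m+1}L\hotimes\Om_1$ and $\al^{(m+1)}-\al^{(m)}\in\cF_{m+1}L\hotimes\Om_1$. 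Then $\beta:=\lim_m\beta^{(m)}$ is rectified, has $\beta_1\in\cF_k L^{-1}$ and $\beta_0(0)=0$, and $\beta_0(1)=\lim_m\beta^{(m)}_0(1)=\al_0(1)$; passing back through $\Shift_{\al_0(0)}$ then finishes. In the inductive step, reducing $\al^{(m)}-\beta^{(m)}$ modulo $\cF_{m+2}L$ turns it into a $1$-cell of the \emph{abelian} DGH groupoid $\mMC_{\bul}\bigl(\cF_{m+1}L/\cF_{m+2}L\bigr)$ from $0$ to $\overline{\al_0(1)-\beta^{(m)}_0(1)}$; by \eqref{MC-easy} the very existence of such a $1$-cell forces $\overline{\al_0(1)-\beta^{(m)}_0(1)}=\pa\,\overline{\int_0^1(\al^{(m)}-\beta^{(m)})_1(t)\,dt}$, so putting $\gamma_m:=\int_0^1(\al^{(m)}-\beta^{(m)})_1(t)\,dt\in\cF_{m+1}L^{-1}$, $\beta^{(m+1)}_1:=\beta^{(m)}_1+\gamma_m$, and re-solving the flow equation with $\beta^{(m+1)}_0(0)=0$, a successive-approximation estimate for \eqref{diff-beta-0} gives $\beta^{(m+1)}_0(1)\equiv\beta^{(m)}_0(1)+\pa\gamma_m\equiv\al_0(1)\pmod{\cF_{m+2}L}$, as required.

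It remains to build the next witness $\al^{(m+1)}$: a $1$-cell of $\mMC_{\bul}(L)$ from $0$ to $\al_0(1)$ agreeing with $\beta^{(m+1)}$ modulo $\cF_{m+2}L\hotimes\Om_1$ and with $\al^{(m)}$ modulo $\cF_{m+1}L\hotimes\Om_1$. Here I would \emph{solve} the Maurer--Cartan equation in $L\hotimes\Om_1$ with the endpoints $0$ and $\al_0(1)$ prescribed — that is, use the inverse of the bijection of Lemma~\ref{lem:Ezra} for $n=1$ — choosing the stub $\nu$ of $\al^{(m+1)}$ so that it matches the stub of $\beta^{(m+1)}$ modulo $\cF_{m+2}$ and the stub of $\al^{(m)}$ modulo $\cF_{m+1}$; the extra room needed to keep the prescribed endpoints exactly equal to $0,\al_0(1)$ (rather than merely congruent to them) comes from inserting into the potential of $\nu$ a ``gauge'' term built out of $\gamma_m$, exactly as the term $(t_2\,dt_0-t_0\,dt_2)\ti{\ga}$ is inserted in \eqref{ti-nu}. \textbf{I expect this last point to be the main obstacle:} naively concatenating $1$-cells keeps the relevant $dt$-component only in $\cF_k$, and the discrepancy $\al_0(1)-\beta^{(m+1)}_0(1)$, although a cocycle modulo $\cF_{m+3}L$, need not be a coboundary there for any cheap reason — the gain of one filtration degree has to be engineered into the potential $\nu$ of the Maurer--Cartan solve, reproducing on the filtered level the computation carried out at the end of the proof of Proposition~\ref{prop:filling-horn}. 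Everything else — completeness/convergence of the construction, the elementary estimates for the flow equation \eqref{diff-beta-0}, and the fact that the operators $h^i_n$ annihilate constant $0$-forms (which keeps all the correction terms of the recursion \eqref{Ezra-recursion} inside the appropriate filtration layer) — is routine.
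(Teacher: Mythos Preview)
Your approach is genuinely different from the paper's: you try to stay in dimension~$1$, building rectified approximations $\beta^{(m)}$ whose right endpoints converge to $\al_0(1)$, whereas the paper works in dimension~$2$ from the outset. It constructs a convergent sequence of $2$-cells $\ga^m\in\MC(L\hotimes\Om_2)$ whose $0$-th face is \emph{always} the original $\al$, whose $2$-nd face is a rectified $\beta^m$, and whose $1$-st face has $dt$-component pushed into $\cF_m$; in the limit that third face becomes degenerate, forcing $\beta_0(1)=\al_0(1)$. Because $\al$ sits on one face of every $\ga^m$, the exact target $\al_0(1)$ is never lost and no separate ``witness'' is needed.

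The gap in your sketch is precisely at the point you flag. To pass from $\beta^{(m)}$ to $\beta^{(m+1)}$ you must know that $\al_0(1)-\beta^{(m)}_0(1)$ is a \emph{coboundary} in $\cF_{m+1}L/\cF_{m+2}L$; your mechanism for this is the auxiliary $1$-cell $\al^{(m)}$ with exact endpoints $0,\al_0(1)$ and $\al^{(m)}-\beta^{(m)}\in\cF_{m+1}L\hotimes\Om_1$. But constructing $\al^{(m+1)}$ with the same exact endpoints and $\al^{(m+1)}-\beta^{(m+1)}\in\cF_{m+2}L\hotimes\Om_1$ cannot be done with Lemma~\ref{lem:Ezra} for $n=1$: that bijection lets you prescribe \emph{one} vertex and the stub, and the second endpoint is then \emph{determined}. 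Perturbing the stub by something in $\cF_{m+1}$ moves the free endpoint by (to leading order) a coboundary $\pa v$; it cannot hit an arbitrary cocycle. The gauge term $(t_2\,dt_0-t_0\,dt_2)\ti{\ga}$ you cite from Proposition~\ref{prop:filling-horn} is intrinsically $2$-dimensional (it involves a degree~$-2$ element tensored with a $1$-form on $\Delta^2$) and has no analogue in $L\hotimes\Om_1$. Even the base case is problematic: with $\al^{(k)}:=\al$ you only get $\al-\beta^{(k)}\in\cF_kL\hotimes\Om_1$, not $\cF_{k+1}$, so the induction does not start. In short, any honest resolution of your ``main obstacle'' forces you up to $\Om_2$, at which point you are essentially carrying out the paper's construction. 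The initial twist by $\al_0(0)$ is a pleasant simplification the paper does not make, but it does not help with this issue.
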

\begin{proof}
We will prove by induction that there exists a sequence of 
MC elements  $(m \ge k)$
\begin{equation}
\label{ga-m}
\ga^m  \in \MC(L \hotimes \Om_2 ) 
\end{equation}
such that 
\begin{equation}
\label{0-face}
\ga^m \big |_{t_0 = 0} =   \al_0(t_2) + d t_2  \, \al_1(t_2)\,,
\end{equation}
the $1$-cell 
\begin{equation}
\label{2-face}
\beta^m = \beta^m_0(t_0) + d t_0 \beta^m_1 : =  \ga^m \big |_{t_2 = 0} 
\end{equation}
is rectified and\footnote{For $m=k$ we will actually have 
$\beta^k_1 \equiv 0 $\,.}
\begin{equation}
\label{beta-1-m-OK}
\beta^m_1 \in \cF_{k} L\,, \qquad \forall ~~ m \ge k\,.
\end{equation}
Finally the $1$-cell 
\begin{equation}
\label{sigma-m}
\si^m_0(t_2) + d t_2\, \si^m_1(t_2) : =  \ga^m \big|_{t_1 = 0}  
\end{equation}
satisfies the condition
\begin{equation}
\label{sigma-deeper}
\si^m_1(t_2) \in  \cF_{m} L \hotimes \bbk[t_2]
\end{equation}
and, for every $m \ge k$, we have 
\begin{equation}
\label{deeper}
\ga^{m+1} - \ga^m \in  \cF_{m} L \hotimes  \Om_2\,.
\end{equation}

The links between the $2$-cell $\ga^m$ and $1$-cells 
$ \al_0(t_2) + d t_2  \, \al_1(t_2)$, $\beta^m_0(t_0) + d t_0 \, \beta^m_1 $, 
$\si^m_0(t_2) + d t_2\, \si^m_1(t_2)$ are presented graphically on figure 
\ref{fig:ga-cell}. 
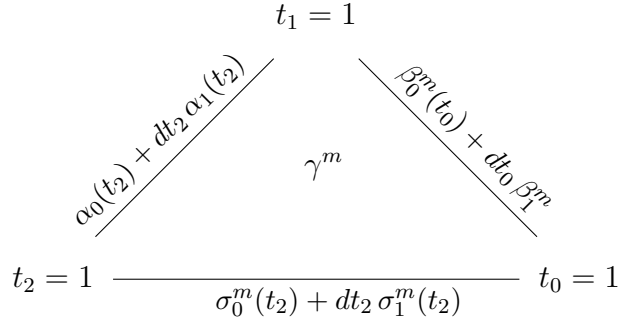
\begin{figure}[htp]
\centering 
\begin{tikzpicture}[scale=0.5]
\tikzstyle{vert}=[circle, minimum size=3, inner sep=5]
\node[vert] (v1) at (0, 7) {{$ t_1 = 1$}};
\node[vert] (v0) at (7, 0) {{$ t_0 = 1$}};
\node[vert] (v2) at (-7, 0) {{$ t_2 = 1$}};
\draw (0.2,3) node[anchor=center] {{$\ga^m$}};
\draw (v1) edge (v0);
\draw (4.2,3.7) node[anchor=center, rotate = -45] {{\small $\beta^m_0(t_0) + d t_0 \, \beta^m_1 $}};
\draw (v1) edge (v2);
\draw (-4.2,3.7) node[anchor=center, rotate = 45] {{\small $ \al_0(t_2) + d t_2  \, \al_1(t_2)$}};
\draw (v2) edge (v0);
\draw (0.6,-0.6) node[anchor=center] {{\small $\si^m_0(t_2) + d t_2\, \si^m_1(t_2)$}};
\end{tikzpicture}
\caption{Links between different cells} 
\label{fig:ga-cell}
\end{figure}

The $1$-cell $\al$ gives us the following MC element 
\begin{equation}
\label{ga-k}
\ga^k : = \al_0(t_2) + d t_2  \, \al_1(t_2) 
\end{equation}
in $L \hotimes \Om_2$ which satisfies all the 
above conditions. So this is the base of our induction. 

Let us assume that we constructed MC elements $\ga^q$ satisfying 
the above properties for all $k \le q \le m$. Now we will use $\ga^m$ to 
construct $\ga^{m+1}$.

Let us denote by $\rho^{(0)}$ the following degree $0$ element 
in $L \hotimes \Om_2$: 
\begin{equation}
\label{rho-0}
\rho^{(0)} : = \al_0(0) + (\pa + d) \circ h^1_2 (\ga^m)\,.
\end{equation}

According to Lemma \ref{lem:Ezra}, $\ga^m$ is the limiting element of the 
convergent sequence $\{ \rho^{(p)}\}_{p \ge 0}$ defined inductively by 
\begin{equation}
\label{rho-p-p1}
\rho^{(p+1)} : = \rho^{(0)} - \sum_{l=2}^{\infty} \frac{1}{l!} h^1_2 \, \{\rho^{(p)}, \dots ,\rho^{(p)}\}_l \,.
\end{equation}

Let $\{\ti{\rho}^{(p)}\}_{p \ge 0}$ be the following sequence 
of degree $0$ elements in $L \hotimes \Om_2$:
\begin{equation}
\label{ti-rho-0}
\ti{\rho}^{(0)} : = \al_0(0) + (\pa + d) \big( \D + h^1_2 (\ga^m) \big)\,,
\end{equation}
\begin{equation}
\label{ti-rho-p-p1}
\ti{\rho}^{(p+1)} : = \ti{\rho}^{(0)} - \sum_{l=2}^{\infty} \frac{1}{l!} h^1_2 \, \{\ti{\rho}^{(p)}, \dots , \ti{\rho}^{(p)}\}_l \,,
\end{equation}
where $\D$ is a degree $-1$ element in $ \cF_m L^{-1} \otimes \bbk[t_0, t_2] $
of the form 
\begin{equation}
\label{Delta}
\D  : =  \sum_{s = 0}^{N} \D_s t_0 t_2^s \,.
\end{equation}

Let us denote by $\ga^{m+1}$ the limiting element of the 
sequence  $\{\ti{\rho}^{(p)}\}_{p \ge 0}$ and show that, for 
an appropriate choice of coefficients $\D_s$ in \eqref{Delta}
the element $\ga^{m+1}$ satisfies all the desired properties.  

Since $(\pa + d) \D$ disappears on the $0$-th face,
$$
\ga^{m+1} \big|_{t_0= 0} = \ga^{m} \big|_{t_0= 0} =  \al_0(t_2) + d t_2  \, \al_1(t_2)\,.
$$ 

We also have 
$$
(\pa + d)\D \big|_{t_2 = 0} ~ = ~ (\pa + d) (\D_0 t_0)\,.
$$
Hence, due to Remark \ref{rem:rectified}, the restriction of 
$\ga^{m+1}$ to the second face is a rectified $1$-cell 
$$
\beta^{m+1}_0(t_0) + d t_0 \beta^{m+1}_1\,.   
$$
Inclusion $\beta^{m+1}_1  \in \cF_k L$ holds because 
$\D_0 \in  \cF_m L$ with $m \ge k$\,. 

Since $\D  \in \cF_m L \otimes  \bbk[t_0, t_2]$, condition \eqref{deeper} is 
obviously satisfied. Moreover, 
\begin{equation}
\label{ga-m1-ga-m}
\ga^{m+1} - \ga^m - (\pa + d) \, \D \in   \cF_{m+1} L \hotimes  \Om_2 \,.
\end{equation}

Hence for 
$$
\si^{m+1} = \si^{m+1}_0(t_2) + d t_2\, \si^{m+1}_1(t_2) : =  \ga^{m+1} \big|_{t_1 = 0}  
$$
we have\footnote{Observe that when $t_1=0$, $t_0=1-t_2$.} 
\begin{equation}
\label{si-m1-si-m}
\si^{m+1}_1(t_2) - \si^{m}_1(t_2) - \frac{\pa}{\pa t_2} \sum_{s = 0}^{N} \D_s (1-t_2) t_2^s  ~\in~    \cF_{m+1} L \hotimes \bbk[t_2] \,.
\end{equation}

Since  $\si^m_1(t_2) \in   \cF_{m} L \hotimes \bbk[t_2]$, we have 
\begin{equation}
\label{si-1-expansion}
\si^m_1(t_2) = \sum_{s = 0}^{N} \si^m_{1, s} \, t_2^s + \dots\,,
\end{equation}
where each $ \si^m_{1, s} \in  \cF_{m} L$ and $\dots$ denotes terms in 
$\cF_{m+1} L \hotimes \bbk[t_2]$\,.

Therefore,  using inclusion \eqref{si-m1-si-m} and obvious identity 
\begin{equation}
\label{si-si-si-Del}
\frac{\pa}{\pa t_2} \sum_{s = 0}^{N} \D_s (1-t_2) t_2^s = 
 \sum_{s=0}^N (s+1) (\D_{s+1} - \D_s) t^s_2
\end{equation}
with $\D_{N+1} = 0$, we conclude that 
$$
\si^{m+1}_1(t_2) \in \cF_{m+1} L \hotimes \bbk[t_2]
$$ 
provided the coefficients $\{ \D_s \}_{0 \le s \le N}$ satisfy the linear equations: 
\begin{equation}
\label{lin-system-Del}
\D_s- \D_{s+1} = \frac{ \si^m_{1, s}}{s+1}\,, \qquad 0 \le s \le N\,, 
\qquad \D_{N+1}=0\,.
\end{equation}

Since linear system \eqref{lin-system-Del} has the obvious solution
\begin{equation}
\label{solution-Del}
\D_{N} = \frac{ \si^m_{1, N}}{N+1}\,,  \qquad \textrm{and} \qquad
\D_s = \frac{ \si^m_{1, s}}{s+1} + \D_{s+1}\,, \qquad \forall ~~ 0 \le s \le N-1
\end{equation}
we conclude that the desired MC element $\ga^{m+1}$ can be constructed. 

Let us now show that the existence of sequence \eqref{ga-m} satisfying all the 
above conditions implies the lemma.

Indeed, due to inclusions \eqref{sigma-deeper} and \eqref{deeper} the sequence $\{\ga_m\}_{m \ge k}$ 
converges to a MC element 
$$
\ga \in L \hotimes \Om_2 
$$
for which the $1$-cell
$$
\si_0(t_2) + d t_2 \si_1(t_2) : = \ga \big|_{t_1 = 0} 
$$
has the property $\si_1(t_2) \equiv 0$\,.

Therefore, we have 
$$
\al_0(1) = \ga \big|_{t_2 = 1} =  \ga \big|_{t_0 = 1} = \beta_0(1)\,.
$$

Thus, since $\beta_0(0) = \al_0(0)$ by construction, Lemma \ref{lem:rectify} 
follows. 
\end{proof}

\begin{remark}
\label{rem:from-Ezra}
Lemma \ref{lem:rectify} can be deduced from \cite[Corollary 5.11]{Ezra-infty}
using twisting by the MC element $\al_0$ and some basic facts proved in 
\cite[Section VI]{Goerss-Jardine}. However, we decided to give a 
proof which bypasses the introduction of an additional  
Kan complex $\ga_{\bul}(L)$ from  \cite[Section 5]{Ezra-infty}. 
\end{remark}

\section{Concatenating infinitely many 1-cells}
\label{app:con-ing}

Let $L$ be a filtered $\sLie$-algebra and  
\begin{equation}
\label{app-rho-n-seq}
\big\{   \rho^{(n)}  = \rho^{(n)}_0(t_0) + d t_0  \rho^{(n)}_1(t_0)   \big\}_{n \geq 1}
\end{equation}
be a sequence of $1$-cells in $\mMC_{\bul}(L)$ satisfying the following conditions: 
\begin{equation}
\label{app-rho-n-1}
\rho^{(n)}_1(t_0) \in  \cF_{n} L \hotimes \bbk[t_0] \,, 
\end{equation}
and
\begin{equation}
\label{tails-match}
\rho^{(n)}_0(1) = \rho^{(n+1)}_0(0)\,, \qquad \forall~~n \ge 1\,. 
\end{equation}

The following lemma implies that there exists a $1$-cell
in  $\mMC_{\bul}(L)$ which may be viewed as the result of 
concatenating the sequence of $1$-cells \eqref{app-rho-n-seq}. 

\begin{lem}
\label{lem:con-ing}
Under the above conditions on $1$-cells \eqref{app-rho-n-seq}, 
one can construct a sequence of $1$-cells 
\begin{equation}
\label{ga-n-seq}
\big\{ \ga^{(n)} =  \ga^{(n)}_0(t_0) + d t_0  \ga^{(n)}_1 (t_0) \big\}_{n \ge 1}
\end{equation}
in $\mMC_{\bul}(L)$ such that 
\begin{equation}
\label{ga-n-ends}
\ga^{(n)}_0(0) =  \rho^{(1)}_0(0)\,, \qquad 
\ga^{(n)}_0(1) =  \rho^{(n)}_0(1)   \,, 
\end{equation}
and 
\begin{equation}
\label{ga-n-ga-n1}
\ga^{(n+1)} -  \ga^{(n)} \in \cF_{n+1} L \hotimes \Om_1\,. 
\end{equation}
\end{lem}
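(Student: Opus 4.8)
The plan is to build the sequence $\{\ga^{(n)}\}_{n\ge1}$ by induction, with $\ga^{(n)}$ serving as a ``partial concatenation'' of $\rho^{(1)},\rho^{(2)},\dots,\rho^{(n)}$. I would set $\ga^{(1)}:=\rho^{(1)}$ and, assuming $\ga^{(n)}$ has been constructed as a $1$-cell of $\mMC_{\bul}(L)$ with $\ga^{(n)}_0(0)=\rho^{(1)}_0(0)$ and $\ga^{(n)}_0(1)=\rho^{(n)}_0(1)$, produce $\ga^{(n+1)}$ by ``composing'' $\ga^{(n)}$ with $\rho^{(n+1)}$ inside the Kan complex $\mMC_{\bul}(L)$. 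Concretely, since $\ga^{(n)}_0(1)=\rho^{(n)}_0(1)=\rho^{(n+1)}_0(0)$ by \eqref{tails-match}, the two $1$-cells $\ga^{(n)}$ and $\rho^{(n+1)}$ span a $2$-dimensional horn in $\mMC_{\bul}(L)$ of exactly the type treated in the proof of Proposition \ref{prop:filling-horn} (with the common vertex $\rho^{(n)}_0(1)$ placed at $t_1=1$, $\ga^{(n)}$ and $\rho^{(n+1)}$ on the two faces meeting there). I would fill this horn by a MC element $\eta^{(n)}\in\mMC_2(L)$ and take $\ga^{(n+1)}$ to be its remaining face $\eta^{(n)}\big|_{t_1=0}$ (up to the obvious reparametrization), which is then a $1$-cell from $\rho^{(1)}_0(0)$ to $\rho^{(n+1)}_0(1)$; this gives \eqref{ga-n-ends}.

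The point is that the horn must be filled \emph{explicitly}, so that the filtration estimate \eqref{ga-n-ga-n1} can be controlled. For this I would follow the recipe of Proposition \ref{prop:filling-horn} (and of the proof of Lemma \ref{lem:rectify}) literally: take $\eta^{(n)}$ to be the limiting element of the recursion \eqref{Ezra-recursion} with $i=1$, $n=2$, starting from $\mu+\nu$, where $\mu=\rho^{(n)}_0(1)\in\MC(L)$ is the value at the common vertex and $\nu\in\Stub^1_2(L)$ is assembled, as in \eqref{ti-nu}, from the stub $h^1_1$ of the face $\rho^{(n+1)}$, the stub $h^1_1$ of the face $\ga^{(n)}$, and a correction term of the form $(t_2\,dt_0-t_0\,dt_2)\,\ti{\ga}$ inserted to keep $\nu$ inside $\Stub^1_2(L)$. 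By Lemma \ref{lem:Ezra}, $\eta^{(n)}$ is then the unique MC element of $L\hotimes\Om_2$ with $\ve^1_2\eta^{(n)}=\mu$ and $(\pa+d)h^1_2\eta^{(n)}=\nu$, and one checks as in the proof of Proposition \ref{prop:filling-horn} that its two prescribed faces are indeed $\ga^{(n)}$ and $\rho^{(n+1)}$.

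To obtain \eqref{ga-n-ga-n1} I would compare $\eta^{(n)}$ with the degenerate $2$-cell $\theta^{(n)}$ obtained by collapsing $\ga^{(n)}$ onto its endpoint: $\theta^{(n)}$ fills the horn obtained from the one above by replacing the face $\rho^{(n+1)}$ with the \emph{constant} $1$-cell at $\rho^{(n)}_0(1)$, and its face corresponding to $\ga^{(n+1)}$ is $\ga^{(n)}$ itself. Now hypothesis \eqref{app-rho-n-1} (with $n+1$ in place of $n$), the $1$-cell equation $\frac{d}{dt_0}\rho^{(n+1)}_0=\pa^{\rho^{(n+1)}_0}\rho^{(n+1)}_1$, the matching condition \eqref{tails-match}, and compatibility of the brackets with the filtration give
\[
\rho^{(n+1)}_0(t_0)-\rho^{(n)}_0(1)\in\cF_{n+1}L\hotimes\bbk[t_0],
\]
so $\rho^{(n+1)}$ and the constant $1$-cell at $\rho^{(n)}_0(1)$ differ by an element of $\cF_{n+1}L\hotimes\Om_1$. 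Hence the data $(\mu,\nu)$ feeding the recursion \eqref{Ezra-recursion} for $\eta^{(n)}$ and for $\theta^{(n)}$ have the same $\mu$, and their $\nu$'s differ by an element of $\cF_{n+1}L\hotimes\Om_2$; since the brackets and $h^1_2$ respect the filtration and all iterates in \eqref{Ezra-recursion} lie in $\cF_1 L$, the same bootstrapping argument that proves convergence of \eqref{Ezra-recursion} yields $\eta^{(n)}-\theta^{(n)}\in\cF_{n+1}L\hotimes\Om_2$. Restricting to the relevant face then gives $\ga^{(n+1)}-\ga^{(n)}\in\cF_{n+1}L\hotimes\Om_1$, which is \eqref{ga-n-ga-n1}. (Together with completeness of $L$, \eqref{ga-n-ga-n1} shows that $\{\ga^{(n)}\}$ converges, which is how the lemma is applied in Section \ref{sec:injective}, but this is not part of the statement.)

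The main obstacle is the bookkeeping hidden in the last two paragraphs: one must choose the correction term $\ti{\ga}$ so that the element $\nu\in\Stub^1_2(L)$ really produces a $2$-cell with the prescribed faces (the role of the $(t_2\,dt_0-t_0\,dt_2)$-term in \eqref{ti-nu}), one must check that for the degenerate horn the same recipe reproduces $\theta^{(n)}$ on the nose, and one must verify that the passage from $(\mu,\nu)$ to the limit of \eqref{Ezra-recursion} turns a $\cF_{n+1}$-perturbation of the input into a $\cF_{n+1}$-perturbation of the output. Each of these is a minor variant of a computation already carried out in Appendix \ref{app:Ezra-lemma} or in the proof of Proposition \ref{prop:filling-horn}, so I do not expect a genuinely new difficulty, only careful tracking of the simplicial coordinates and of the filtration degrees.
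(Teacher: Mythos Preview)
Your approach is essentially the paper's: set $\ga^{(1)}=\rho^{(1)}$, and at each step fill the horn spanned by $\ga^{(n)}$ and $\rho^{(n+1)}$ via the recursion of Lemma~\ref{lem:Ezra}, then compare the resulting $2$-cell with the degenerate one (your $\theta^{(n)}$, the paper's $\ga^{(n)}_0(1-t_2)-dt_2\,\ga^{(n)}_1(1-t_2)$) to extract the filtration estimate. The one simplification in the paper you should note: no correction term of type $(t_2\,dt_0-t_0\,dt_2)\ti{\ga}$ is needed here---the stub $\nu=(\pa+d)\big(h^0_1(\ga^{(n)})(1-t_2)+h^1_1(\rho^{(n+1)})(t_0)\big)$ already lies in $\Stub^1_2(L)$ and reproduces the prescribed faces, and since $h^1_1(\rho^{(n+1)})\in\cF_{n+1}L\hotimes\bbk[t_0]$ by \eqref{app-rho-n-1}, the comparison with the degenerate cell is immediate. (In Proposition~\ref{prop:filling-horn} that extra term was there to force a filtration condition on the third face, not to land in $\Stub^1_2$; here the analogous condition comes for free.)
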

\begin{proof}
Setting 
$$
\ga^{(1)}  : = \rho^{(1)}
$$
we get the base of the induction on $n$. So let us assume that 
we already constructed the desired $1$-cells 
$$
\ga^{(1)}, \ga^{(2)}, \dots, \ga^{(n)}\,.
$$

Since 
$$
\ga^{(n)}_0(1) = \rho^{n+1}_0(0)\,,
$$
the $1$-cells $\ga^{(n)}$ and $\rho^{(n+1)}$ give us the following horn: 
\begin{equation}
\label{1-dim-horn-L}
\begin{tikzpicture}
\matrix (m) [matrix of math nodes, row sep=3em, column sep=2em]
{~  &  \mathbf{1}  \mapsto \rho^{(n+1)}_0(0) &  ~ \\
\mathbf{2} \mapsto \rho^{(1)}_0(0)  &  ~  &   \mathbf{0} \mapsto  \rho^{(n+1)}_0(1)\\ };
\path[dashed, font=\scriptsize]
(m-1-2) edge  node[above] {$\ga^{(n)}~~~$} (m-2-1)  edge node[auto] {$\rho^{(n+1)}$}  (m-2-3);
\end{tikzpicture}
\end{equation}

Let us prove that there exists a MC element
\begin{equation}
\label{eta}
\eta  \in L \hotimes \Om_2 
\end{equation}
such that 
\begin{equation}
\label{eta-rho}
\eta \Big|_{t_2 = 0}  =  \rho^{(n+1)}_0(t_0) + d t_0  \rho^{(n+1)}_1(t_0) \,,
\end{equation}
\begin{equation}
\label{eta-gamma}
\eta \Big|_{t_0 = 0}  =  \ga^{(n)}_0(t_1) + d t_1  \ga^{(n)}_1(t_1) \,,
\end{equation}
and the $1$-cell 
\begin{equation}
\label{ga-n1}
\ga^{(n+1)}_0(t_0) + d t_0  \ga^{(n+1)}_1(t_0) : = \eta \Big|_{t_1 = 0}
\end{equation}
satisfies the property
\begin{equation}
\label{ga-n1-ga-n}
\ga^{(n+1)}  -   \ga^{(n)}  \in  \cF_{n+1} L \hotimes \Om_1\,.
\end{equation}

To construct $\eta$, we denote by $\ga$ and $\rho$ the degree $-1$ elements 
\begin{equation}
\label{ga}
\ga (t_0) : =  h^0_1 (  \ga^{(n)} ) \in  L \hotimes \bbk[t_0]
\end{equation} 
and 
\begin{equation}
\label{rho}
\rho (t_0) : =  h^1_1 (  \rho^{(n+1)} ) \in  L \hotimes \bbk[t_0]
\end{equation} 
respectively. 

In other words, $(\pa + d) \ga (t_0)$ (resp.  $(\pa + d) \rho (t_0)$)
is the ``stub'' of the $1$-cell  $\ga^{(n)}$ (resp. $\rho^{(n+1)}$) 
in the sense of Lemma \ref{lem:Ezra}.

Next, we consider the following degree $0$ element of $L \hotimes \Om_2$
\begin{equation}
\label{nu}
\nu : = (\pa + d) \big(\ga(1-t_2) +  \rho(t_0)   \big)\,.
\end{equation}
The element $\nu$ belongs to $\Stub_2^1(L)$ since, at vertex $1$, $1- t_2 = 1$ and $t_0=0$\,.

Therefore, according to Lemma \ref{lem:Ezra}, the sequence of 
degree zero elements 
$$
\{\eta^{(k)}\}_{k \ge 0}
$$
defined by
\begin{equation}
\label{eta-0}
\eta^{(0)} : =  \rho^{(n+1)}_0(0)   + \nu
\end{equation}
\begin{equation}
\label{eta-k1-eta-k}
\eta^{(k+1)} : = \eta^{(0)} - \sum_{m=2}^{\infty} \frac{1}{m!} \{\eta^{(k)}, \dots  ,\eta^{(k)} \}_m
\end{equation}
converges to a MC element $\eta \in L \hotimes \Om_2$\,.
 
Let us prove that the limiting element $\eta$ satisfies all the desired properties.

Indeed, \eqref{eta-rho} and \eqref{eta-gamma} follow from Lemma \ref{lem:Ezra} 
and the obvious equations 
$$
\nu \Big|_{t_2 = 0} =  (\pa + d) \big( \rho(t_0) \big)\,, 
\qquad \textrm{and} \qquad  
\nu \Big|_{t_0 = 0} = (\pa + d) \big( \ga(1-t_2) \big) =  (\pa + d) \big( \ga(t_1) \big)\,.
$$

Inclusion \eqref{app-rho-n-1} implies that 
\begin{equation}
\label{nu-approx}
\nu ~ - ~(\pa + d) \big( \ga(1-t_2) \big) \in \cF_{n+1} L \hotimes \Om_2\,.
\end{equation}

On the other hand a direction computation shows that
$$
h^1_2 \big( \ga^{(n)}_0(1-t_2) - d t_2  \ga^{(n)}_1(1 - t_2) \big) = \ga(1-t_2)\,,
$$
where $\ga^{(n)}_0(1-t_2) - d t_2  \ga^{(n)}_1(1 - t_2)$ is viewed as an element of 
$L \hotimes \Om_2$ in the obvious way.

Hence, applying Lemma \ref{lem:Ezra} to MC elements of 
$(L / \cF_{n+1} L) \otimes \Om_2 $, and using \eqref{nu-approx} 
we conclude that 
$$
\eta - \ga^{(n)}_0(1-t_2) + d t_2  \ga^{(n)}_1(1-t_2) \in \cF_{n+1} L \hotimes \Om_2\,.
$$

Thus $1$-cell \eqref{ga-n1} indeed satisfies \eqref{ga-n1-ga-n}. 

Lemma \ref{lem:con-ing} is proved. 
\end{proof}

\pagebreak
~\\

\noindent\textsc{Department of Mathematics,
Temple University, \\
Wachman Hall Rm. 638\\
1805 N. Broad St.,\\
Philadelphia PA, 19122 USA \\
\emph{E-mail address:} {\bf vald@temple.edu} }

~\\

\noindent\textsc{Mathematics Institute\\
Georg-August Universit\"at G\"ottingen, \\
Bunsenstrasse 3-5, D-37073 G\"ottingen, Germany\\
\emph{E-mail address:} {\bf crogers@uni-math.gwdg.de} }

\end{document}